\theoremstyle{plain}
    \newtheorem{thm}{Theorem}[section]
    \newtheorem{lemma}[thm]{Lemma}
    \newtheorem{coro}[thm]{Corollary}
    \newtheorem{prop}[thm]{Proposition}
    \newtheorem*{Thm}{Theorem}
\theoremstyle{definition}
    \newtheorem{defi}[thm]{Definition}
    \newtheorem{defilem}[thm]{Definition/Lemma}
    \newtheorem{ex}[thm]{Example}
\theoremstyle{remark}
    \newtheorem{remark}{Remark}
\newcommand{\suchthat}{\ | \ }
\newcommand{\RQ}[1]{\mathbb{C}\langle\hspace{-0.75mm}\langle #1\rangle\hspace{-0.75mm}\rangle}
\newcommand{\myid}{1\hspace{-0.75mm}1}
\newcommand{\idealM}{\mathfrak{m}}
\newcommand{\jacobalg}[1]{\Lambda(#1)}
\newcommand{\rad}{\operatorname{rad}}
\newcommand{\coker}{\operatorname{coker}}
\newcommand{\image}{\operatorname{im}}
\newcommand{\bbZ}{\mathbb{Z}}
\newcommand{\bbC}{\mathbb{C}}
\newcommand{\Hom}{\operatorname{Hom}}
\newcommand{\ii}{{\bf i}}
\newcommand{\ee}{{\bf e}}
\newcommand{\Gr}{Gr}
\newcommand\restr[2]{{
  \left.\kern-\nulldelimiterspace 
  #1 
  \vphantom{\big|} 
  \right|_{#2} 
  }}
\begin{document}

\title[Landau-Ginzburg potentials via projective representations]{Landau-Ginzburg potentials via projective representations}
\author{Daniel Labardini-Fragoso}
\address{Daniel Labardini-Fragoso\newline
Instituto de Matem\'aticas, UNAM, Mexico, and\newline
Mathematisches Institut, Universit\"at zu K\"oln, Germany}
\email{labardini@im.unam.mx, dlabardi@uni-koeln.de}
\author{Bea de Laporte}
\address{Bea de Laporte\newline
Mathematisches Institut, Universit\"at zu K\"oln, Germany}
\email{bschumann@math.uni-koeln.de}
	\begin{abstract} We interpret the Landau-Ginzburg potentials associated to Gross-Hacking-Keel-Kontsevich's partial compactifications of cluster varieties as $F$-polynomials of projective representations of Jacobian algebras. Along the way, we show that both the finite-dimensional projective and the finite-dimensional injective representations of Jacobian algebras are well-behaved under Derksen-Weyman-Zelevinsky's mutations of representations.
		\end{abstract}
\maketitle

\tableofcontents

\section*{Introduction}

Cluster algebras, invented by Fomin-Zelevinsky in \cite{FZ}, are commutative algebras defined in terms of generators that are produced recursively through an algebraic-combinatorial operation called \emph{mutation}, which in turn also forces \emph{exchange relations} between such generators. Among the many examples of cluster algebras are homogeneous coordinate rings of Grassmannians and (partial) flag varieties. The main aim of Fomin-Zelevinsky's construction was to provide a combinatorial framework for the
study of the so-called canonical bases, which the mentioned examples admit. 

In \cite{FG} a geometric viewpoint on cluster algebras was taken where Fock-Goncharov attach two schemes to each cluster algebra, the cluster $\mathcal{A}$-variety which is closely related (and often equal) to the spectrum of the cluster algebra, and the cluster $\mathcal{X}$-variety which is in a sense dual to the cluster $\mathcal{A}$-variety. In \cite{GHKK}, under certain hypotheses, a canonical basis of the ring of regular functions on the $\mathcal{A}$-variety was constructed, parametrized by the tropical points of the $\mathcal{X}$-variety. When the cluster algebra has frozen vertices, Gross-Hacking-Keel-Kontsevich consider a partial compactification $\overline{\mathcal{A}}$ of the cluster $\mathcal{A}$-variety and a Landau-Ginzburg potential $W$ on the cluster $\mathcal{X}$-variety whose tropicalization determines the canonical basis of the ring of regular functions on $\overline{\mathcal{A}}$. 

In this way one obtains polyhedral cones defined by the tropicalization of $W$ expressed in certain toric charts and for each of these charts a toric degeneration of $\overline{\mathcal{A}}$. Among these cones appear many famous cones which are connected to the representation theory of algebraic groups, e.g. the Gelfand-Tsetlin cone (\cite{M2}), the Littlewood-Richardson cones (\cite{M}) and the string cones (\cite{BF, GKS20}).

The aim of this paper is give an interpretation of the Landau-Ginzburg potential $W$ in terms of representations of the Jacobian algebras defined on the underlying quivers. 

For each (decorated) representation $\mathcal{M}$ of a Jacobian algebra of $\Lambda$ we may define a polynomial, called the \emph{dual $F$-polynomial of $\mathcal{M}$}, as 
\begin{equation*}
F^{\vee}_{\mathcal{M}}(u_1,\ldots,u_m):=\sum_{\ee \in \mathbb{Z}_{\ge 0}^m} \chi(\Gr^{\ee}(\mathcal{M}))\displaystyle\prod_{i=1}^{m}u_i^{e_i},
\end{equation*}
where $\Gr^{\ee}(\mathcal{M})$ is the quiver Grassmannian of quotients of $\mathcal{M}$ and $\chi$ is the topological Euler-characteristic.

The following is our main result, which we prove under the hypothesis that every frozen variable has an optimized seed (see Definition \ref{potdefi}), and that the underlying quiver admits a non-degenerate potential.

\begin{Thm}[Theorem \ref{potasfpol}] The Landau-Ginzburg potential $W$, expressed in the toric chart to $Q$, is the sum of dual $F$-polynomials with constant terms removed, of the indecomposable projectives representations of a Jacobian algebra defined on $Q$.
\end{Thm}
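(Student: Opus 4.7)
The strategy is to verify the identity $W = \sum_{i \text{ frozen}} (F^{\vee}_{P_i}-1)$ summand by summand. The GHKK potential decomposes into contributions $W_i$ indexed by the frozen vertices (the theta functions attached to the frozen variables), matching the indexing of the right-hand side. The plan is to establish $W_i = F^{\vee}_{P_i} - 1$ for each frozen $i$, first at a seed optimized for $i$ (which exists by hypothesis) and then to propagate the identity to arbitrary seeds by mutation at non-frozen vertices.

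\textbf{Base case at an optimized seed.} At a seed $Q'$ optimized for the frozen vertex $i$, $W_i$ restricted to the toric chart of $Q'$ simplifies drastically: GHKK-type arguments give it as a single monomial in the $u$-variables, with no broken-line corrections. On the representation side, the optimality condition constrains the arrows at $i$ in $Q'$, which forces the indecomposable projective $P_i$ of the Jacobian algebra to have a controlled radical. I expect the only nonzero quotient Grassmannian $\Gr^{\ee}(P_i)$ contributing to $F^{\vee}_{P_i}-1$ to be a point for a single $\ee$, namely the dimension vector of $P_i$, producing precisely the matching monomial.

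\textbf{Propagation by mutation.} From the optimized seed I would reach any seed via mutations at non-frozen vertices. On the potential side, the cluster $\mathcal{X}$-mutation rule for $W$ is explicit (GHKK). On the representation side, I would combine two inputs: the theorem announced in the introduction that finite-dimensional projectives behave well under Derksen-Weyman-Zelevinsky mutation, and Derksen-Weyman-Zelevinsky's transformation rule for $F$-polynomials. Together these yield a mutation formula for $F^{\vee}_{P_i}$; the inductive step is the identification of this formula with the $\mathcal{X}$-mutation of $W_i$.

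\textbf{Main obstacle.} The principal technical hurdle is precisely the reconciliation of these two mutation rules: the GHKK $\mathcal{X}$-mutation involves a piecewise-linear sign governed by an exchange monomial, while the DWZ rule for $F$-polynomials is phrased in terms of $\Hom$-dimensions at the mutated vertex. Matching the two amounts to showing that the relevant dimension count reproduces the tropical sign decomposition, which I expect to rely on the non-degeneracy hypothesis on the potential to keep $P_i$ mutating inside the class of projectives (up to controlled direct-sum decompositions). Once this step is in place, the removal of constant terms is bookkeeping: the $1$ in each $F^{\vee}_{P_i}$ is the contribution of the zero quotient, and $W_i$ has no constant term on any toric chart.
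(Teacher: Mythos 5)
Your overall strategy coincides with the paper's: verify the identity $W_\ell = F^{\vee}_{P(\ell)} - 1$ at an optimized seed (where $\ell$ is a sink, so $P(\ell)$ is the simple at $\ell$ and $F^{\vee}_{P(\ell)}-1 = u_\ell$), then propagate by mutation at non-frozen vertices using the mutation theorem for projectives together with Derksen-Weyman-Zelevinsky's $F$-polynomial transformation rule. That is exactly the paper's skeleton.

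However, your ``main obstacle'' paragraph names the crux but leaves it open, and it is not a formality. The DWZ transformation rule \cite[Lemma 5.2]{DWZ} does \emph{not} say that $F$-polynomials transform by the $y$-seed (equivalently $\mathcal{X}$-) mutation rule on the nose; it carries correction factors of the form $(1+y_k)^{\pm h}$, where the exponents are the $h$-vector entries $h_k$ of the representation and of its mutation at the vertex $k$ being mutated. For the identity $W_\ell\circ\check{\mu}_k^* = W_\ell'$ to translate into $F^\vee_{P(\ell)}\circ\check{\mu}_k^* = F^\vee_{P'(\ell)}$, you must show these correction factors are trivial, i.e.\ that $h_k$ vanishes for all mutable $k\ne\ell$. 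The paper handles this by passing through duality ($F^{\vee}_{P_{(Q,S)}(\ell)} = F_{I_{(Q^{\operatorname{op}},S^{\operatorname{op}})}(\ell)}$), reducing to injectives, and proving $h_k^{(Q,S)}\big(I_{(Q,S)}(\ell),0\big)=0$ (Lemma \ref{lemma:h-vectors-of-injectives}). That lemma in turn rests on the vanishing of the $E$-invariant of $(I(\ell),0)$ \cite[Corollary 10.9]{DWZ} and on the $g$--$h$ relation under mutation; it is not something that falls out of the projective-mutation theorem by itself. Without this input, ``matching the two mutation rules'' has no proof, only a hope.

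A second gap: you work with the projective of ``the Jacobian algebra on $Q$,'' but the correct object is the projective of the Jacobian algebra of the \emph{restricted} quiver $Q^{\ell}$, the full subquiver on the mutable vertices together with $\ell$ alone. These projectives genuinely differ: in Example \ref{ex:SL3} the projective at the frozen vertex $2$ has dimension vector $(1,1,1)$ over $Q$ but $(1,1)$ over $Q^2$, and only the latter matches $W_2$. The paper justifies the restriction via Lemma \ref{subquiver}, which shows $W_\ell$ restricted to a toric chart depends only on $Q^\ell$. Your proof would overcount monomials if run with projectives over the full ice quiver. Finally, a minor point on the base case: at the optimized seed $W_\ell|_{\mathcal{X}_{t_\ell}}=X_\ell(t_\ell)^{-1}$ by the very definition of $W_\ell$ (Definition \ref{potdefi}); no broken-line argument is needed or used.
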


We apply this theorem to the big reduced double Bruhat cells of simple simply-laced algebraic groups $G$ which have (up to codimension two) the structure of a cluster ${\mathcal{A}}$-variety whose partial compactification is the unipotent radical of a Borel of $G$. This leads to a quiver theoretic interpretation of the string cone inequalities. In type $A$ everything is understood explicitly in terms of combinatorics of certain paths in wiring diagrams.

In order to prove Theorem \ref{potasfpol} we study the mutation behavior of indecomposable projective finite-dimensional representations of Jacobian algebras (which are not assumed to be Jacobi-finite). They turn out to be well-behaved as the next theorem shows.

\begin{Thm}[Theorem \ref{thm:mut-of-proj-is-proj}] Suppose $(Q,S)$ is a non-degenerate quiver with potential. Let $k\ne \ell$ be vertices of $Q$ and set $(Q',S')$ to be the quiver with potential obtained by applying the $k^{th}$ mutation of quivers with potential to $(Q,S)$. If $\jacobalg{Q,S}e_{\ell}$ is finite dimensional then the $k^{th}$ mutation of representations, applied to the indecomposable projective $\jacobalg{Q,S}$-module $\jacobalg{Q,S}e_{\ell}$, is isomorphic to the indecomposable projective $\Lambda(Q',S')$-module $\Lambda(Q',S')e_{\ell}$.
\end{Thm}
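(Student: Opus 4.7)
My plan is to carry out an explicit computation using the DWZ triangle at vertex $k$ for the representation $P := \jacobalg{Q,S}e_{\ell}$, regarded as a representation of $(Q,S)$ by putting $P_i = e_i \jacobalg{Q,S} e_{\ell}$ and letting each arrow act by left multiplication. The DWZ mutation of $P$ at $k$ is computed from the triangle
\[
\bigoplus_{a\colon i\to k} P_i \xrightarrow{\ \alpha\ } P_k \xrightarrow{\ \beta\ } \bigoplus_{b\colon k\to j} P_j,\qquad \bigoplus_{b\colon k\to j} P_j \xrightarrow{\ \gamma\ } \bigoplus_{a\colon i\to k} P_i,
\]
where $\alpha, \beta$ are left multiplications by arrows and the matrix entry of $\gamma$ from $P_j$ to $P_i$ is left multiplication by the cyclic derivative $\partial_{[ba]}S$. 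The Jacobian relations $\partial_a S = 0$ and $\partial_b S = 0$ for arrows incident to $k$ give $\gamma\beta = 0$ and $\alpha\gamma = 0$.

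The decisive observation, specific to the projective $P=\jacobalg{Q,S}e_\ell$ with $\ell\ne k$, is that every path from $\ell$ to $k$ must end with an arrow into $k$. Therefore $\alpha$ is surjective, and a short argument using $\partial_a S = \sum_{b}\partial_{ba}S\cdot b$ (in $\jacobalg{Q,S}$) identifies $\ker\alpha$ with $\image\gamma$. With these two simplifications DWZ's formula for the new vector space at $k$ collapses to
\[
\overline{P}_k \;=\; \image\gamma \;\oplus\; \ker\gamma/\image\beta,
\]
the "defect" summand $P_k/(\image\alpha + \ker\beta)$ being zero. The spaces $P_i$ for $i\ne k$ are unchanged, and the maps attached to the arrows of $Q' = \mu_k(Q)$ (the reversed arrows $a^*, b^*$ at $k$ and the composed arrows $[ba]\colon i\to j$) are supplied by the standard DWZ recipe. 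I would then write down a candidate isomorphism $\overline{P}\xrightarrow{\sim}\jacobalg{Q',S'}e_{\ell}$ by matching paths from $\ell$ in $Q'$ according to their first passage through $k$: a basis vector in $\image\gamma \subset \overline{P}_k$ should correspond to an arrow $b^*\colon j \to k$ entering $k$ in $Q'$ prepended to a path not ending in $k$, and a basis vector of $\ker\gamma/\image\beta$ should correspond to a path in $Q'$ that does not visit $k$ near its end; away from $k$, paths in $Q'$ correspond to paths in $Q$ with any $ba$ subword rewritten via the composite arrow $[ba]$.

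The main obstacle is Step 5: verifying that this candidate identification is compatible with the Jacobian relations of $(Q',S')$. Recall that $S' = \mu_k(S)$ is obtained from the premutation $\widetilde{S} = [S] + \sum_{a,b}[ba]\,b^*a^*$ by right-equivalence reduction removing the trivial quadratic part; non-degeneracy of $(Q,S)$ is what guarantees a well-defined reduced output. I would first prove the analogous statement for the (non-reduced) premutation $(\widetilde{Q},\widetilde{S})$: the relations $\partial_{[ba]}\widetilde{S}=0$, $\partial_{a^*}\widetilde{S}=0$, $\partial_{b^*}\widetilde{S}=0$ translate, under the identification above, into the relations $\gamma\beta=0$, $\alpha\gamma=0$ together with the tautological splitting of $\overline{P}_k$, while the remaining $\partial_c \widetilde{S}=0$ are direct translations of $\partial_c S = 0$. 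Then I would show the statement descends through the standard reduction procedure of Derksen--Weyman--Zelevinsky, using that the right-equivalence on quivers with potentials induces a canonical isomorphism of module categories and sends indecomposable projectives to indecomposable projectives at the same vertex. The technical heart is therefore the bookkeeping at the premutation step; once that is in hand, finite-dimensionality of $\jacobalg{Q,S}e_{\ell}$ ensures that all constructions remain within the class of finite-dimensional representations, so no convergence issues arise.
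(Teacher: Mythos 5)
Your overall plan tracks the paper's: form the DWZ triangle at $k$ for $P=\jacobalg{Q,S}e_\ell$, use $k\ne\ell$ to see that $\alpha$ is surjective, argue that $\ker\alpha=\image\gamma$, conclude that the decoration $\overline{v}_k$ vanishes, and then identify the premutation $\widetilde\mu_k(P,0)$ with the projective for $\widetilde\mu_k(Q,S)$ before reducing. However, two points need attention.

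First, and most importantly, the assertion that $\ker\alpha=\image\gamma$ follows from a ``short argument'' using $\partial_{a_i}(S)=\sum_j\partial_{b_ja_i}(S)\,b_j$ hides the technical heart of the proof. That identity gives $\alpha\gamma=0$ readily, but the reverse inclusion $\ker\alpha\subseteq\image\gamma$ does not fall out of it. If $(p_1,\dots,p_s)\in\ker\alpha$ then $\sum_i a_i\tilde p_i\in J(S)$ for lifts $\tilde p_i\in\RQ{Q}$, and $J(S)$ is by definition the $\idealM$-adic \emph{closure} of the two-sided ideal generated by the cyclic derivatives, not merely the ideal itself. One must therefore express $\sum_i a_i\tilde p_i$ as a limit of algebraic combinations $\sum\lambda\,\partial_\omega(S)\,\rho$, strip off the leading $a_i$ from each term (using that any path from $\ell$ to $k$ ends in some $a_i$, and that when $\lambda$ has degree zero the term is $\lambda\partial_{b_j}(S)\rho=\lambda\sum_i a_i\partial_{b_ja_i}(S)\rho$), and then control the passage to the limit using closedness of images of continuous maps and of finitely generated subspaces in the $\idealM$-adic topology. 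This is exactly what the paper's Proposition~\ref{prop:spanning ker a/ker gamma for radical} does, and it occupies the bulk of the proof; it is not short, and finite-dimensionality of $\jacobalg{Q,S}e_\ell$ alone does not let you sidestep it, because the spaces $e_kJ(S)e_\ell\subset\RQ{Q}$ are still infinite-dimensional.

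Second, your displayed formula $\overline P_k=\image\gamma\oplus\ker\gamma/\image\beta$ is not the Derksen--Weyman--Zelevinsky formula: with $v_k=0$ and $\ker\alpha=\image\gamma$ it collapses to $\overline P_k=\coker\beta=M_{\operatorname{out}}/\image\beta$, which is neither $\ker\gamma/\image\beta$ (unless $\gamma=0$) nor contains a copy of $\image\gamma$. Your route to identifying $\overline P$ with $\jacobalg{Q',S'}e_\ell$ via ``matching paths by first passage through $k$'' is also a genuinely different tactic from the paper's: the paper instead exhibits a commutative diagram identifying the $\alpha$--$\beta$--$\gamma$ data of $\widetilde\mu_k(P,0)$ with that of $P_{\widetilde\mu_k(Q,S)}(\ell)$ (Proposition~\ref{prop:alpha-beta-diagrams-for-proj-of-premut-are-isomorphic}, with $\psi_k=-\overline{\widehat{\mathfrak a}}$), so there is no separate Step~5 of verifying the relations of $(Q',S')$ on an ad hoc basis -- that verification is subsumed. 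If you do want to pursue path matching, plan to spend real effort on your Step~5, and note that non-degeneracy is not needed for the single reduction step (the Splitting Theorem applies to any QP); it becomes relevant only if you iterate mutations.
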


In the Jacobi-finite case Theorem \ref{thm:mut-of-proj-is-proj} may also be deduced from \cite{P11} as explained in Remark \ref{Plamondon}. Note that our proof only uses methods from linear algebra.

Using the full power of Derksen-Weyman-Zelevinsky's machinery, the mutation behavior of projective representation allows us to show that the dual $F$-polynomial of an indecomposable projective representation at a frozen vertex is mutation invariant under $y$-seed mutation (see Theorem \ref{profFpol}).

The paper is organized as follows. In the first section we recall notions related to Jacobian algebras. In the second section we study the mutation behavior of projective representations of Jacobian algebras and prove Theorem \ref{thm:mut-of-proj-is-proj}. In the third section we study the mutation behavior of injective representations of Jacobian algebras. The fourth section introduces notions related to cluster algebras. In the fifth section we study the mutation behavior of $F$-polynomials of injective representations of Jacobian algebras and dual $F$-polynomials of projective representations of Jacobian algebras. In the sixth section we introduce cluster varieties, their partial compactification and Landau-Ginzburg potentials and prove Theorem \ref{potasfpol}. The last section deals with the example of big reduced double Bruhat cells and string cones.

\section*{Acknowledgements}
We are indebted to Markus Reineke for several insightful comments and discussions and to Pierre-Guy Plamondon for explaining us the results of \cite{P11}. We would furthermore like to thank Xin Fang for careful proofreading of some parts of this paper. The paper was written during a sabbatical visit of DLF to Sibylle Schroll at the Mathematisches Institut of the Universit\"at zu K\"oln. The great working conditions and the hospitality are gratefully acknowledged.

BL was supported by the SFB/TRR 191 'Symplectic Structures in Geometry, Algebra and Dynamics', funded by the DFG. DLF received support from UNAM's \emph{Dirección General de Asuntos del Personal Académico} through its \emph{Programa de Apoyos para la Superación del Personal Académico}.

\section{Background on quivers with potential and their representations}\label{backgroundQP}

\subsection{Quiver mutations}\label{subsec:quiver-mutations}

Recall that a \emph{quiver} is a finite directed graph, that is, a quadruple $Q=(Q_0,Q_1,h,t)$, where $Q_0$ is a set of
\emph{vertices}, $Q_1$ is the a set of \emph{arrows}, and $h:Q_1\rightarrow Q_0$ and $t:Q_1\rightarrow Q_0$ are the \emph{head}
and \emph{tail} functions. We write $a:i\rightarrow j$ to indicate that $a$ is an arrow of $Q$ with $t(a)=i$, $h(a)=j$. All of our quivers will be \emph{loop-free}, that is, there will never be any arrow $a$ with $t(a)=h(a)$.

A \emph{path of length} $d>0$ in $Q$ is a sequence $a_1a_2\ldots a_d$ of arrows with $t(a_j)=h(a_{j+1})$ for $j=1,\ldots,d-1$. A path
$a_1a_2\ldots a_d$ of length $d>0$ is a $d$\emph{-cycle} if $h(a_1)=t(a_d)$. A quiver is \emph{2-acyclic} if it has no 2-cycles.

Paths are composed as functions, that is, if $a=a_1\cdots a_d$ and $b=b_1\cdots b_{d'}$ are paths with $h(b)=t(a)$, then the concatenation $ab$ is defined as the path $a_1\cdots a_db_1\cdots b_{d'}$, starting at $t(b_{d'})$ and ending at $h(a_1)$. 

For $k\in Q_0$, a $k$\emph{-hook} in $Q$ is a path $ab$ of length 2 such that $a$ and $b$ are arrows such that $t(a)=k=h(b)$.

\begin{defi}\label{def:threesteps} Given a quiver $Q$ and a vertex $k\in Q_0$ such that $Q$ does not have $2$-cycles incident at $k$, we define the
\emph{mutation} of $Q$ with respect to $k$ as the quiver $\mu_k(Q)$ with vertex set $Q_0$ that results after applying the following three-step
procedure to $Q$:
\begin{itemize}
\item[(Step 1)] For each $k$-hook $ab$ introduce an arrow $[ab]:t(b)\rightarrow h(a)$;
\item[(Step 2)] replace each arrow $c$ incident to $k$ with an arrow $c^*$ in the opposite direction;
\item[(Step 3)] choose a maximal collection of disjoint 2-cycles and remove them.
\end{itemize}
The quiver obtained after applying only the first and second steps will be called the \emph{premutation} $\widetilde{\mu}_k(Q)$.
\end{defi}

\subsection{Quivers with potential and their mutations}

Our main reference for this subsection is \cite{DWZ1}.  

Given a quiver $Q$, we denote by $R$ the $\bbC$-vector space with basis $\{e_i\suchthat i\in Q_0\}$. If we define $e_ie_j=\delta_{ij}e_i$,
then $R$ becomes naturally a commutative semisimple $\bbC$-algebra, the \emph{vertex span} of $Q$; each idempotent $e_i$ is the
\emph{path of length zero} at $i$. We define the \emph{arrow span} of $Q$ as the $\bbC$-vector space $A$ with basis $Q_1$.
Notice that $A$ is an $R$-$R$-bimodule under $e_ia:=\delta_{i,h(a)}a$ and $ae_j:=a\delta_{t(a),j}$ for $i,j\in Q_0$ and $a\in Q_1$. For $d\geq 0$
let $A^d$ be the $\bbC$-vector space with basis all the paths of length $d$ in $Q$; it has a natural $R$-bimodule structure too and is isomorphic to the $d$-fold tensor product $A\otimes_R \cdots\otimes_RA$. Observe that $A^0=R$ and $A^1=A$.

\begin{defi} The \emph{complete path algebra} of $Q$ is the $\bbC$-vector space 
\begin{equation}\label{eq:comp-path-alg-def}
\RQ{Q}=\underset{d=0}{\overset{\infty}{\prod}}A^d;
\end{equation}
with multiplication 
\begin{equation}\label{eq:mult-in-comp-path-alg-in-terms-of-homog-components}
uv=\sum_{\ell\geq 0}\sum_{\ell_1+\ell_2=\ell}u^{(\ell_1)}v^{(\ell_2)}
\end{equation}
for any
$u=\sum_{\ell\geq 0}u^{(\ell)}$ and $v=\sum_{\ell\geq0}v^{(\ell)}$, with $u^{(\ell)},v^{(\ell)}\in A^\ell$ for every $\ell\geq 0$ (in \eqref{eq:mult-in-comp-path-alg-in-terms-of-homog-components}, the right-hand side is a well-defined element of $\RQ{Q}$: for $\ell\geq 0$ fixed, we have $\sum_{\ell_1+\ell_2=\ell}u^{(\ell_1)}v^{(\ell_2)}=\sum_{k=0}^\ell u^{(k)}v^{(k-\ell)}$).
\end{defi}

We see that the multiplication in $\RQ{Q}$ resembles the multiplication of formal power series and is ultimately induced by the concatenation of paths.

The complete path algebra $\RQ{Q}$ is a Hausdorff topological space under the $\idealM$-adic topology, whose fundamental system
of open neighborhoods around $0$ is given by the powers of $\idealM=\idealM(Q)=\prod_{d\geq 1}A^d$, which is the two-sided ideal of $\RQ{Q}$ generated by $Q_1$. A crucial property of this
topology is the following:
\begin{align*}
&\text{a sequence $(x_n)_{n\in\mathbb{Z}_{>0}}$ of elements of $\RQ{Q}$ converges if and only if for every $d\geq 0$},\\
\nonumber &\text{the sequence $(x_n^{(d)})_{n\in\mathbb{Z}_{>0}}$ stabilizes as $n\rightarrow\infty$, in which case
$\underset{n\rightarrow\infty}{\lim}x_n=\underset{d\geq 0}{\sum}\underset{n\rightarrow\infty}{\lim}x_n^{(d)}$,}
\end{align*}
where $x_n^{(d)}\in A^d$ denotes the degree-$d$ component of $x_n$ according to \eqref{eq:comp-path-alg-def}.

\begin{defi}\label{def:QP-stuff} Let $Q=(Q_0,Q_1,h,t)$ be a quiver.
\begin{itemize}\item A \emph{potential} on $Q$ is any element of $\RQ{Q}$ all of whose terms are cyclic paths of positive length;
\item two potentials $S,S'\in\RQ{Q}$ are \emph{cyclically equivalent} if their difference $S-S'$
lies in the $\idealM$-adic closure of the vector subspace of $\RQ{Q}$ spanned by all the elements of the form $a_1\ldots a_d-a_2\ldots a_da_1$ with $a_1\ldots
a_d$ a cyclic path of positive length in $Q$;
\item a \emph{quiver with potential} (or \emph{QP} for short) is a pair $(Q,S)$ consisting of a loop-free quiver $Q$ and a potential $S$ on $Q$;
\item The \emph{direct sum} of two QPs $(Q,S)$ and $(Q',S')$ that have the same vertex set $Q_0=Q_0'$ is the QP $(Q,S)\oplus(Q',S')=(Q\oplus Q',S+S')$, where $Q\oplus Q'$ is the quiver whose vertex set is $Q_0$ and whose arrow set is the disjoint union $Q_1\sqcup Q_1'$, with tail and head functions induced by those of $Q$ and $Q'$;
\item If $(Q,S)$ and $(Q',S')$ are QPs on the same set of vertices, we say that $(Q,S)$ is \emph{right-equivalent} to $(Q',S')$ if there exists a
\emph{right-equivalence} between them, that is, a $\bbC$-algebra isomorphism $\varphi:\RQ{Q}\rightarrow \RQ{Q'}$ that acts as the identity on the vertex span $R$ and such that $\varphi(S)$ is cyclically
equivalent to $S'$;
\item for each arrow $a\in Q_1$ and each cyclic path $a_1\ldots a_d$ in $Q$ we define the \emph{cyclic derivative}
\begin{equation}
\partial_a(a_1\ldots a_d)=\underset{i=1}{\overset{d}{\sum}}\delta_{a,a_i}a_{i+1}\ldots a_da_1\ldots a_{i-1},
\end{equation}
(where $\delta_{a,a_i}$ is the \emph{Kronecker delta}) and extend $\partial_a$ by linearity and continuity to have $\partial_a(S)\in\RQ{Q}$ defined for every potential $S$ on $Q$.
\item The \emph{Jacobian ideal} $J(S)$ is the $\idealM$-closure of the two-sided ideal of $\RQ{Q}$ generated by $\{\partial_a(S)\suchthat a\in Q_1\}$, and the
\emph{Jacobian algebra} $\jacobalg{Q,S}$ is the quotient algebra $\RQ{Q}/J(S)$.
\item A QP $(Q,S)$ is \emph{trivial} if $S\in A^2$ and $\{\partial_a(S)\suchthat a\in Q_1\}$ spans $A$ as $\bbC$-vector space.
\item A QP $(Q,S)$ is \emph{reduced} if the degree-$2$ component of $S$ is $0$, that is,
if the expression of $S$ involves no $2$-cycles.
\end{itemize}
\end{defi}

\begin{prop}\cite[Propositions 3.3 and 3.7]{DWZ1} \begin{enumerate}\item If two potentials $S,S'\in\RQ{Q}$ are cyclically equivalent, then
$\partial_a(S)=\partial_a(S')$ for all $a\in Q_1$.
\item If $\varphi:\RQ{Q}\rightarrow \RQ{Q'}$ is a right-equivalence between $(Q,S)$ and $(Q',S')$, then $\varphi$
sends $J(S)$ onto $J(S')$ and therefore induces an isomorphism $\overline{\varphi}:\jacobalg{Q,S}\rightarrow\jacobalg{Q',S'}$.
\end{enumerate}
\end{prop}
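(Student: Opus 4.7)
The plan is to handle the two parts separately. For Part (1), by $\bbC$-linearity and continuity of $\partial_a$ it suffices to show that $\partial_a$ vanishes on the basic generators $a_1 \cdots a_d - a_2 \cdots a_d a_1$ of cyclic equivalence. Expanding each of $\partial_a(a_1 \cdots a_d)$ and $\partial_a(a_2 \cdots a_d a_1)$ by definition produces a sum indexed by the positions $i$ where the $i$-th letter equals $a$, and a direct inspection shows that the two sums are cyclic reindexings of one another: the summand from the $i$-th position on the left equals the one from the $(i-1)$-st position on the right, with indices read cyclically modulo $d$. Hence $\partial_a$ annihilates the generators, and the claim extends by continuity to the $\idealM$-adic closure.

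For Part (2), I would first apply Part (1) inside $\RQ{Q'}$ to the cyclically equivalent potentials $\varphi(S)$ and $S'$, obtaining $\partial_b(\varphi(S)) = \partial_b(S')$ for every $b \in Q_1'$ and hence $J(\varphi(S)) = J(S')$. This reduces the target equality $\varphi(J(S)) = J(S')$ to a substitution principle, $\varphi(J(S)) = J(\varphi(S))$. My plan to prove this is to establish a chain rule for the cyclic derivative via the split version
\[
\Delta_a(x_1 \cdots x_n) \;=\; \sum_{i\,:\, x_i = a} (x_{i+1} \cdots x_n)\,\otimes\,(x_1 \cdots x_{i-1}) \;\in\; \RQ{Q} \mathbin{\widehat{\otimes}}_R \RQ{Q},
\]
extended by linearity and continuity, so that $\partial_a = \mu \circ \Delta_a$ with $\mu$ the multiplication. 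An elementary Leibniz-type identity for $\Delta_a$ on products, composed with $\mu$, produces the chain rule
\[
\partial_b(\varphi(T)) \;=\; \sum_{a \in Q_1} u_{a,b}^{(T)} \cdot \varphi(\partial_a(T)) \cdot v_{a,b}^{(T)}
\]
as a convergent $\idealM$-adic sum with $u_{a,b}^{(T)}, v_{a,b}^{(T)} \in \RQ{Q'}$. This directly gives $J(\varphi(S)) \subseteq \varphi(J(S))$, and the reverse containment follows by running the same identity with $\varphi^{-1}$ in place of $\varphi$ and the potential $\varphi(S)$ in place of $T$, which is legitimate because $\varphi^{-1}$ is itself a continuous $R$-algebra isomorphism between complete path algebras. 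The induced algebra isomorphism $\overline{\varphi}: \jacobalg{Q,S} \to \jacobalg{Q',S'}$ is then immediate by passing to the quotients.

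The main obstacle I expect is the combinatorial bookkeeping for the chain rule: expanding $\varphi(a_1) \cdots \varphi(a_n)$ into its monomial components, tracking cyclic rotations of each resulting word, and matching contributions from occurrences of $b$ inside a single $\varphi(a_i)$ with the corresponding $\varphi(\partial_{a_i}(T))$ terms. Verifying $\idealM$-adic convergence of the resulting series is routine but needs to be stated carefully so that the chain rule makes sense termwise and the resulting identities take place in the closed ideals rather than only modulo them.
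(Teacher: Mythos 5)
The paper does not prove this statement; it is recalled verbatim from \cite[Propositions 3.3 and 3.7]{DWZ1}, so there is no ``paper's own proof'' to compare against. Evaluating your argument against what \cite{DWZ1} actually does: your plan is, in outline, the correct one and essentially mirrors theirs. Part (1) is fine as stated --- checking on the generators $a_1\cdots a_d - a_2\cdots a_d a_1$ and extending by continuity is exactly the right reduction, and your cyclic-reindexing computation is correct. For Part (2), the strategy of first using Part (1) to reduce to $\varphi(J(S)) = J(\varphi(S))$, proving one inclusion by a chain rule for cyclic derivatives, and getting the other by running the chain rule with $\varphi^{-1}$, is the same route as in \cite{DWZ1}.

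One imprecision worth flagging: the displayed chain rule
\[
\partial_b(\varphi(T)) = \sum_{a \in Q_1} u_{a,b}^{(T)} \cdot \varphi(\partial_a(T)) \cdot v_{a,b}^{(T)}
\]
is not literally correct with a single pair $(u_{a,b}^{(T)}, v_{a,b}^{(T)})$ per $(a,b)$. The split derivative $\Delta_b(\varphi(a))$ is a possibly infinite element of the completed tensor product $\RQ{Q'}\mathbin{\widehat\otimes}_R\RQ{Q'}$, not a simple tensor $u\otimes v$, and the correct identity (for a single cycle $T = a_1\cdots a_d$, hence by linearity and continuity in general) is
\[
\partial_b(\varphi(T)) \;=\; \sum_{a\in Q_1} \Delta_b(\varphi(a)) \star \varphi(\partial_a T),
\qquad (u\otimes v)\star w := u\,w\,v,
\]
which unfolds as a double sum $\sum_{a}\sum_{\iota} u_{a,b,\iota}\,\varphi(\partial_a T)\,v_{a,b,\iota}$. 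This matters for the final step: the right-hand side is an $\idealM$-adically convergent series whose partial sums lie in the two-sided ideal generated by $\{\varphi(\partial_a S)\}_a$, and to conclude $\partial_b(\varphi(S))\in\varphi(J(S))$ you must use that $\varphi(J(S))$ is closed. That closedness does hold --- $J(S)$ is closed by definition and $\varphi$ is a homeomorphism in the $\idealM$-adic topologies --- but it should be invoked explicitly rather than folded into the phrase ``convergent $\idealM$-adic sum,'' which does not quite parse for a finite sum over $a\in Q_1$. With that correction your argument is sound and coincides with the proof in \cite{DWZ1}.
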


One of the main technical results of \cite{DWZ1} is the \emph{Splitting Theorem}, which we now state.

\begin{thm}\cite[Theorem 4.6]{DWZ1}\label{thm:splittingthm} For every QP $(Q,S)$ there exist a reduced QP $(Q_{\operatorname{red}},S_{\operatorname{red}})$ and a trivial QP
$(Q_{\operatorname{triv}},S_{\operatorname{triv}})$ such that $(Q,S)$ is right-equivalent to the direct sum $(Q_{\operatorname{red}},S_{\operatorname{red}})\oplus(Q_{\operatorname{triv}},S_{\operatorname{triv}})$. Furthermore,
the right-equivalence class of each of the QPs $(Q_{\operatorname{red}},S_{\operatorname{red}})$ and  $(Q_{\operatorname{triv}},S_{\operatorname{triv}})$ is determined by the right-equivalence class of $(Q,S)$.
\end{thm}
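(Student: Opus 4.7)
The plan is to prove existence and uniqueness separately, both through iterative constructions of automorphisms of $\RQ{Q}$ that converge in the $\mathfrak{m}$-adic topology.

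\textbf{Existence.} I would first normalize the degree-$2$ component of $S$. Writing $S^{(2)}\in A^2$ as a sum of $2$-cycles and viewing it as a bilinear form on each $R$-bimodule component $e_jAe_i\oplus e_iAe_j$, I diagonalize by a suitable linear change of basis on $A$, which extends to an algebra automorphism of $\RQ{Q}$ fixing $R$. After this right-equivalence I may assume $S^{(2)}=\sum_{i=1}^{r}a_ib_i$ where the pairs $(a_i,b_i)$ are arrows in $Q_1$ forming honest $2$-cycles. Let $A_{\operatorname{triv}}\subseteq A$ be the $R$-subbimodule spanned by all the $a_i,b_i$, and let $A_{\operatorname{red}}$ be the complementary subbimodule spanned by the other arrows, with corresponding subquivers $Q_{\operatorname{triv}}$ and $Q_{\operatorname{red}}$. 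The goal is to find an algebra automorphism $\varphi$ of $\RQ{Q}$ fixing $R$ such that $\varphi(S)$ is cyclically equivalent to $\sum_i a_ib_i + U$ with $U\in\RQ{Q_{\operatorname{red}}}$.

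I would construct $\varphi$ as an infinite composition $\varphi=\lim_{n\to\infty}\psi_n\circ\cdots\circ\psi_1$ of ``elementary'' automorphisms, where $\psi_n$ sends $a_i\mapsto a_i+u_i^{(n)}$ and $b_i\mapsto b_i+v_i^{(n)}$ (and fixes the arrows of $A_{\operatorname{red}}$), with $u_i^{(n)},v_i^{(n)}\in\mathfrak{m}^n$. Inductively, if $\psi_{n-1}\circ\cdots\circ\psi_1(S)\equiv_{\operatorname{cyc}}\sum_ia_ib_i+U_n+R_n$ with $U_n\in\RQ{Q_{\operatorname{red}}}$ and the ``mixed'' remainder $R_n$ living in $\mathfrak{m}^{n+1}$, the identities $\partial_{a_i}(S^{(2)})=b_i$ and $\partial_{b_i}(S^{(2)})=a_i$ suggest the choices $u_i^{(n)}=-\partial_{b_i}(R_n)$ and $v_i^{(n)}=-\partial_{a_i}(R_n)$. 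The key computation is that these substitutions cancel the entire degree-$(n+2)$ component of $R_n$ involving $A_{\operatorname{triv}}$-arrows modulo cyclic equivalence, at the cost of producing new terms of degree $\ge n+2$ in higher order. Convergence in the $\mathfrak{m}$-adic topology is built in since each $\psi_n$ is the identity modulo $\mathfrak{m}^n$, and the degree-$d$ component of $\psi_n\circ\cdots\circ\psi_1(S)$ stabilizes once $n$ exceeds $d$.

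\textbf{Uniqueness.} The subbimodules $A_{\operatorname{triv}}$ and $A_{\operatorname{red}}$ and their dimensions are invariants of the linear form $S^{(2)}$: namely, $A_{\operatorname{triv}}$ is a complement to the radical of $S^{(2)}$ viewed as a quadratic form on $A$ (restricted to the $2$-cycle components), and $A_{\operatorname{red}}$ is that radical. Since a right-equivalence induces an $R$-bimodule isomorphism on $A/\mathfrak{m}^2$ preserving the degree-$2$ part up to cyclic equivalence, the underlying quivers $Q_{\operatorname{triv}}$ and $Q_{\operatorname{red}}$ are determined up to isomorphism. After composing with a linear right-equivalence I may therefore assume that two given splittings of $(Q,S)$ share the same ambient decomposition $Q=Q_{\operatorname{triv}}\oplus Q_{\operatorname{red}}$. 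Given two right-equivalences $\varphi,\varphi'$ realizing such splittings, the automorphism $\varphi'\circ\varphi^{-1}$ of $\RQ{Q}$ preserves $S_{\operatorname{triv}}+S_{\operatorname{red}}$ (up to cyclic equivalence), and the same inductive scheme as in the existence proof — now modifying $\varphi'\circ\varphi^{-1}$ by elementary automorphisms to push its ``mixing'' between $A_{\operatorname{triv}}$ and $A_{\operatorname{red}}$ into successively higher degrees — produces in the limit a direct-sum automorphism, exhibiting the required right-equivalences between the trivial parts and between the reduced parts.

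\textbf{Main obstacle.} The technical heart of both parts is verifying that the elementary substitutions really do remove the mixed terms in the next degree without creating new ones of the same degree, and that the resulting infinite composition defines a continuous (hence well-defined) algebra endomorphism of $\RQ{Q}$. The underlying identity making this work is $\sum_a a\,\partial_a(U)\equiv_{\operatorname{cyc}}(\deg U)\,U$ for $U$ homogeneous, combined with the fact that all relevant corrections have strictly increasing $\mathfrak{m}$-adic order. The bookkeeping in the uniqueness step, where two independent splittings must be simultaneously tracked, is the most delicate point.
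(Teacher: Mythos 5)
Note first that the paper does not prove this statement itself: it is the Splitting Theorem, cited verbatim from Derksen--Weyman--Zelevinsky \cite[Theorem~4.6]{DWZ1}, so the comparison has to be against the argument in that reference.

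Your overall plan --- normalize $S^{(2)}$ by a linear change of basis so that the trivial part becomes $\sum_i a_ib_i$, split $A$ into trivial and reduced subbimodules, kill the mixed higher-degree terms through an $\mathfrak{m}$-adically convergent composition of unitriangular substitutions on the trivial arrows, and then deduce uniqueness by absorbing any right-equivalence into a direct-sum automorphism by the same iterative mechanism --- is essentially the strategy of \cite{DWZ1}. However, there is a genuine gap at the step you yourself single out as the technical heart, and the identity you invoke does not close it.

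The proposed choice $u_i^{(n)}=-\partial_{b_i}(R_n)$, $v_i^{(n)}=-\partial_{a_i}(R_n)$ does not cancel the lowest-degree mixed part of $R_n$. Up to cyclic equivalence, the change this substitution produces in $\sum_i a_ib_i$ is $-\sum_i\bigl(a_i\,\partial_{a_i}(R_n)+b_i\,\partial_{b_i}(R_n)\bigr)$, that is, the sum $\sum_a a\,\partial_a(R_n)$ restricted to the \emph{trivial} arrows $a$ only. For a homogeneous cycle $W$ occurring in $R_n$ this equals $m_W\cdot W$ modulo cyclic equivalence, where $m_W$ is the number of occurrences in $W$ of arrows from $A_{\operatorname{triv}}$; it is neither $\deg W$ nor $1$. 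So if $W$ contains two or more trivial arrows (entirely possible: think of a cycle of the form $a_i\rho\, b_j\sigma$ with $\rho,\sigma$ nontrivial paths through $Q_{\operatorname{red}}$), after the substitution the residue $(1-m_W)W$ survives in the \emph{same} degree, the induction does not advance, and iterating the same rule only makes the coefficient grow. The identity $\sum_{a\in Q_1}a\,\partial_a(U)\equiv_{\operatorname{cyc}}(\deg U)\,U$ that you cite runs over \emph{all} arrows of $Q$, not only the trivial ones, and therefore does not justify the cancellation you claim.

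The repair, which is what \cite{DWZ1} actually does, is to exploit cyclic equivalence to write the lowest-degree mixed part of $R_n$ non-canonically as $\sum_i a_iq_i+\sum_i b_ip_i$, rotating each mixed cycle so that it starts with \emph{one} chosen trivial arrow; this represents every cycle exactly once rather than $m_W$ times. Substituting $a_i\mapsto a_i-p_i$, $b_i\mapsto b_i-q_i$ then cancels the chosen degree exactly, the new terms land in strictly higher $\mathfrak{m}$-adic order (here one uses $2d-2>d$ for $d\ge 3$), and the rest of your argument (convergence of the infinite composition, the identification of $Q_{\operatorname{triv}}$, $Q_{\operatorname{red}}$ from $S^{(2)}$, the uniqueness step with two tracked splittings) goes through as you outline. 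The gap is local to the choice of elementary automorphisms, but as written the proof does not close.
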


The QPs $(Q_{\operatorname{red}},S_{\operatorname{red}})$ and  $(Q_{\operatorname{triv}},S_{\operatorname{triv}})$ are called the \emph{reduced part} and the \emph{trivial
part} of $(Q,S)$, respectively; this terminology is well defined up to right-equivalence.

We now turn to the definition of mutation of a QP. Let $(Q,S)$ be a QP on the vertex set $Q_0$ and take $k\in Q_0$. Assume that $Q$ does not have
2-cycles incident to $k$. If necessary, replace $S$ with a cyclically equivalent potential so that every cyclic path appearing in the expression of $S$ begins and ends at a vertex different from $k$. This allows us to define $[S]\in\RQ{\widetilde{\mu}_k(Q)}$ as the potential on obtained from
$S$ by replacing each $k$-hook $ab$ with the arrow $[ab]$ (see the line preceding Definition \ref{def:threesteps}). Furthermore, set $\Delta_k(Q):=\sum a^*[ab]b^*$,
where the sum runs over all $k$-hooks $ab$ of $Q$.

\begin{defi}\label{def:QP-mutation}\cite[(5.8),(5.9) and Definition 5.5]{DWZ1} Under the assumptions and notation just stated, we define the \emph{premutation} of $(Q,S)$ with respect to $k$ as the QP
$\widetilde{\mu}_k(Q,S)=(\widetilde{\mu}_k(Q),\widetilde{\mu}_k(S))$ (see Definition \ref{def:threesteps}), where
$\widetilde{\mu}_k(S):=[S]+\Delta_k(Q)$. The \emph{mutation} $\mu_k(Q,S)$ of $(Q,S)$ is then defined to be reduced part of
$\widetilde{\mu}_k(Q,S)$.
\end{defi}

\begin{defi}\cite[Definition 7.2]{DWZ1}\label{Def:nondegpot} 
A QP $(Q,S)$ is \emph{non-degenerate} if it is 2-acyclic and the QP obtained after any possible sequence of QP-mutations is 2-acyclic.
\end{defi}

\begin{thm}\label{thm:propertiesofQP-mutations}\cite[Theorem 5.2, Corollary 5.4, Theorem 5.7, Corollary 7.4]{DWZ1}
\begin{enumerate}\item Premutations and mutations of QPs are well defined up to right-equivalence.
\item Mutations of QPs are involutive up to right-equivalence.
\item Every 2-acyclic quiver admits a non-degenerate potential over $\bbC$.
\end{enumerate}
\end{thm}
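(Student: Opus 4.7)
The theorem has three independent parts, so I plan to handle them in sequence, leveraging the Splitting Theorem (Theorem \ref{thm:splittingthm}) as the main technical tool throughout.

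For part (1), I would first show that the premutation is well-defined up to right-equivalence, which amounts to verifying that different cyclic representatives of $S$ (among those whose terms neither begin nor end at $k$) lead to right-equivalent QPs on $\widetilde{\mu}_k(Q)$. The key observation is that any two such representatives differ by a linear combination of cyclic-rotation relations, and the map $S\mapsto [S]$ that replaces each $k$-hook $ab$ by the arrow $[ab]$ intertwines these relations with cyclic equivalence on the new quiver; the auxiliary term $\Delta_k(Q)$ is independent of the choice. The mutation $\mu_k(Q,S)$, defined as the reduced part of the premutation, then inherits well-definedness from Theorem \ref{thm:splittingthm}, which says the reduced part is canonical up to right-equivalence.

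For part (2), I would directly compute $\widetilde{\mu}_k(\widetilde{\mu}_k(Q,S))$. After two premutations the dualization gives $a^{**}=a$ (so the arrows incident to $k$ are restored), while each original $k$-hook $ab$ contributes an arrow $[ab]$ after the first premutation and a new arrow $[b^{*}a^{*}]$ after the second; these form a 2-cycle with $[ab]$. Tracking the potential, one finds that $\widetilde{\mu}_k^{2}(S)$ splits as the direct sum of $S$ together with an explicit trivial QP whose degree-2 component pairs $[ab]$ with $[b^{*}a^{*}]$ and whose degree-3 correction $[ab]b^{**}a^{**}$ has cyclic derivatives that span the degree-1 piece. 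Applying the Splitting Theorem and verifying that the reduced part is precisely $(Q,S)$ yields involutivity up to right-equivalence.

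For part (3), I would use a Baire-category argument. Truncate the path algebra at length $N$ and regard potentials of bounded length as points of a finite-dimensional $\mathbb{C}$-vector space $P_N$. For each finite mutation sequence $w=(i_1,\ldots,i_r)$ and each ordered pair of vertices $(i,j)$, the condition that $\mu_w(Q,S)$ contains a 2-cycle between $i$ and $j$ translates, after a careful expansion of the premutation and reduction steps, into polynomial equations in the coefficients of $S$; this defines a Zariski-closed "bad locus" $B_w^{(i,j)}\subseteq P_N$ for $N$ large enough. The decisive step is to show each $B_w^{(i,j)}$ is a proper subset, by inductively constructing a single potential that avoids it, extending shorter-length solutions by generic higher-degree corrections. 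Since there are countably many such bad loci and $\mathbb{C}$ is uncountable, their union cannot exhaust $P_N$ for every $N$, and a non-degenerate potential exists in the complement.

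The main obstacle is part (3): controlling how each mutation step perturbs the coefficients of the potential, and verifying propriety of every $B_w^{(i,j)}$, demands a delicate inductive argument that simultaneously handles arbitrarily long mutation sequences. By contrast, parts (1) and (2) reduce to careful bookkeeping once the Splitting Theorem is available.
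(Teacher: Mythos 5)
This statement is not proved in the paper at all: it is a verbatim citation of results from Derksen--Weyman--Zelevinsky, namely \cite[Theorem 5.2, Corollary 5.4, Theorem 5.7, Corollary 7.4]{DWZ1}, and the paper simply recalls them as background. So there is no ``paper's own proof'' to compare against; what you have produced is an attempted reconstruction of the DWZ arguments.

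On part (1), there is a genuine gap. You assert that well-definedness ``amounts to verifying that different cyclic representatives of $S$ \ldots lead to right-equivalent QPs.'' That is only the easy half. The actual content of \cite[Theorem 5.2]{DWZ1} is that a right-equivalence $\varphi\colon \RQ{Q}\to\RQ{Q'}$ with $\varphi(S)$ cyclically equivalent to $S'$ induces a right-equivalence between $\widetilde{\mu}_k(Q,S)$ and $\widetilde{\mu}_k(Q',S')$. Constructing this induced right-equivalence on the premutated complete path algebra is a nontrivial step (one needs to decide how $\varphi$ should act on the new arrows $[ab]$, $a^*$, $b^*$ compatibly with the structure of $[S]+\Delta_k(Q)$), and it is not captured by cyclic-representative invariance alone. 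Your proposal never addresses it; invoking the Splitting Theorem afterwards only handles the passage from premutation to mutation, not the descent of the premutation itself to right-equivalence classes.

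Parts (2) and (3) are in the right spirit but imprecise. For (2), the trivial part of $\widetilde{\mu}_k^2(Q,S)$ is the degree-two term $\sum[ab][b^*a^*]$ together with the 2-cycle arrows $[ab]$, $[b^*a^*]$; the term $\sum b^{**}[b^*a^*]a^{**}$ coming from the second $\Delta_k$ is \emph{not} part of a trivial QP (a trivial potential lives in $A^2$), but rather is absorbed by the right-equivalence that performs the reduction (roughly, a substitution $[ab]\mapsto[ab]-b^{**}a^{**}$ before identifying $a^{**}, b^{**}$ with $a,b$). Your description of a ``degree-3 correction whose cyclic derivatives span the degree-1 piece'' conflates the trivial part with the reduction map. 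For (3), the Baire-category framework is indeed the one DWZ use (countably many nonvanishing polynomial conditions, one for each finite mutation sequence and pair of vertices, over the uncountable field $\bbC$), but the key technical point --- that each bad locus is a \emph{proper} closed subset, i.e.\ that for each fixed mutation sequence some potential survives --- is precisely where the work lies, and your ``extending shorter-length solutions by generic higher-degree corrections'' is a plausible slogan but does not constitute an argument. One must actually verify that the polynomial cutting out each bad locus is not identically zero, which DWZ do with specific constructions.
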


\subsection{Mutations of representations}

As in the previous subsection, our main reference is \cite{DWZ1}.

\begin{defi}\cite[Section 2]{MRZ}\cite[Definition 10.1]{DWZ1} Let $(Q,S)$ be a QP. A \emph{decorated representation} of $(Q,S)$, or of its Jacobian algebra $\jacobalg{Q,S}$, is a pair $\mathcal{M}=(M,\mathbf{v})$, where $M$ is a finite-dimensional left $\jacobalg{Q,S}$-module and $\mathbf{v}=(v_j)_{j\in Q_0}$ is a $Q_0$-tuple of non-negative integers.
\end{defi}

By setting $M_j=e_jM$ for each $j\in Q_0$, and $a_M:M_{t(a)}\rightarrow M_{h(a)}$ as the multiplication by $a\in Q_1$ given by the $\jacobalg{Q,S}$-module structure of $M$, we easily see that each $\jacobalg{Q,S}$-module induces a representation of the quiver $Q$. Furthermore, since every finite-dimensional $\RQ{Q}$-module is nilpotent (that is, annihilated by some power of $\idealM$, see \cite[\S10]{DWZ1}) any finite-dimensional left $\jacobalg{Q,S}$-module is prescribed by the following data:
\begin{enumerate}\item A tuple $(M_i)_{i\in Q_0}$ of finite-dimensional $\bbC$-vector spaces;
\item a family $(a_M:M_{t(a)}\rightarrow M_{h(a)})_{a\in Q_0}$ of $\bbC$-linear transformations annihilated by $\{\partial_a(S)\suchthat a\in Q_1\}$, for which there exists an integer $r\geq 1$ with the property that the composition ${a_1}_{M}\ldots {a_r}_{M}$ is identically zero for every $r$-path $a_1\ldots a_r$ in $Q$.
\end{enumerate}

Let $(Q,S)$ be
any QP, and let $\varphi:\RQ{ Q_{\operatorname{red}}\oplus C}\rightarrow\RQ{Q}$ be a right equivalence between a direct sum 
$(Q_{\operatorname{red}},S_{\operatorname{red}})\oplus(C,T)$ and $(Q,S)$, where $(Q_{\operatorname{red}},S_{\operatorname{red}})$ is a reduced QP and $(C,T)$ is a trivial QP. Denoting by $\iota:\RQ{Q_{\operatorname{red}}}\rightarrow \RQ{Q_{\operatorname{red}}\oplus C}$ the canonical inclusion, by \cite[Propositions 3.7 and 4.5]{DWZ1} we have induced $\bbC$-algebra isomorphisms
$$
\xymatrix{
\jacobalg{Q_{\operatorname{red}},S_{\operatorname{red}}}\ar[r]^{\overline{\iota}\qquad\quad} & \jacobalg{(Q_{\operatorname{red}}\oplus C,S_{\operatorname{red}}+T} \ar[r]^{\qquad\quad \overline{\varphi}} &\jacobalg{Q,S}.
}
$$
Thus, if
$\mathcal{M}=(M,\mathbf{v})$ is a decorated representation of $(Q,S)$, and we set $M^\varphi=M$ as $\bbC$-vector space, and define an action of $R\langle\langle
Q_{\operatorname{red}}\rangle\rangle$ on $M^\varphi$ by setting $u_{M^\varphi}:=\varphi(u)_M$ for $u\in\RQ{
Q_{\operatorname{red}}}\rangle\rangle$, then $\mathcal{M}_{\operatorname{red}}:=(M^\varphi,\mathbf{v})$ is a decorated representation of $(Q_{\operatorname{red}},S_{\operatorname{red}})$.

\begin{defi}\label{def:reducedpartofrep}\cite[Definition 10.4]{DWZ1}
The decorated representation $\mathcal{M}_{\operatorname{red}}=(M^\varphi,\mathbf{v})$ of $(Q_{\operatorname{red}},S_{\operatorname{red}})$ is the \emph{reduced part} of $\mathcal{M}$.
\end{defi}

We now turn to the representation-theoretic analogue of the notion of QP-mutation. Let $(Q,S)$ be a QP. Fix a vertex $k\in Q_0$, and suppose that $Q$ does not have 2-cycles incident to $k$. Suppose that $a_1,\ldots,a_s$ (resp. $b_1,\ldots,b_r$) are the arrows of $Q$ ending at $k$ (resp. starting at $k$). Take a decorated representation $\mathcal{M}=(M,\mathbf{v})$ of $(Q,S)$ and set
$$
M_{\operatorname{in}}=M_{\operatorname{in}}(k)=\bigoplus_{i=1}^sM_{t(a_i)}, \ \ M_{\operatorname{out}}=M_{\operatorname{out}}(k)=\bigoplus_{j=1}^rM_{h(b_j)}.
$$
The action of the arrows $a_1,\ldots,a_s$ and $b_1,\ldots,b_t$ on $M$ induces $K$-linear maps
$$
\alpha=\alpha_k=[(a_1)_M \ \ldots \ (a_s)_M]:M_{\operatorname{in}}\rightarrow M_k, \ \ \beta=\beta_k=\left[\begin{array}{c}(b_1)_M \\ \vdots \\ (b_r)_M\end{array} \right]:M_k\rightarrow M_{\operatorname{out}}.
$$

For each pair $(i,j)\in [1,s]\times [1,r]$, set $\gamma_{i,j}:=\partial_{b_ja_i}(S)_M:M_{h(b_j)}\rightarrow M_{t(a_i)}$, and arrange these maps into a matrix to obtain a linear map $\gamma=\gamma_k:M_{\operatorname{out}}\rightarrow M_{\operatorname{in}}$. Since $M$ is a $\jacobalg{Q,S}$-module, by \cite[Lemma 10.6]{DWZ1} we have $\alpha\gamma=0$ and $\gamma\beta=0$.

For $j\in Q_0$ define
$$
\overline{M}_j:=\begin{cases}
    M_j & \text{if} \ j\neq k;\\
    \coker\beta\oplus\frac{\ker\alpha}{\image\gamma}\oplus \bbC^{v_k} & \text{if} \ j=k;
\end{cases}
\qquad 
\overline{v}_j:=\begin{cases}
v_j & \text{if} \ j\neq k;\\
\dim_{\bbC}\left(\frac{\ker\beta}{\ker\beta\cap\image\alpha}\right) & \text{if} \ j=k.
\end{cases}
$$

We define the action of the arrows of $\widetilde{\mu}_k(Q)$ on $\overline{M}:=\bigoplus_{j\in Q_0}\overline{M}_j$ as follows. If $c$ is an arrow of $Q$ not incident to $k$, we define $c_{\overline{M}}:=c_M$, and for each pair $(i,j)\in[1,s]\times [1,r]$ we set $[b_ja_i]_{\overline{M}}:=(b_ja_i)_M={b_j}_M{a_i}_M$. To define the action of the remaining arrows, choose a linear map $\sigma:\frac{\ker\alpha}{\image\gamma}\rightarrow\ker\alpha$ such that the composition $\frac{\ker\alpha}{\image\gamma}\overset{\sigma}{\rightarrow}\ker\alpha\twoheadrightarrow\frac{\ker\alpha}{\image\gamma}$ is the identity (where $\ker\alpha\twoheadrightarrow\frac{\ker\alpha}{\image\gamma}$ is the natural projection). Then set
$$
 \left[\begin{array}{c}(a_1^*)_{\overline{M}} \\ \vdots \\ (a_s^*)_{\overline{M}}\end{array} \right]:=\overline{\beta}:=\left[\begin{array}{ccc}\overline{\gamma} & \iota\sigma & 0\end{array}\right]:\overline{M}_k\rightarrow M_{\operatorname{in}}, \qquad
 \left[\begin{array}{ccc}(b_1^*)_{\overline{M}}  \ldots & (b_r^*)_{\overline{M}}\end{array}\right]:=\overline{\alpha}:=\left[\begin{array}{c}-p \\ 0 \\ 0\end{array}\right]:M_{\operatorname{out}}\rightarrow\overline{M}_k,
$$
where $\iota:\ker\alpha\rightarrow M_{\operatorname{in}}$ is the inclusion, $p:M_{\operatorname{out}}\rightarrow\coker\beta$ is the natural projection, and $\overline{\gamma}:\coker\beta\rightarrow M_{\operatorname{in}}$ is the map induced by $\gamma$ (recall that $\image\beta\subseteq\ker\gamma$).

Since $\idealM^{\ell}M=0$ for some sufficiently large $\ell$, this action of the arrows of $\widetilde{\mu}_k(Q)$ on $\overline{M}$ extends uniquely to an action of $\RQ{\widetilde{\mu}_k(Q)}$ under which $\overline{M}$ is an $\RQ{\widetilde{\mu}_k(Q)}$-module. By \cite[Proposition 10.7]{DWZ1}, $\overline{M}$ is a left $\jacobalg{\widetilde{\mu}_k(Q),\widetilde{\mu}_k(S)}$-module.

\begin{remark} The choice of the linear map $\sigma$ is not canonical. However, by \cite[Proposition 10.9]{DWZ1}, different choices lead to isomorphic $\RQ{\widetilde{\mu}_k(Q)}$-module structures on $\overline{M}$.
\end{remark}

\begin{defi}\cite[Definition 10.22]{DWZ1} The pair $\widetilde{\mu}_k(\mathcal{M}):=(\overline{M},\overline{\mathbf{v}})$ is called the \emph{premutation}  of $\mathcal{M}=(M,\mathbf{v})$ at $k$. The \emph{mutation} of $\mathcal{M}$ at $k$, denoted by $\mu_k(\mathcal{M})$, is the reduced part of $\widetilde{\mu}_k(\mathcal{M})$.
\end{defi}

\begin{thm}\cite[Proposition 10.10, Corollary 10.12, and Theorem 10.13]{DWZ1}
\begin{enumerate}\item Premutations and mutations of decorated representations are well defined up to right-equivalence.
\item Mutations of decorated representations are involutive up to right-equivalence.
\end{enumerate}
\end{thm}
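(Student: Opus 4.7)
The plan is to follow the Derksen--Weyman--Zelevinsky strategy and reduce both assertions to explicit linear-algebra computations at the vertex $k$. For part (1), I would first verify that the $\RQ{\widetilde\mu_k(Q)}$-module structure on $\overline M$ is independent of the auxiliary splitting $\sigma:\ker\alpha/\image\gamma \to \ker\alpha$. Any two such splittings differ by a linear map $\ker\alpha/\image\gamma \to \image\gamma$, and I would exhibit an explicit $R$-linear automorphism of $\overline M$---the identity away from $k$, and a unipotent shear on the $\ker\alpha/\image\gamma$ summand of $\overline M_k$---that intertwines the two $\widetilde\mu_k(S)$-actions. Next, given a right-equivalence $\varphi:\RQ{Q}\to\RQ{Q'}$ between $(Q,S)$ and $(Q',S')$, I would push a decorated representation $\mathcal M$ forward to $\mathcal M^{\varphi}$ of $(Q',S')$ and show that $\overline{\mathcal M^{\varphi}}$ is naturally isomorphic to $\overline{\mathcal M}$ as a representation of $\widetilde\mu_k(Q',S')$; since $\varphi$ fixes the idempotents, the triple $(\alpha,\beta,\gamma)$ transforms naturally along $\varphi$, and the formula defining $\overline M$ transports. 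Well-definedness of $\mu_k(\mathcal M)$ then follows by passing to reduced parts, using the module-theoretic counterpart of Theorem \ref{thm:splittingthm}.

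For part (2), the strategy is to compute $\widetilde\mu_k\widetilde\mu_k(\mathcal M)$ at the vertex $k$ directly and exhibit a right-equivalence with $\mathcal M$ plus a trivial decorated summand, after which part (1) lets us pass to reduced parts. The maps entering the second mutation are
\[
\overline\alpha=\begin{bmatrix}-p\\ 0\\ 0\end{bmatrix}:M_{\mathrm{out}}\to\overline M_k,
\qquad
\overline\beta=\begin{bmatrix}\overline\gamma & \iota\sigma & 0\end{bmatrix}:\overline M_k\to M_{\mathrm{in}},
\]
together with a new $\overline\gamma':M_{\mathrm{in}}\to M_{\mathrm{out}}$ extracted from the cyclic derivatives of $\widetilde\mu_k(S)=[S]+\Delta_k(Q)$. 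Using $\alpha\gamma=0$ and $\gamma\beta=0$ one checks $\image\overline\beta=\ker\alpha$ and $\ker\overline\alpha=\image\beta$, so that $\coker\overline\beta\cong\image\alpha$, and a dimension count gives $\ker\overline\alpha/\image\overline\gamma'\cong\image\beta$. Combined with the decoration summand, this exhibits $\overline{\overline M}_k\cong M_k$ compatibly with a canonical decomposition of $M_k$ matching $\image\alpha$, a chosen complement of $\image\gamma$ in $\ker\alpha$ lifted into $M_k$, the subspace pulled back through $\beta$ from $\image\beta$, and the decoration space $\bbC^{v_k}$, with $\overline{\overline v}_k=v_k$.

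The main obstacle will be the bookkeeping in the involutivity step: one must verify that the twice-mutated arrows $c^{**}$ at $k$ reassemble into the original $\alpha$ and $\beta$ under the identification $\overline{\overline M}_k\cong M_k$, and that the nested hook compositions $[[ab]c]$ recover the original paths through $k$ modulo the trivial QP summand created by double mutation. This relies on the QP-level involution $\mu_k\mu_k(Q,S)\simeq(Q,S)$ from Theorem \ref{thm:propertiesofQP-mutations}(2) and on a careful matching of the two splittings $\sigma$ used in the consecutive mutation steps, so that their contributions cancel in the reduced part. Once these choices are coordinated, the remaining verifications are routine diagram chases.
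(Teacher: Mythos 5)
This theorem is imported wholesale from Derksen--Weyman--Zelevinsky (\cite[Proposition~10.10, Corollary~10.12, Theorem~10.13]{DWZ1}); the paper offers no proof of its own, so there is nothing in-text to compare your sketch against. Your part~(1) outline does track the DWZ strategy (independence of $\sigma$ via a unipotent shear, transport along a right-equivalence that fixes idempotents, then passage to reduced parts), though the claim that ``the triple $(\alpha,\beta,\gamma)$ transforms naturally along $\varphi$'' hides the genuinely delicate step: $\varphi$ need not send arrows to arrows, so $\alpha$, $\beta$, $\gamma$ must be reconstructed from the degree-$1$ and higher pieces of $\varphi(a_i),\varphi(b_j)$, which is where most of the work in DWZ's Proposition~10.10 lives.

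Your part~(2) sketch contains a concrete error. For the second premutation at $k$, the map you call $\overline\gamma'$ is built from the cyclic derivatives $\partial_{a_i^* b_j^*}(\widetilde\mu_k(S))=[b_ja_i]$, and $[b_ja_i]_{\overline M}=\beta_j\alpha_i$; so $\overline\gamma'$ is precisely $\beta\alpha:M_{\mathrm{in}}\to M_{\mathrm{out}}$ and its image is $\image(\beta\alpha)$, which is in general nonzero. Since you have correctly identified $\ker\overline\alpha=\image\beta$, the quotient $\ker\overline\alpha/\image\overline\gamma'$ is $\image\beta/\image(\beta\alpha)\cong M_k/(\image\alpha+\ker\beta)$, \emph{not} $\image\beta$ as you assert; the ``dimension count'' cannot rescue the claim because it is simply false whenever $\beta\alpha\ne 0$. (The dimensions do in fact add up to $\dim M_k$, and $\overline{\overline v}_k=v_k$ holds, but via $\image\alpha$, $M_k/(\image\alpha+\ker\beta)$ and $\ker\beta/(\ker\beta\cap\image\alpha)$, not the pieces you list --- note also that ``a complement of $\image\gamma$ in $\ker\alpha$ lifted into $M_k$'' does not parse, since $\ker\alpha\subseteq M_{\mathrm{in}}$.) Finally, the hardest part of Theorem~10.13 --- constructing the right-equivalence $\widetilde\mu_k\widetilde\mu_k(S)\simeq S\oplus(\text{trivial})$ from Theorem~\ref{thm:propertiesofQP-mutations}(2) and verifying, arrow by arrow, that it carries $\widetilde\mu_k\widetilde\mu_k(\mathcal M)$ to $\mathcal M$ plus the obvious trivial summand --- is dismissed as ``routine diagram chases,'' which is where the substance of DWZ's argument actually lies.
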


\section{Behavior of projectives under mutations of representations}

The aim of this section is to prove the following. Notice that $(Q,S)$ is not assumed to be Jacobi-finite.

\begin{thm}\label{thm:mut-of-proj-is-proj} For any quiver with potential $(Q,S)$ and any pair of distinct vertices $k,\ell\in Q_0$, with $k$ not  incident to $2$-cycles of $Q$, if the indecomposable projective $P_{(Q,S)}(\ell)$ is finite-dimensional, then so is the indecomposable projective $P_{\mu_k(Q,S)}(\ell)$, and the decorated representations $\mu_k(P_{(Q,S)}(\ell),0)$ and $(P_{\mu_k(Q,S)}(\ell),0)$ of the Jacobian algebra $\jacobalg{\mu_k(Q,S)}$ are isomorphic.
\end{thm}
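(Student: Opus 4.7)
My plan is to establish the isomorphism at the level of premutations and then pass to reduced parts. Since the Jacobian algebras $\jacobalg{\widetilde{\mu}_k(Q,S)}$ and $\jacobalg{\mu_k(Q,S)}$ are canonically identified via the Splitting Theorem \cite[Theorem 4.6]{DWZ1}, and since the reduction operation preserves right-equivalence classes of decorated representations, it suffices to establish an isomorphism of decorated representations of $\widetilde{\mu}_k(Q,S)$:
\begin{equation*}
\widetilde{\mu}_k(P_{(Q,S)}(\ell),0)\;\cong\;\bigl(\jacobalg{\widetilde{\mu}_k(Q,S)}e_\ell,\,0\bigr).
\end{equation*}
Write $M:=\jacobalg{Q,S}e_\ell$, so $M_j=e_j\jacobalg{Q,S}e_\ell$ for each $j\in Q_0$.

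The decoration is handled quickly: since $\ell\ne k$, every nonzero path in $\RQ{Q}$ from $\ell$ to $k$ has positive length and factors as $a_i\cdot p$ for some arrow $a_i$ into $k$, so $\alpha_k:M_{\mathrm{in}}(k)\to M_k$ is surjective. Consequently $\ker\beta_k\subseteq\image\alpha_k$ and $\overline{v}_k=0$. The central step is the exactness of
\begin{equation*}
M_{\mathrm{out}}(k)\xrightarrow{\;\gamma_k\;}M_{\mathrm{in}}(k)\xrightarrow{\;\alpha_k\;}M_k\to 0.
\end{equation*}
Surjectivity was just noted, and $\image\gamma_k\subseteq\ker\alpha_k$ is the identity $\alpha_k\gamma_k=0$ of \cite[Lemma 10.6]{DWZ1}. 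The inclusion $\ker\alpha_k\subseteq\image\gamma_k$ is the crux of the argument. To prove it, given $(x_i)$ with $\sum_i a_i x_i\in J(S)$, I would write this element as an $\idealM$-adically convergent sum $\sum_c u_c\partial_c(S)v_c$ and exploit the cyclic identities
\begin{equation*}
\partial_{a_i}(S)=\sum_j\partial_{b_ja_i}(S)\cdot b_j,\qquad \partial_{b_j}(S)=\sum_i a_i\cdot\partial_{b_ja_i}(S),
\end{equation*}
which follow from the definition of the cyclic derivative by grouping positions in each cycle of $S$ according to the adjacent arrow incident to $k$ (and by $2$-acyclicity of $Q$ at $k$, which rules out the forbidden adjacency $a_ib_j$). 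A careful regrouping of all contributions by the last arrow should then yield $z_j\in M_{h(b_j)}$ with $x_i=\sum_j\partial_{b_ja_i}(S)z_j$ in $M_{t(a_i)}$.

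Granted the exactness, $\overline{M}_k=\coker\beta_k$ and a path count shows that $\dim\overline{M}=\dim\jacobalg{\widetilde{\mu}_k(Q,S)}e_\ell$. I would then construct a morphism $\Phi:\jacobalg{\widetilde{\mu}_k(Q,S)}e_\ell\to \overline{M}$ of $\jacobalg{\widetilde{\mu}_k(Q,S)}$-modules determined by $\Phi(e_\ell)=e_\ell\in M_\ell=\overline{M}_\ell$; well-definedness reduces to verifying that the cyclic derivatives of $\widetilde{\mu}_k(S)=[S]+\Delta_k(Q)$ annihilate $e_\ell\in\overline{M}$, a short direct computation from the formulas for $\overline{\alpha}_k,\overline{\beta}_k$ and the action of the arrows not incident to $k$. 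Surjectivity of $\Phi$ is clear: components $\overline{M}_j$ with $j\ne k$ are generated from $e_\ell$ by the paths of $\widetilde{\mu}_k(Q)$ obtained from paths of $Q$ by replacing each $k$-hook $b_ja_i$ with $[b_ja_i]$, while $\overline{M}_k=\coker\beta_k$ is reached via the new arrows $b_j^*$. Injectivity then follows by the dimension comparison. The principal obstacle is the exactness displayed in the previous paragraph: it is the linear-algebraic shadow of the second differential of Ginzburg's DG-algebra associated to $(Q,S)$, and proving it within the paper's elementary framework requires careful control of the $\idealM$-adic closure in the definition of $J(S)$. The finite-dimensionality hypothesis on $P_{(Q,S)}(\ell)$ is what makes this feasible: it bounds nonzero paths in $\jacobalg{Q,S}e_\ell$ by a finite set, reducing a priori infinite sums to finite manipulations.
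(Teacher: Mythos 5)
Your overall strategy mirrors the paper's: work at the level of the premutation and reduce, observe that the decoration vanishes because $\widehat{\alpha}$ is surjective when $k\ne\ell$, and identify the exactness of $M_{\mathrm{out}}\xrightarrow{\gamma}M_{\mathrm{in}}\xrightarrow{\alpha}M_k\to 0$ as the real content. The cyclic identities $\partial_{a_i}(S)=\sum_j\partial_{b_ja_i}(S)b_j$ and $\partial_{b_j}(S)=\sum_i a_i\partial_{b_ja_i}(S)$ you invoke are exactly what drives the paper's Proposition~\ref{prop:spanning ker a/ker gamma for radical}. Where you differ is in the endgame: the paper matches up the $\alpha$-$\beta$-$\gamma$ triangle for $P_{(Q,S)}(\ell)$ with the $\mathfrak{a}$-$\mathfrak{b}$-$\mathfrak{c}$ triangle for $P_{\widetilde\mu_k(Q,S)}(\ell)$ via a commutative diagram (Propositions~\ref{prop:alpha-beta-gamma-maps-for-radicals}--\ref{prop:alpha-beta-diagrams-for-proj-of-premut-are-isomorphic}), getting the middle isomorphism $\psi_k$ from the first isomorphism theorem applied to the surjection $\widehat{\mathfrak{a}}$, whose kernel is $\image\widehat{\beta}$; you instead propose to build the universal module map $\Phi:\jacobalg{\widetilde\mu_k(Q,S)}e_\ell\to\overline{M}$, check surjectivity, and finish by a dimension count. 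Those two routes use the same raw ingredients and are roughly equivalent in effort once the exactness is in hand; the paper's diagram approach has the advantage that it does not need a separate dimension-count step.

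The genuine gap is that the exactness $\ker\alpha=\image\gamma$ (for the projective, i.e.\ $\ker\widehat\alpha=\image\widehat\gamma$ in the paper's notation) is only sketched: "a careful regrouping of all contributions by the last arrow should then yield $z_j$" is precisely where the difficulty lives. The paper's proof of Proposition~\ref{prop:spanning ker a/ker gamma for radical} has to separate the degree-$0$ and positive-degree parts of the left coefficients $\lambda_{n,\omega,q}$, note that a nonzero degree-$0$ part forces $\omega\in\{b_1,\ldots,b_r\}$, factor the positive-degree part through the $a_i$'s, and then pass to the $\idealM$-adic limit via the closedness lemmas of \cite[Lemmas~13.2, 13.4, 13.6]{DWZ1}; none of that is a triviality. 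Your suggested shortcut — using finite-dimensionality of $P_{(Q,S)}(\ell)$ to reduce to "finite manipulations" — is not what the paper does: Proposition~\ref{prop:spanning ker a/ker gamma for radical} is stated and proven for the radical of the projective \emph{without} any finite-dimensionality assumption, and finite-dimensionality enters only in the statement of Theorem~\ref{thm:mut-of-proj-is-proj} to make the output a decorated representation. Your idea could plausibly be made to work (modulo $\idealM^N$ for $N$ large enough that $\idealM^Ne_\ell\subseteq J(S)$, the closure is automatic since subspaces of finite-dimensional spaces are closed), but you would still need to lift a finite decomposition modulo $\idealM^N$ back to the complete path algebra and control the tail terms; as written this is asserted rather than argued, so the crux remains open.
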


\begin{remark}\label{Plamondon}
Theorem \ref{thm:mut-of-proj-is-proj} can be deduced from \cite[Proposition 4.1]{P11}. Indeed, with the notation from \cite{P11}, the functor $F$ therein sends the object $\Sigma^{-1}\Gamma_{\ell}$ to the projective $P_{(Q,S)}(\ell)$ (which is assumed to be finite-dimensional). Since mutation at $k$ sends $\Gamma_\ell$ to $\Gamma'_\ell$ if $\ell\neq k$, one deduces Theorem \ref{thm:mut-of-proj-is-proj}. Similarly, Corollary \ref{coro:mut-of-inj-is-inj} below can be deduced from \cite[Proposition 4.1]{P11} too. We thank Pierre-Guy Plamondon for pointing this out to us. Our proofs of Theorem \ref{thm:mut-of-proj-is-proj} and Corollary \ref{coro:mut-of-inj-is-inj} are, however, fully independent from those in \cite{P11} and rely only on the formal path combinatorics and linear algebra within complete path algebras.
\end{remark}

We will mantain the assumptions from Theorem \ref{thm:mut-of-proj-is-proj} throughout the whole section. We will suppose that the arrows ending (resp. starting) at $k$ are $a_1,\ldots,a_s$ (resp. $b_1,\ldots,b_r$), and that the arrows from $\ell$ to $k$ (resp. from $k$ to $\ell$) are precisely $a_1,\ldots,a_{\overline{s}}$ (resp. $b_1,\ldots,b_{\overline{r}}$). Note that $\overline{s}$ and $\overline{r}$ cannot simultaneously be non-zero, since $Q$ does not have $2$-cycles incident to~$k$.

As a preparation for the proof of Theorem \ref{thm:mut-of-proj-is-proj}, we need to study some of the vector spaces that arise when one applies the mutation of representations to the radical $\rad_{(Q,S)}P_{(Q,S)}(\ell)$.

Let us first describe the representation $\rad_{(Q,S)}P_{(Q,S)}(\ell)$. Let $\idealM$ be the two-sided ideal of $\RQ{Q}$ generated by the arrows of $Q$, and let $\pi:\RQ{Q}\rightarrow\jacobalg{Q,S}$ denote the projection from the complete path algebra to the Jacobian algebra. Then $\rad_{(Q,S)}P_{(Q,S)}(\ell)=\pi(\idealM)e_{\ell}$ (even if $P_{(Q,S)}(\ell)$ is infinite-dimensional over $\mathbb{C}$). Furthermore, as a representation of $Q$, to a vertex $j$ it attaches the space $e_j\pi(\idealM)e_{\ell}$, and for an arrow $a:j\rightarrow i$, the corresponding linear map $e_j\pi(\idealM)e_{\ell}\rightarrow e_i\pi(\idealM)e_{\ell}$ is induced by merely composing with $a$ each path from $\ell$ to $j$, i.e., $p\mapsto a\cdot p$.

If one is interested in applying the $k^{\operatorname{th}}$ mutation of representations to $\rad_{(Q,S)}P_{(Q,S)}(\ell)=\pi(\idealM)e_{\ell}$, one has to form Derksen-Weyman-Zelevinsky's $\alpha$-$\beta$-$\gamma$ triangle of linear maps
$$
\xymatrix{
 & e_k\pi(\idealM)e_\ell  \ar[dr]^{\beta:=\left[\begin{array}{c}b_1\cdot \\ \vdots \\ b_r\cdot \end{array}\right]} & \\
 \bigoplus_{a\in Q_1:h(a)=k} e_{t(a)} \pi(\idealM) e_\ell \ar[ur]^{\alpha:=\left[\begin{array}{ccc}a_1\cdot  & \cdots & a_s\cdot \end{array}\right]\qquad} & & \bigoplus_{b\in Q_1:t(b)=k} e_{h(b)} \pi(\idealM)e_\ell \ar[ll]^{\gamma:=\text{ {\tiny $\left[\begin{array}{ccc}
 \partial_{b_1a_1}(S) \cdot & \cdots & \partial_{b_r a_1}(S)\cdot \\
 \vdots & \ddots & \vdots\\
 \partial_{b_1a_s}(S) \cdot & \cdots & \partial_{b_ra_s}(S)\cdot
 \end{array}\right]$}}}
 }
$$
By \cite[Lemma 10.6]{DWZ1}, $\image\gamma_k\subseteq\ker\alpha_k$.
Our first result specifies an explicit spanning set for the vector space $\ker\alpha_k/\image\gamma_k$.

\begin{prop}\label{prop:spanning ker a/ker gamma for radical} The elements $(\partial_{b_1a_i}(S))_{i=1}^s,\ldots,(\partial_{b_{\bar{r}}a_i}(S))_{i=1}^s\in \bigoplus_{i=1}^s e_{t(a_i)}\pi(\idealM) e_\ell$ belong to $\ker\alpha_k$, and their projections $(\partial_{b_1a_i}(S))_{i=1}^s+\image\gamma_k,\ldots,(\partial_{b_{\bar{r}}a_i}(S))_{i=1}^s+\image\gamma_k$ span
$\ker\alpha_k/\image\gamma_k$ as a $\bbC$-vector space.
\end{prop}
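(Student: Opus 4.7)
The strategy rests on the canonical $\bbC$-vector-space decomposition
\[
e_k\RQ{Q}e_\ell=\bigoplus_{i=1}^{s}a_i\cdot\bigl(e_{t(a_i)}\RQ{Q}e_\ell\bigr),
\]
obtained by reading each path from $\ell$ to $k$ via its terminal arrow (which must be some $a_i$). For each $i$, let $\pi_i\colon e_k\RQ{Q}e_\ell\to e_{t(a_i)}\RQ{Q}e_\ell$ denote the $\bbC$-linear projection onto the $i$-th summand; since it lowers $\idealM$-adic degree by one, $\pi_i$ is continuous. The identity that drives the computation is
\[
\partial_{b_m}(S)=\sum_{i=1}^{s}a_i\,\partial_{b_m a_i}(S)\qquad\text{in }\RQ{Q},
\]
a direct consequence of the cyclic-derivative formula: every path appearing in $\partial_{b_m}(S)$ ends at $t(b_m)=k$ and hence has terminal arrow some $a_i$.

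For the first assertion, the hypothesis that $Q$ has no 2-cycles incident to $k$ together with the consequence $\overline{s}\cdot\overline{r}=0$ ensures that the 2-path $b_m a_i$ (with $m\le\overline{r}$) is not a 2-cycle, so $\partial_{b_m a_i}(S)\in e_{t(a_i)}\idealM e_\ell$. Applying $\alpha_k$ to the tuple then yields
\[
\sum_{i=1}^s a_i\,\pi(\partial_{b_m a_i}(S))=\pi(\partial_{b_m}(S))=0.
\]

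For the spanning assertion, I would take $(x_i)\in\ker\alpha_k$, lift each $x_i$ to $\tilde x_i\in e_{t(a_i)}\idealM e_\ell$, and form $z:=\sum_i a_i\tilde x_i\in J(S)\cap e_k\idealM e_\ell$. The finite-dimensionality of $P_{(Q,S)}(\ell)$ provides an integer $N_0$ with $\idealM^{N_0}e_\ell\subseteq J(S)e_\ell$, which by continuity of $\pi_i$ and $\pi$ allows me to replace $z$, without changing $\pi\circ\pi_i$, by a finite $\bbC$-linear combination
\[
z_N=\sum_{c,\alpha}u_{c,\alpha}\,\partial_c(S)\,v_{c,\alpha},\qquad u_{c,\alpha}\in e_k\RQ{Q}e_{t(c)},\ v_{c,\alpha}\in e_{h(c)}\RQ{Q}e_\ell.
\]
I then compute $\pi\circ\pi_i$ of each summand according to $c$:
\begin{enumerate}
\item If $c$ is not incident to $k$, or $c=a_{i_0}$ for some $i_0$: then $t(c)\ne k$ forces $u$ to have length $\ge 1$. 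A last-arrow decomposition $u=\sum_{i'}a_{i'}\hat u_{i'}$ gives $\pi_{i'}(u\,\partial_c(S)\,v)=\hat u_{i'}\,\partial_c(S)\,v\in J(S)$, so the contribution vanishes modulo $J(S)$.
\item If $c=b_m$ with $m>\overline{r}$: then $v$ automatically has length $\ge 1$. Split $u=\lambda_u e_k+u^{(>0)}$; the $u^{(>0)}$-part vanishes after $\pi$ by the same last-arrow argument (since $\partial_{b_m}(S)\in J(S)$), while the $\lambda_u e_k$-part yields $\lambda_u\,\pi(\partial_{b_m a_i}(S))\cdot\pi(v)$, an element of $\image\gamma_k$ (its preimage has $m$-th coordinate $\lambda_u\pi(v)\in M_{h(b_m)}$ and the rest zero).
\item If $c=b_m$ with $m\le\overline{r}$: also split $v=\mu_v e_\ell+v^{(>0)}$. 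Every cross-term involving $u^{(>0)}$ vanishes modulo $J(S)$; the term with $u=\lambda_u e_k,\ v=v^{(>0)}$ sits in $\image\gamma_k$ as in case~(2); and only the scalar term $u=\lambda_u e_k,\ v=\mu_v e_\ell$ contributes to $\pi\circ\pi_i$, producing $\lambda_u\mu_v\,\pi(\partial_{b_m a_i}(S))$, the advertised spanning element.
\end{enumerate}
Summing, $(x_i)_i$ is congruent modulo $\image\gamma_k$ to a $\bbC$-linear combination of the tuples $\bigl(\pi(\partial_{b_m a_i}(S))\bigr)_i$ with $m=1,\ldots,\overline{r}$, finishing the spanning claim.

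The principal obstacle I foresee is the interplay between the $\idealM$-adic closure in the definition of $J(S)$ and the discrete bookkeeping of the case analysis; the finite-dimensionality hypothesis is precisely what forces the infinite tails of the presentation of $z$ to die after passing to the Jacobian algebra, so the case analysis can be performed on a genuinely finite sum. The only other identity needed is $\partial_{b_m}(S)=\sum_i a_i\partial_{b_m a_i}(S)$, which is formal, together with the trivial observation that $\partial_c(S)\in J(S)$ for every arrow $c$.
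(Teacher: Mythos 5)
Your proof is correct, and its computational core is the same as the paper's: both start from the identity $\partial_{b_m}(S)=\sum_{i=1}^s a_i\,\partial_{b_m a_i}(S)$, both split the left and right factors of each summand $u\,\partial_c(S)\,v$ into degree-zero and positive-degree parts, and both sort the resulting contributions into the three bins ``lies in $J(S)$,'' ``lies in $\image\gamma_k$,'' and ``scalar multiple of a spanning tuple.'' Your treatment of the first assertion fills in the 2-cycle observation that the paper labels ``obvious.''

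Where you genuinely diverge is in how the infinite tail of an element of $J(S)$ is disposed of. The paper works with the full $\idealM$-adic limit and invokes closedness results from DWZ (their Lemmas 13.2, 13.4, 13.6) to conclude that the relevant span is a closed subspace, hence contains the limit vector $(p_1,\ldots,p_s)$; that argument never uses the hypothesis that $P_{(Q,S)}(\ell)$ is finite-dimensional. You instead use finite-dimensionality (nilpotency, $\idealM^{N_0}e_\ell\subseteq J(S)e_\ell$) together with the fact that the component-projections $\pi_i$ lower degree by exactly one, to replace the limit by a single finite truncation $z_N$ on which the case analysis is performed exactly, with no closure lemmas needed. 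This buys you a more elementary, purely finitary argument at the cost of consuming a hypothesis the paper's proof of this particular proposition leaves untouched; since the standing assumptions of the section do include that hypothesis, the trade-off is harmless here, but it does make the proposition look less self-contained than it actually is.
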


\begin{proof} It is obvious that the elements $(\partial_{b_1a_i}(S))_{i=1}^s,\ldots,(\partial_{b_{\bar{r}}a_i}(S))_{i=1}^s\in \bigoplus_{i=1}^s e_{t(a_i)}\pi(\idealM) e_\ell$ belong to $\ker\alpha_k$.

Take any element $(p_1,\ldots,p_s)\in\bigoplus_{a\in Q_1:h(a)=k} e_{t(a)} \idealM e_\ell$ such that $a_1p_1+\cdots+a_sp_s\in J(S)$, so it is the $\idealM$-adic limit of some sequence $\left(\xi_n\right)_{n\in\bbZ_{>0}}$ of elements
$$
\xi_n
=
\sum_{\omega\in Q_1}\sum_{q}\lambda_{n,\omega,q}\partial_{\omega}(S)\rho_{n,\omega,q}
$$
of $\RQ{Q}$,
with $\lambda_{n,\omega,q}\in e_k\RQ{Q}e_{t(\omega)}$ and $\rho_{n,\omega,q}\in e_{h(\omega)}\RQ{Q}e_{\ell}$, where each sum $\sum_{q}\lambda_{n,\omega,q}\partial_{\omega}(S)\rho_{n,\omega,q}$ has only finitely many summands.

For each $n$, each $\omega\in Q_1$ and each $q$, write
$$
\lambda_{n,\omega,q}=\lambda_{n,\omega,q}^{(0)}+ \lambda_{n,\omega,q}^{(>0)}
$$
with $\lambda_{n,\omega,q}^{(0)}=x_{n,\omega,q}e_k\in\mathbb{C}e_k$ the degree-$0$ component of $\lambda_{n,\omega,q}$ in the complete path algebra $\RQ{Q}$, and $\lambda_{n,\omega,q}^{(>0)}:=\lambda_{n,\omega,q}-\lambda_{n,\omega,q}^{(0)}\in e_k\idealM e_{t(\omega)}$. This way,
$$
\xi_n=
\sum_{\omega\in Q_1}\sum_q\lambda_{n,\omega,q}\partial_{\omega}(S)\rho_{n,\omega,q}=
\sum_{\omega\in Q_1}\sum_q\left(\lambda_{n,\omega,q}^{(0)}\partial_{\omega}(S)\rho_{n,\omega,q}+
\lambda_{n,\omega,q}^{(>0)}\partial_{\omega}(S)\rho_{n,\omega,q}\right).
$$
For each term of the form $\lambda_{n,\omega,q}^{(0)}\partial_{\omega}(S)\rho_{n,\omega,q}$ with $\lambda_{n,\omega,q}^{(0)}\neq 0$
we have $k=t(\omega)$, hence $\omega\in\{b_1,\ldots,b_r\}$, say $\omega=b_j$, and 
$$
\lambda_{n,\omega,q}^{(0)}\partial_{\omega}(S)\rho_{n,\omega,q}=
\lambda_{n,b_j,q}^{(0)}\partial_{b_j}(S)\rho_{n,b_j,q}=\lambda_{n,b_j,q}^{(0)}\sum_{i=1}^sa_i\partial_{b_ja_i}(S)\rho_{n,b_j,q}.
$$
Furthermore, since $\rho_{n,\omega,q}\in e_{h(\omega)}\RQ{Q}e_{\ell}$, we see that the degree-$0$ component of $\rho_{n,\omega,q}$ in the complete path algebra $\RQ{Q}$ is $0$ for $j=\overline{r}+1,\ldots,r$. 
Thus, denoting by $\rho_{n,b_j,q}^{(0)}:=y_{n,b_j,q}e_{h(b_j)}\in\bbC e_{h(b_j)}$ the degree-$0$ component of $\rho_{n,b_j,q}$ in the complete path algebra $\RQ{Q}$ for every $j=1,\ldots,r$, and writing $\rho_{n,b_j,q}^{(>0)}:=\rho_{n,b_j,q}-\rho_{n,b_j,q}^{(0)}$, for $j=\overline{r}+1,\ldots,r$ we have $\rho_{n,b_j,q}^{(0)}=0$.

On the other hand, for every $\omega\in Q_1$, the element $\lambda_{n,\omega,q}^{(>0)}$ can be written as $\lambda_{n,\omega,q}^{(>0)}=\sum_{i=1}^s a_i\nu_{n,\omega,q,i}$ with $\nu_{n,\omega,q,i}\in e_{t(a_i)}\RQ{Q}e_{t(\omega)}$. And for $\omega\in Q_1\setminus\{b_1,\ldots,b_r\}$, we have $t(\omega)\neq k$, hence $\lambda_{n,\omega,q}=\lambda_{n,\omega,q}^{(>0)}$.
Therefore, 
\begin{align*}
\xi_n
&=
\sum_{j=1}^{r}\sum_q\left(\lambda^{(0)}_{n,b_j,q}\sum_{i=1}^sa_i\partial_{b_ja_i}(S)\rho_{n,b_j,q}+\sum_{i=1}^s a_i\nu_{n,b_j,q,i}\partial_{b_j}(S)\rho_{n,b_j,q}\right)
+
\sum_{\omega\in Q_1\setminus\{b_1,\ldots,b_r\}}\sum_q\left(\sum_{i=1}^s a_i\nu_{n,\omega,q,i}\partial_{\omega}(S)\rho_{n,\omega,q}\right)\\
&=
\sum_{i=1}^sa_i\left(\sum_{j=1}^{r}\sum_q\left(x_{n,b_j,q}\partial_{b_ja_i}(S)\rho_{n,b_j,q}+\nu_{n,b_j,q,i}\partial_{b_j}(S)\rho_{n,b_j,q}\right)
+
\sum_{\omega\in Q_1\setminus\{b_1,\ldots,b_r\}}\sum_q\left(\nu_{n,\omega,q,i}\partial_{\omega}(S)\rho_{n,\omega,q}\right)\right)\\
&=
\sum_{i=1}^sa_i\left(\sum_{j=1}^{r}\sum_q\left(x_{n,b_j,q}\partial_{b_ja_i}(S)\rho_{n,b_j,q}\right)
+
\sum_{\omega\in Q_1}\sum_q\left(\nu_{n,\omega,q,i}\partial_{\omega}(S)\rho_{n,\omega,q}\right)\right)\\
&=
\sum_{i=1}^sa_i\left(\sum_{j=1}^{r}\sum_q\left(x_{n,b_j,q}\partial_{b_ja_i}(S)(\rho_{n,b_j,q}^{(0)}+\rho_{n,b_j,q}^{(>0)})\right)
+
\sum_{\omega\in Q_1}\sum_q\left(\nu_{n,\omega,q,i}\partial_{\omega}(S)\rho_{n,\omega,q}\right)\right)\\
&=
\sum_{i=1}^sa_i\left(\sum_{j=1}^{r}\sum_q\left(x_{n,b_j,q}\partial_{b_ja_i}(S)\rho_{n,b_j,q}^{(0)}+x_{n,b_j,q}\partial_{b_ja_i}(S)\rho_{n,b_j,q}^{(>0)}\right)
+
\sum_{\omega\in Q_1}\sum_q\left(\nu_{n,\omega,q,i}\partial_{\omega}(S)\rho_{n,\omega,q}\right)\right)\\
&=
\sum_{i=1}^sa_i\left(\sum_{j=1}^{r}\sum_q\left(x_{n,b_j,q}\partial_{b_ja_i}(S)\rho_{n,b_j,q}^{(0)}\right)+\sum_{j=1}^{r}\sum_q\left(x_{n,b_j,q}\partial_{b_ja_i}(S)\rho_{n,b_j,q}^{(>0)}\right)
+
\sum_{\omega\in Q_1}\sum_q\left(\nu_{n,\omega,q,i}\partial_{\omega}(S)\rho_{n,\omega,q}\right)\right)\\
&=
\sum_{i=1}^sa_i\left(\sum_{j=1}^{\overline{r}}\sum_q\left(x_{n,b_j,q}y_{n,b_j,q}\partial_{b_ja_i}(S)\right)+\sum_{j=1}^{r}\sum_q\left(x_{n,b_j,q}\partial_{b_ja_i}(S)\rho_{n,b_j,q}^{(>0)}\right)
+
\sum_{\omega\in Q_1}\sum_q\left(\nu_{n,\omega,q,i}\partial_{\omega}(S)\rho_{n,\omega,q}\right)\right).
\end{align*}

Take any $\varepsilon\in\bbZ_{>0}$. Then there exists $N\in\bbZ_{>0}$ such that for all $n>N$ we have
$$
a_1p_1+\cdots+a_sp_s-\xi_n\in\mathfrak{m}^\varepsilon,
$$
and hence, for every $i=1,\ldots,s$, we have
$$
p_i-\left(\sum_{j=1}^{\overline{r}}\sum_q\left(x_{n,b_j,q}y_{n,b_j,q}\partial_{b_ja_i}(S)\right)+\sum_{j=1}^{r}\sum_q\left(x_{n,b_j,q}\partial_{b_ja_i}(S)\rho_{n,b_j,q}^{(>0)}\right)
+
\sum_{\omega\in Q_1}\sum_q\left(\nu_{n,\omega,q,i}\partial_{\omega}(S)\rho_{n,\omega,q}\right)\right)\in\idealM^{\varepsilon-1}.
$$

Thus, the vector 
$$
\left(\begin{array}{c}p_1\\ \vdots \\
p_i\\
\vdots \\ p_s\end{array}\right)
- 
\sum_{j=1}^{\overline{r}}\sum_qx_{n,b_j,q}y_{n,b_j,q}\left(\begin{array}{c}\partial_{b_ja_1}(S)\\
\vdots \\
\partial_{b_ja_i}(S)\\
\vdots \\
\partial_{b_ja_s}(S)
\end{array}\right)
-
\sum_{j=1}^{r}\sum_qx_{n,b_j,q}\left(\begin{array}{c}\partial_{b_ja_1}(S)\rho_{n,b_j,q}^{(>0)}\\
\vdots \\
\partial_{b_ja_i}(S)\rho_{n,b_j,q}^{(>0)}\\
\vdots\\
\partial_{b_ja_s}(S)\rho_{n,b_j,q}^{(>0)}\end{array}\right)
-
\sum_{\omega\in Q_1}\sum_q\left(\begin{array}{c}\nu_{n,\omega,q,1}\partial_{\omega}(S)\rho_{n,\omega,q}\\
\vdots\\
\nu_{n,\omega,q,i}\partial_{\omega}(S)\rho_{n,\omega,q}\\
\vdots\\
\nu_{n,\omega,q,s}\partial_{\omega}(S)\rho_{n,\omega,q}\end{array}\right),
$$
belongs to $\underset{s}{\underbrace{\idealM^{\varepsilon-1}\times \cdots\times \idealM^{\varepsilon-1}}}$, 
which means that in $\bigoplus_{a\in Q_1:h(a)=k} e_{t(a)} \idealM e_\ell$,
the sequence
$$
\left(\sum_{j=1}^{\overline{r}}\sum_qx_{n,b_j,q}y_{n,b_j,q}\left(\begin{array}{c}\partial_{b_ja_1}(S)\\
\vdots \\
\partial_{b_ja_i}(S)\\
\vdots \\
\partial_{b_ja_s}(S)
\end{array}\right)
+
\sum_{j=1}^{r}\sum_qx_{n,b_j,q}\left(\begin{array}{c}\partial_{b_ja_1}(S)\rho_{n,b_j,q}^{(>0)}\\
\vdots \\
\partial_{b_ja_i}(S)\rho_{n,b_j,q}^{(>0)}\\
\vdots\\
\partial_{b_ja_s}(S)\rho_{n,b_j,q}^{(>0)}\end{array}\right)
+
\sum_{\omega\in Q_1}\sum_q\left(\begin{array}{c}\nu_{n,\omega,q,1}\partial_{\omega}(S)\rho_{n,\omega,q}\\
\vdots\\
\nu_{n,\omega,q,i}\partial_{\omega}(S)\rho_{n,\omega,q}\\
\vdots\\
\nu_{n,\omega,q,s}\partial_{\omega}(S)\rho_{n,\omega,q}\end{array}\right)\right)_{n\in\mathbb{Z}_{>0}}
$$
converges to 
\begin{equation}\label{eq:vector-of-ps}
\left(\begin{array}{c}p_1\\ \vdots \\
p_i\\
\vdots \\ p_s\end{array}\right).
\end{equation}

Now, the $\bbC$-linear map
\begin{equation}\label{eq:gamma-for-Jac-alg}
\left[\begin{array}{ccc}
 \partial_{b_1a_1}(S) \cdot & \cdots & \partial_{b_r a_1}(S)\cdot \\
 \vdots & \ddots & \vdots\\
 \partial_{b_1a_s}(S) \cdot & \cdots & \partial_{b_ra_s}(S)\cdot
 \end{array}\right]:\bigoplus_{b\in Q_1:t(b)=k} e_{h(b)} \idealM e_\ell\rightarrow \bigoplus_{a\in Q_1:h(a)=k} e_{t(a)} \idealM e_\ell
\end{equation}
is continuous, so by \cite[Lemma 13.6]{DWZ1}, its image is a closed $\bbC$-vector subspace of $\bigoplus_{a\in Q_1:h(a)=k} e_{t(a)} \idealM e_\ell$. On the other hand, $\bigoplus_{a\in Q_1:h(a)=k} e_{t(a)} J(S) e_\ell$ is a closed $\bbC$-vector subspace of $\bigoplus_{a\in Q_1:h(a)=k} e_{t(a)} \idealM e_\ell$, whereas the $\bbC$-vector subspace of $\bigoplus_{a\in Q_1:h(a)=k} e_{t(a)} \idealM e_\ell$ spanned by the set
\begin{equation}\label{eq:set-of-2nd-order-cyclic-derivatives}
\left\{\left(\begin{array}{c}\partial_{b_1a_1}(S)\\
\vdots \\
\partial_{b_1a_i}(S)\\
\vdots \\
\partial_{b_1a_s}(S)
\end{array}\right),\ldots,\left(\begin{array}{c}\partial_{b_{\overline{r}}a_1}(S)\\
\vdots \\
\partial_{b_{\overline{r}}a_i}(S)\\
\vdots \\
\partial_{b_{\overline{r}}a_s}(S)
\end{array}\right)\right\}
\end{equation}
is closed by \cite[Lemma 13.2]{DWZ1}.

Therefore, by \cite[Lemma 13.4]{DWZ1}, the $\bbC$-vector subspace of $\bigoplus_{a\in Q_1:h(a)=k} e_{t(a)} \idealM e_\ell$ spanned by the union of $\bigoplus_{a\in Q_1:h(a)=k} e_{t(a)} J(S) e_\ell$, the image of the map \eqref{eq:gamma-for-Jac-alg}, and the set \eqref{eq:set-of-2nd-order-cyclic-derivatives}, is closed. It hence contains the vector \eqref{eq:vector-of-ps}. This finishes the proof of Proposition \ref{prop:spanning ker a/ker gamma for radical}.
\end{proof}

Set $\widetilde{\idealM}$ to be the $2$-sided ideal of $\RQ{\widetilde{\mu}_k(Q)}$ generated by the arrows of $\widetilde{\mu}_k(Q)$, and denote by $\widetilde{\pi}:\RQ{\widetilde{\mu}_k(Q)}\rightarrow\jacobalg{\widetilde{\mu}_k(Q),\widetilde{\mu}_k(S)}$ the projection to the Jacobian algebra.

Consider the $\bbC$-linear maps
\begin{align}\label{eq:bracket-isomorphisms}
& [-]:\bigoplus_{a\in Q_1:h(a)=k} e_{t(a)}\pi(\idealM) e_\ell\rightarrow \bigoplus_{a\in Q_1:h(a)=k} e_{t(a)}\cdot \widetilde{\pi}(\widetilde{\idealM})\cdot e_\ell\\
\nonumber
\text{and} \ \ & [-]:\bigoplus_{b\in Q_1:t(b)=k} e_{h(b)}\pi(\idealM) e_\ell\rightarrow \bigoplus_{b\in Q_1:t(b)=k} e_{h(b)}\cdot \widetilde{\pi}(\widetilde{\idealM})\cdot e_\ell
\end{align}
given by replacing each 2-path $b_{j}a_i$ of $Q$ with the arrow $[b_ja_i]$ of $\widetilde{\mu}_k(Q)$ in the expression of each path of $Q$ not starting nor ending at $k$. By \cite[Proposition~6.1]{DWZ1}, these two maps are bijective. In the proof of the following proposition we will think of them as identifications.

\begin{prop}\label{prop:alpha-beta-gamma-maps-for-radicals}
The $\bbC$-linear maps
$$
\xymatrix{
 & e_k\pi(\idealM)e_\ell  \ar[dr]^{\beta:=\left[\begin{array}{c}b_1\cdot \\ \vdots \\ b_r\cdot \end{array}\right]} & \\
 \bigoplus_{a\in Q_1:h(a)=k} e_{t(a)} \pi(\idealM) e_\ell \ar[ur]^{\alpha:=\left[\begin{array}{ccc}a_1\cdot  & \cdots & a_s\cdot \end{array}\right]\qquad} & & \bigoplus_{b\in Q_1:t(b)=k} e_{h(b)} \pi(\idealM)e_\ell \ar[ll]^{\gamma:=\text{ {\tiny $\left[\begin{array}{ccc}
 \partial_{b_1a_1}(S) \cdot & \cdots & \partial_{b_r a_1}(S)\cdot \\
 \vdots & \ddots & \vdots\\
 \partial_{b_1a_s}(S) \cdot & \cdots & \partial_{b_ra_s}(S)\cdot
 \end{array}\right]$}}}
 }
$$
and
$$
\xymatrix{
 & e_{k}\cdot \widetilde{\pi}(\widetilde{\idealM})\cdot e_\ell \ar[dl]_{\mathfrak{b}:=\left[\begin{array}{c}a_1^*\cdot \\ \vdots \\ a_s^*\cdot \end{array}\right]\quad}& \\
 \bigoplus_{a\in Q_1:h(a)=k} e_{t(a)}\cdot \widetilde{\pi}(\widetilde{\idealM})\cdot e_\ell
 \ar[rr]_{\mathfrak{c}:=\text{ {\tiny $\left[\begin{array}{ccc}\partial_{a_1^*b_1^*}(\widetilde{\mu}_k(S)) \cdot & \cdots & \partial_{a_s^*b_1^*}(\widetilde{\mu}_k(S))\cdot \\
 \vdots & \ddots & \vdots\\
 \partial_{a_1^*b_r^*}(\widetilde{\mu}_k(S)) \cdot & \cdots & \partial_{a_s^*b_r^*}(\widetilde{\mu}_k(S))\cdot
 \end{array}\right]$}}}
 & & \bigoplus_{b\in Q_1:t(b)=k} e_{h(b)}\cdot \widetilde{\pi}(\widetilde{\idealM})\cdot e_\ell \ar[ul]_{\qquad\mathfrak{a}:=\left[\begin{array}{ccc}b_1^*\cdot  & \cdots & b_r^*\cdot\end{array}\right]} 
}
$$
satisfy $\image\mathfrak{c}\subseteq\image\beta\alpha$, $\image\gamma\subseteq\image\mathfrak{ba}$, $\ker\mathfrak{a}=\image\beta$ and $\image\mathfrak{b}\subseteq \ker\alpha$.
\end{prop}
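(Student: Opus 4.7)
The plan is to prove each of the four inclusions by direct computation, using explicit cyclic derivatives of
$$\widetilde{\mu}_k(S)=[S]+\sum_{i,j}b_j^*[b_ja_i]a_i^*$$
together with the first-order Jacobi relations for $(Q,S)$. The key preliminary step is to record the formulas
$$
\partial_{a_i^*b_j^*}(\widetilde{\mu}_k(S))=[b_ja_i],\qquad
\partial_{[b_ja_i]}(\widetilde{\mu}_k(S))=[\partial_{b_ja_i}(S)]+a_i^*b_j^*,
$$
$$
\partial_{a_i^*}(\widetilde{\mu}_k(S))=\sum_{j=1}^{r}b_j^*[b_ja_i],\qquad
\partial_{b_j^*}(\widetilde{\mu}_k(S))=\sum_{i=1}^{s}[b_ja_i]a_i^*,
$$
and, on the original side,
$$
\partial_{a_i}(S)=\sum_{j=1}^{r}\partial_{b_ja_i}(S)\,b_j,\qquad
\partial_{b_j}(S)=\sum_{i=1}^{s}a_i\,\partial_{b_ja_i}(S).
$$
Passing to the respective Jacobian algebras, these yield $a_i^*b_j^*=-[\partial_{b_ja_i}(S)]$, $\sum_{j}b_j^*[b_ja_i]=0$, $\sum_{i}[b_ja_i]a_i^*=0$, $\sum_{j}\partial_{b_ja_i}(S)b_j=0$, and $\sum_{i}a_i\partial_{b_ja_i}(S)=0$.

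For claims (1) and (2), the first identity shows that the $(j,i)$-entry of $\mathfrak{c}$ is left-multiplication by $[b_ja_i]$, which under the identifications \eqref{eq:bracket-isomorphisms} corresponds precisely to the $(j,i)$-entry of $\beta\alpha$; hence $\image\mathfrak{c}=[\image\beta\alpha]$. Analogously, $a_i^*b_j^*=-[\partial_{b_ja_i}(S)]$ shows that $\mathfrak{ba}$ coincides entry-by-entry with $-[\gamma]$, so $\image\mathfrak{ba}=[\image\gamma]$.

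For claim (4), note that the only arrows of $\widetilde{\mu}_k(Q)$ ending at $k$ are $b_1^*,\ldots,b_r^*$, so any $q\in e_k\widetilde{\pi}(\widetilde{\idealM})e_\ell$ admits a decomposition $q=\sum_j b_j^*\overline{q}_j$ with $\overline{q}_j\in e_{h(b_j)}\widetilde{\pi}(\RQ{\widetilde{\mu}_k(Q)})e_\ell$. Using $a_i^*b_j^*=-[\partial_{b_ja_i}(S)]$, one rewrites $\mathfrak{b}(q)_i=a_i^*q=-\sum_j[\partial_{b_ja_i}(S)]\overline{q}_j$; pulling this back via $[-]^{-1}$ yields $-\sum_j\partial_{b_ja_i}(S)q'_j\in e_{t(a_i)}\pi(\idealM)e_\ell$ for suitable $q'_j\in e_{h(b_j)}\pi(\RQ{Q})e_\ell$. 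Applying $\alpha$ and using $\sum_i a_i\partial_{b_ja_i}(S)=\partial_{b_j}(S)=0$ in $\jacobalg{Q,S}$ then gives $\alpha\circ\mathfrak{b}(q)=-\sum_j\partial_{b_j}(S)q'_j=0$, whence $\image\mathfrak{b}\subseteq\ker\alpha$.

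Claim (3) consists of two inclusions. The inclusion $\image\beta\subseteq\ker\mathfrak{a}$ is direct: decomposing $p\in e_k\pi(\idealM)e_\ell$ as $p=\sum_i a_ip_i$ gives $\mathfrak{a}([\beta(p)])=\sum_j b_j^*[b_jp]=\sum_i\bigl(\sum_j b_j^*[b_ja_i]\bigr)[p_i]=0$. The main obstacle is the converse inclusion $\ker\mathfrak{a}\subseteq\image\beta$. The plan is to mirror the strategy of Proposition \ref{prop:spanning ker a/ker gamma for radical}: given $(q_j)_j$ with $\sum_j b_j^*q_j=0$ in $\jacobalg{\widetilde{\mu}_k(Q),\widetilde{\mu}_k(S)}$, lift to representatives $(\widetilde{q}_j)_j$ in $\RQ{\widetilde{\mu}_k(Q)}e_\ell$, express $\sum_j b_j^*\widetilde{q}_j$ as an $\widetilde{\idealM}$-adic limit of elements of $J(\widetilde{\mu}_k(S))e_\ell$, split each summand according to the type of arrow of $\widetilde{\mu}_k(Q)$ whose cyclic derivative is involved (arrows of $Q$ disjoint from $k$, the bracket arrows $[b_ja_i]$, or the starred arrows $a_i^*,b_j^*$) and according to the degree-zero components of its coefficients, collapse using the identities from the first step, and invoke the closedness results \cite[Lemmas 13.2, 13.4, 13.6]{DWZ1} to control the passage to the limit. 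The residual terms should then assemble into $[\beta(p)]$ for some $p\in e_k\pi(\idealM)e_\ell$, completing the proof.
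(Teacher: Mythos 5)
Your arguments for $\image\mathfrak{c}\subseteq\image\beta\alpha$, $\image\gamma\subseteq\image\mathfrak{ba}$, $\image\beta\subseteq\ker\mathfrak{a}$, and $\image\mathfrak{b}\subseteq\ker\alpha$ coincide with the paper's; the cyclic-derivative identities you list are exactly the right preliminary observations, and it is a harmless strengthening that you in fact obtain the equalities $\image\mathfrak{c}=[\image(\beta\alpha)]$ and $\image(\mathfrak{b}\mathfrak{a})=[\image\gamma]$ under the bracket identifications. The genuine divergence is the inclusion $\ker\mathfrak{a}\subseteq\image\beta$. You propose to re-run the $\idealM$-adic limit argument of Proposition~\ref{prop:spanning ker a/ker gamma for radical} inside $\RQ{\widetilde{\mu}_k(Q)}$ from scratch; this would work (the appeal to the closedness lemmas of \cite{DWZ1} is the right tool, and the residual terms do land in $[\image\beta]$), but it is much longer than what the paper does, which is simply to apply Proposition~\ref{prop:spanning ker a/ker gamma for radical} \emph{verbatim} to the premutated QP $(\widetilde{\mu}_k(Q),\widetilde{\mu}_k(S))$, the arrows $b_1^*,\dots,b_r^*$ and $a_1^*,\dots,a_s^*$ now playing the roles of the $a_i$'s and $b_j$'s and $\widetilde{\mu}_k(Q)$ having no $2$-cycles at $k$ precisely because $Q$ has none. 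That yields at once that $\ker\mathfrak{a}$ is, modulo $\image\mathfrak{c}$, spanned by the tuples $(\partial_{a_i^*b_j^*}(\widetilde{\mu}_k(S)))_{j=1}^r=([b_ja_i])_{j=1}^r=[\beta(a_i)]$ for $i\le\overline{s}$ (here $a_i\in e_k\pi(\idealM)e_\ell$ since $t(a_i)=\ell$), and combining this with $\image\mathfrak{c}\subseteq[\image(\beta\alpha)]\subseteq[\image\beta]$ from your claim (1) finishes in two lines. You have essentially rediscovered this argument under a different guise, since ``mirroring the strategy'' of Proposition~\ref{prop:spanning ker a/ker gamma for radical} for $(\widetilde{\mu}_k(Q),\widetilde{\mu}_k(S))$ is nothing other than applying it; making that identification explicit is both shorter and makes your final step, where you only assert that ``the residual terms should then assemble into $[\beta(p)]$,'' fully precise.
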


\begin{proof}
Since  $\mathfrak{c}$ is defined as multiplying the elements of $\bigoplus_{i=1}^s e_{t(a)}\cdot \widetilde{\pi}(\widetilde{\idealM})\cdot e_{\ell}$ by the matrix formed with the elements $\partial_{a_i^*b_j^*}(\widetilde{\mu}_k(S))=[b_ja_i]$, we deduce that $\image\mathfrak{c}\subseteq\image(\beta\alpha)$ (even though some of the elements $[b_ja_i]$ themselves may fail to belong to $\image\mathfrak{c}$).

Similarly, since $\gamma$ is defined as multiplying the elements of
$\bigoplus_{j=1}^re_{h(b_j)}\pi(\idealM)e_\ell=
\bigoplus_{j=1}^re_{h(b_j)}\cdot\widetilde{\pi}(\widetilde{\idealM})\cdot e_\ell$ by the matrix formed with the elements
$\partial_{b_ja_i}(S)=\partial_{[b_ja_i]}(\widetilde{\mu}_k(S))-a_i^*\cdot b_j^*$, and since each $\partial_{[b_ja_i]}(\widetilde{\mu}_k(S))$ is zero in the Jacobian algebra of $\widetilde{\mu}_k(Q,S)$, we have $\image\gamma\subseteq\image\mathfrak{ba}$.

By Proposition \ref{prop:spanning ker a/ker gamma for radical}, every element of $\ker\mathfrak{a}$ is congruent, modulo $\image\mathfrak{c}$, to a $\bbC$-linear combination of the $r$-tuples
$(\partial_{a_1^*b_j^*}(\widetilde{\mu}_k(S)))_{j=1}^r,\ldots,(\partial_{a_{\overline{s}}^*b_j^*}(\widetilde{\mu}_k(S)))_{j=1}^r$. Noting that $(\partial_{a_1^*b_j^*}(\widetilde{\mu}_k(S)))_{j=1}^r=([b_ja_1])_{j=1}^r,\ldots,(\partial_{a_{\overline{s}}^*b_j^*}(\widetilde{\mu}_k(S)))_{j=1}^r=([b_ja_{\overline{s}}])_{j=1}^r$, we see that, although these $r$-tuples may fail to belong to $\image\mathfrak{c}$, they certainly belong to $\image\beta_k$. Since the inclusion $\image\mathfrak{c}\subseteq\image\beta\alpha$ has already been established, we deduce that $\ker\mathfrak{a}\subseteq\image\beta$.

Take now $(q_1,\ldots,q_r)\in\image\beta\subseteq \bigoplus_{j=1}^r e_{h(b_j)}\cdot\widetilde{\pi}(\widetilde{\idealM})\cdot e_{\ell}$. That is, $(q_1,\ldots,q_r)=(b_1p,\ldots,b_rp)$ for some $p\in e_k\pi(\idealM)e_\ell$. We can write $p=\sum_{i=1}^sa_ip_i$ for some $p_i\in  e_{t(a_i)}\pi(\RQ{Q})e_\ell=e_{t(a_i)}\cdot \pi(\RQ{\widetilde{\mu}_k(Q)})\cdot e_\ell$. Thus, 
$$
\left[\begin{array}{c}q_1\\
\vdots\\
q_r\end{array}\right]=\left[\begin{array}{c}\sum_{i=1}^sb_1a_ip_i\\ \vdots\\
\sum_{i=1}^sb_ra_ip_i\end{array}\right]=
\left[\begin{array}{ccc}\partial_{a_1^*b_1^*}(\widetilde{\mu}_k(S)) & \cdots & \partial_{a_s^*b_1^*}(\widetilde{\mu}_k(S))\\
\vdots & \ddots & \vdots\\
\partial_{a_1^*b_r^*}(\widetilde{\mu}_k(S)) & \cdots & \partial_{a_s^*b_r^*}(\widetilde{\mu}_k(S))
\end{array}\right]
\left[\begin{array}{c}p_1\\ \vdots \\ p_s\end{array}\right]
$$
belongs to $\ker\mathfrak{a}$ (even though some of the $p_i$ may fail to belong to $\pi(\mathfrak{m})$).

Take $(p_1,\ldots,p_s)\in\image\mathfrak{b}\subseteq\bigoplus_{i=1}^se_{t(a_i)}\cdot\widetilde{\pi}(\widetilde{\idealM})\cdot e_\ell$. That is, $(p_1,\ldots,p_s)=(a_1^*\cdot q,\ldots,a_s^*\cdot q)$ for some $q\in e_{k}\cdot\widetilde{\pi}(\widetilde{\idealM})\cdot e_\ell$. We can write $q=\sum_{j=1}^rb_j^*\cdot q_j$ for some $q_j\in e_{h(b_j)}\cdot \widetilde{\pi}(\RQ{\widetilde{\mu}_k}(Q))\cdot e_\ell=e_{h(b_j)}\pi(\RQ{Q})e_\ell$. Since the cyclic derivatives $\partial_{[b_ja_i]}(\widetilde{\mu}_k(S))$ are certainly zero in the Jacobian algebra of $\widetilde{\mu}_k(Q,S)$, we have:
\begin{align*}
\left[\begin{array}{c}p_1\\ \vdots\\ p_s\end{array}\right]&=
\left[\begin{array}{c}\sum_{j=1}^ra_1^*\cdot b_j^*\cdot q_j\\ \vdots\\ \sum_{j=1}^ra_s^*\cdot b_j^*\cdot q_j\end{array}\right]\\
&=\left(\left[\begin{array}{ccc}\partial_{[b_1a_1]}(\widetilde{\mu}_k(S)) & \cdots & \partial_{[b_ra_1]}(\widetilde{\mu}(S))\\
\vdots & \ddots & \vdots\\
\partial_{[b_1a_s]}(\widetilde{\mu}_k(S)) & \cdots & \partial_{[b_ra_s]}(\widetilde{\mu}(S))\end{array}\right]-\left[\begin{array}{ccc}\partial_{b_1a_1}(S) & \cdots & \partial_{b_ra_1}(S)\\
\vdots & \ddots & \vdots\\
\partial_{b_1a_s}(S) & \cdots & \partial_{b_ra_s}(S)\end{array}\right]\right)\left[\begin{array}{ccc}q_1\\ \vdots \\ q_r\end{array}\right]\\
&=-\left[\begin{array}{ccc}\partial_{b_1a_1}(S) & \cdots & \partial_{b_ra_1}(S)\\
\vdots & \ddots & \vdots\\
\partial_{b_1a_s}(S) & \cdots & \partial_{b_ra_s}(S)\end{array}\right]\left[\begin{array}{ccc}q_1\\ \vdots \\ q_r\end{array}\right]\in\ker\alpha.
\end{align*}
\end{proof}

Next, we study the behavior of the $\alpha$-$\beta$-$\gamma$ linear maps arising from the projectives $P_{(Q,S)}(\ell)$ and $P_{\widetilde{\mu}_k(Q,S)(\ell)}$.

\begin{prop}\label{prop:alpha-beta-gamma-maps-for-projective} The $\bbC$-linear maps 
$$
\xymatrix{
 & e_k\jacobalg{Q,S}e_\ell  \ar[dr]^{\widehat{\beta}:=\left[\begin{array}{c}b_1\cdot \\ \vdots \\ b_r\cdot \end{array}\right]} & \\
 \bigoplus_{a\in Q_1:h(a)=k} e_{t(a)} \jacobalg{Q,S} e_\ell \ar[ur]^{\widehat{\alpha}:=\left[\begin{array}{ccc}a_1\cdot  & \cdots & a_s\cdot \end{array}\right]\qquad} & & \bigoplus_{b\in Q_1:t(b)=k} e_{h(b)} \jacobalg{Q,S}e_\ell \ar[ll]^{\widehat{\gamma}:=\text{ {\tiny $\left[\begin{array}{ccc}
 \partial_{b_1a_1}(S) \cdot & \cdots & \partial_{b_r a_1}(S)\cdot \\
 \vdots & \ddots & \vdots\\
 \partial_{b_1a_s}(S) \cdot & \cdots & \partial_{b_ra_s}(S)\cdot
 \end{array}\right]$}}}
 }
$$
and
$$
\xymatrix{
 & e_{k}\cdot \jacobalg{\widetilde{\mu}_k(Q,S)}\cdot e_\ell \ar[dl]_{\widehat{\mathfrak{b}}:=\left[\begin{array}{c}a_1^*\cdot \\ \vdots \\ a_s^*\cdot \end{array}\right]\quad}& \\
 \bigoplus_{a\in Q_1:h(a)=k} e_{t(a)}\cdot \jacobalg{\widetilde{\mu}_k(Q,S)}\cdot e_\ell
 \ar[rr]_{\widehat{\mathfrak{c}}:=\text{ {\tiny $\left[\begin{array}{ccc}\partial_{a_1^*b_1^*}(\widetilde{\mu}_k(S)) \cdot & \cdots & \partial_{a_s^*b_1^*}(\widetilde{\mu}_k(S))\cdot \\
 \vdots & \ddots & \vdots\\
 \partial_{a_1^*b_r^*}(\widetilde{\mu}_k(S)) \cdot & \cdots & \partial_{a_s^*b_r^*}(\widetilde{\mu}_k(S))\cdot
 \end{array}\right]$}}}
 & & \bigoplus_{b\in Q_1:t(b)=k} e_{h(b)}\cdot \jacobalg{\widetilde{\mu}_k(Q,S)}\cdot e_\ell \ar[ul]_{\qquad\widehat{\mathfrak{a}}:=\left[\begin{array}{ccc}b_1^*\cdot  & \cdots & b_r^*\cdot\end{array}\right]} 
}
$$
satisfy
$\image\widehat{\beta}=\image\beta$, $\image\widehat{\mathfrak{b}}=\image\mathfrak{b}$, 
$\ker\widehat{\mathfrak{a}}=\ker\mathfrak{a}$,
$\ker\widehat{\alpha}=\ker\alpha=\image\widehat{\gamma}$ and $\coker\widehat{\beta}=(\bbC e_{\ell})^{\overline{r}}\oplus\coker\beta$.
\end{prop}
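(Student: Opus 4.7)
The plan is to derive all five claims from a single vector-space decomposition: since $J(S)\subseteq\idealM$ and $\RQ{Q}=R\oplus\idealM$, for every vertex $j\in Q_0$ one has
\[
e_j\jacobalg{Q,S}e_\ell \;=\; \delta_{j,\ell}\bbC e_\ell \;\oplus\; e_j\pi(\idealM)e_\ell,
\]
with the analogous decomposition for $\jacobalg{\widetilde{\mu}_k(Q,S)}$. I would open the proof by stating this splitting and tracking exactly where the ``new'' $\bbC e_\ell$-summands appear: in the codomain of $\widehat{\beta}$ and the domain of $\widehat{\gamma}$ they occur in each position $j\leq\overline{r}$ (those $b_j$ with $h(b_j)=\ell$); in the domain of $\widehat{\alpha}$ and of $\widehat{\mathfrak{a}}$ they occur in positions $i\leq\overline{s}$ and $j\leq\overline{r}$, respectively. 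This bookkeeping immediately reduces each claim to a finite question.

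Next I would dispose of the two image equalities $\image\widehat{\beta}=\image\beta$ and $\image\widehat{\mathfrak{b}}=\image\mathfrak{b}$ together with the cokernel formula. Since $k\ne\ell$, the domains of $\widehat{\beta},\widehat{\mathfrak{b}}$ are the same as those of $\beta,\mathfrak{b}$, and left-multiplication by any arrow carries $\pi(\idealM)$ into $\pi(\idealM)$ (respectively $\widetilde{\pi}(\widetilde{\idealM})$ into itself), so both images sit in the $\pi(\idealM)$-part of the codomain and coincide term-by-term with the un-hatted images. The cokernel identity $\coker\widehat{\beta}=(\bbC e_\ell)^{\overline{r}}\oplus\coker\beta$ is then immediate from the decomposition of the codomain and the fact that $\image\widehat{\beta}$ lives entirely in the $\pi(\idealM)$-summand.

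The heart of the argument is the pair of kernel equalities $\ker\widehat{\alpha}=\ker\alpha$ and $\ker\widehat{\mathfrak{a}}=\ker\mathfrak{a}$. For the second, I take $(x_je_\ell+y_j)_{j=1}^{\overline{r}}\sqcup(y_j)_{j=\overline{r}+1}^{r}\in\ker\widehat{\mathfrak{a}}$ and expand $\widehat{\mathfrak{a}}$ to obtain $\sum_{j=1}^{\overline{r}} x_jb_j^* + \sum_{j=1}^{r}b_j^*y_j=0$ in $e_k\jacobalg{\widetilde{\mu}_k(Q,S)}e_\ell$. Because each $y_j\in\widetilde{\pi}(\widetilde{\idealM})$, the second sum lies in $\widetilde{\pi}(\widetilde{\idealM}^2)$, forcing $\sum_{j=1}^{\overline{r}}x_jb_j^*\equiv 0$ modulo $\widetilde{\pi}(\widetilde{\idealM}^2)$. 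The hypothesis that $Q$ has no $2$-cycles incident at $k$ enters \emph{exactly here}: the degree-$1$ part of $J(\widetilde{\mu}_k(S))$ modulo $\widetilde{\idealM}^2$ is spanned by degree-$1$ cyclic derivatives of $\widetilde{\mu}_k(S)=[S]+\Delta_k(Q)$; those coming from $\Delta_k(Q)$ are of degree $2$, and those coming from $[S]$ involve only arrows of $Q$ (never any $b^*$), so no nontrivial $\bbC$-combination of $b_1^*,\ldots,b_{\overline{r}}^*$ can lie in that image. Hence every $x_j=0$, after which $(y_j)\in\ker\mathfrak{a}$. The equality $\ker\widehat{\alpha}=\ker\alpha$ is completely symmetric: one must exclude nontrivial $\bbC$-combinations of $a_1,\ldots,a_{\overline{s}}$ from the degree-$1$ part of $J(S)\bmod\idealM^2$, and any obstruction would be a $2$-cycle of $Q$ incident to $k$, contradicting the standing hypothesis.

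Finally, for $\image\widehat{\gamma}=\ker\alpha$, the inclusion $\image\widehat{\gamma}\subseteq\ker\widehat{\alpha}=\ker\alpha$ is $\widehat{\alpha}\widehat{\gamma}=0$ in the Jacobian algebra. For the reverse inclusion, the point is that $\widehat{\gamma}$ has the same matrix as $\gamma$ but a strictly larger domain, namely the extra $(\bbC e_\ell)^{\overline{r}}$. Evaluating $\widehat{\gamma}$ on the basis vector $e_\ell$ sitting in the $j$-th position with $j\leq\overline{r}$ produces precisely $(\partial_{b_ja_i}(S))_{i=1}^s$, so $\image\widehat{\gamma}$ equals $\image\gamma$ together with the $\bbC$-span of $\{(\partial_{b_ja_i}(S))_{i=1}^s : j=1,\ldots,\overline{r}\}$; by Proposition~\ref{prop:spanning ker a/ker gamma for radical} this is exactly $\ker\alpha$. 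The main obstacle in the whole argument is the degree-$1$ linear-independence statement of the previous paragraph; the rest is routine once the splitting from the first paragraph is in hand.
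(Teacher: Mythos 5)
Your proposal is correct and follows essentially the same route as the paper: the opening vector-space splitting $e_j\jacobalg{Q,S}e_\ell=\delta_{j,\ell}\bbC e_\ell\oplus e_j\pi(\idealM)e_\ell$, the immediate disposal of the image and cokernel identities, the degree-one analysis of the Jacobian ideal (using the absence of $2$-cycles at $k$) for both kernel equalities, and the invocation of Proposition~\ref{prop:spanning ker a/ker gamma for radical} for $\image\widehat{\gamma}=\ker\alpha$. The paper is terser in two spots where you spell things out more explicitly (the $\ker\widehat{\mathfrak{a}}$ case via $\widetilde{\mu}_k(S)=[S]+\Delta_k(Q)$, and the identification of $\image\widehat{\gamma}$ as $\image\gamma$ plus the span of the $(\partial_{b_ja_i}(S))_{i=1}^s$), but the underlying reasoning is identical.
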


\begin{proof}
Notice that
\begin{center}
    \begin{tabular}{lll}
& $e_k\jacobalg{Q,S}e_\ell=e_k\pi(\idealM)e_\ell$, & $e_{k}\cdot \jacobalg{\widetilde{\mu}_k(Q,S)}\cdot e_\ell=e_k\cdot\widetilde{\pi}(\widetilde{\idealM})\cdot e_\ell$,\\
for $i=1,\ldots,\overline{s}$:& $e_{t(a_i)}\jacobalg{Q,S}e_\ell=\mathbb{C}e_{\ell}\oplus e_{t(a_i)}\pi(\idealM)e_\ell$,& $e_{t(a_i)}\cdot \jacobalg{\widetilde{\mu}_k(Q,S)}\cdot e_\ell=\mathbb{C}e_{t(a_i)}\oplus e_{t(a_i)}\cdot\widetilde{\pi}(\widetilde{\idealM})\cdot e_\ell$,\\
for $j=1,\ldots,\overline{r}$:& $e_{h(b_j)}\jacobalg{Q,S}e_\ell=\mathbb{C}e_{\ell}\oplus e_{h(b_j)}\pi(\idealM)e_\ell$,& $e_{h(b_j)}\cdot \jacobalg{\widetilde{\mu}_k(Q,S)}\cdot e_\ell=\mathbb{C}e_{h(b_j)}\oplus e_{h(b_j)}\cdot\widetilde{\pi}(\widetilde{\idealM})\cdot e_\ell$.
\end{tabular}
\end{center}
Thus, the identities $\image\widehat{\beta}=\image\beta$ and $\image\widehat{\mathfrak{b}}=\image\mathfrak{b}$ are obvious.

We obviously have $\ker\alpha\subseteq\ker\widehat{\alpha}$. Take $u\in \ker\widehat{\alpha}$. Then, in the complete path algebra $\RQ{Q}$, $\widehat{\alpha}(u)\in e_kJ(S)e_\ell$. Since $Q$ does not have $2$-cycles incident to $k$, the expression of any element of $e_kJ(S)e_\ell$ as a possibly infinite linear combination of pahts involves only paths of length at least two. Use the equality
$$
\bigoplus_{a\in Q_1:h(a)=k} e_{t(a)} \jacobalg{Q,S} e_\ell = 
(\bbC e_{\ell})^{\overline{s}}\oplus \bigoplus_{i=1}^s e_{t(a_i)}\pi(\idealM)e_{\ell}.
$$
to write $u=\sum_{i=1}^{\overline{s}}x_ie_\ell+\sum_{i=1}^{s}p_i$ with $x_1,\ldots,x_{\overline{s}}\in\bbC$ and $p_i\in e_{t(a_i)}\pi(\idealM)e_{\ell}$. Then $$0=\widehat{\alpha}(u)=\sum_{i=1}^{\overline{s}}x_i a_i+\sum_{i=1}^{s}a_ip_i.$$
in the Jacobian algebra $\jacobalg{Q,S}$. 
This implies $x_1=\cdots=x_{\overline{s}}=0$, so $u\in\ker(\alpha)$. Therefore, $\ker\widehat{\alpha}=\ker\alpha$. The proof of the equality $\ker\widehat{\mathfrak{a}}=\ker\mathfrak{a}$ is also covered by the same argument.

By \cite[Lemma 10.6]{DWZ1}, $\image\widehat{\gamma}\subseteq\ker\widehat{\alpha}$. On the other hand, from Proposition \ref{prop:spanning ker a/ker gamma for radical} one can easily deduce that $\ker\alpha\subseteq\image\widehat{\gamma}$.

Now, since $\image\widehat{\beta}=\image\beta\subseteq \bigoplus_{j=1}^re_{h(b_j)}\pi(\idealM)e_{\ell}$ and $\bigoplus_{j=1}^re_{h(b_j)}\jacobalg{Q,S}e_{\ell}=(\bbC e_\ell)^{\overline{r}}\oplus\bigoplus_{j=1}^re_{h(b_j)}\pi(\idealM)e_{\ell}$, we deduce that $\coker\widehat{\beta}=(\bbC e_{\ell})^{\overline{r}}\oplus\coker\beta.$
\end{proof}

For the next proposition, note that since $\image\widehat{\beta}\subseteq\ker\widehat{\gamma}$ (by \cite[Proposition~6.1]{DWZ1}), there is a well-defined induced map $\overline{\widehat{\gamma}}:\coker\widehat{\beta}\rightarrow \bigoplus_{a\in Q_1:h(a)=k} e_{t(a)}\jacobalg{Q,S} e_\ell$. By $p:\bigoplus_{b\in Q_1:t(b)=k} e_{h(b)}\jacobalg{Q,S}\rightarrow\coker\widehat{\beta}$ we denote the standard projection to the quotient vector space.

\begin{prop}\label{prop:alpha-beta-diagrams-for-proj-of-premut-are-isomorphic}
With the above notation,
there exists a commutative diagram of $\bbC$-vector spaces and $\bbC$-linear maps 
\begin{equation}\label{eq:crucial-commutative-alpha-beta-diagram}
\xymatrix{
 & \coker\widehat{\beta}  \ar[dd]_{\psi_k} \ar[dl]_{\overline{\widehat{\gamma}}} & \\
 \bigoplus_{a\in Q_1:h(a)=k} e_{t(a)}\jacobalg{Q,S} e_\ell \ar[dd]_{[-]}  &  &  \bigoplus_{b\in Q_1:t(b)=k} e_{h(b)}\jacobalg{Q,S} e_\ell\ar[ul]_{-p} \ar[dd]^{[-]}\\
 & e_{k}\cdot \jacobalg{\widetilde{\mu}_k(Q,S)}\cdot e_\ell \ar[dl]_{\widehat{\mathfrak{b}}:=\left[\begin{array}{c}a_1^*\cdot \\ \vdots \\ a_s^*\cdot \end{array}\right]\quad}& \\
 \bigoplus_{a\in Q_1:h(a)=k} e_{t(a)}\cdot \jacobalg{\widetilde{\mu}_k(Q,S)}\cdot e_\ell & & \bigoplus_{b\in Q_1:t(b)=k} e_{h(b)}\cdot \jacobalg{\widetilde{\mu}_k(Q,S)}\cdot e_\ell \ar[ul]_{\qquad\widehat{\mathfrak{a}}:=\left[\begin{array}{ccc}b_1^*\cdot  & \cdots & b_r^*\cdot\end{array}\right]}
}
\end{equation}
whose vertical arrows are isomorphisms, and whose left-most and right-most vertical arrows are block-diagonal matrices of linear maps.
\end{prop}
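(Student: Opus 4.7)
The plan is to define $\psi_k$ by forcing commutativity of the right triangle, verify well-definedness and commutativity of the left triangle using the Jacobian relations coming from the cyclic derivatives $\partial_{a_i^*}\widetilde{\mu}_k(S)$ and $\partial_{[b_ja_i]}\widetilde{\mu}_k(S)$, and derive bijectivity of $\psi_k$ from Propositions~\ref{prop:alpha-beta-gamma-maps-for-radicals} and~\ref{prop:alpha-beta-gamma-maps-for-projective} together with surjectivity of $\widehat{\mathfrak{a}}$. Concretely, I would first extend the two bracket identifications of~\eqref{eq:bracket-isomorphisms} to bijective block-diagonal $\bbC$-linear maps on the full projective spaces by sending the additional $\bbC e_\ell$ direct summands (which appear in $e_{t(a_i)}\jacobalg{Q,S}e_\ell$ iff $i\le\overline{s}$, and in $e_{h(b_j)}\jacobalg{Q,S}e_\ell$ iff $j\le\overline{r}$) identically onto the analogous $\bbC e_\ell$ summands on the $\jacobalg{\widetilde{\mu}_k(Q,S)}$-side. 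The resulting extended bracket inherits from the path-substitution rule the multiplicativity $[x_1 b_j a_i x_2]=[x_1]\cdot[b_j a_i]\cdot[x_2]$ whenever the factorization occurs at an internal $k$-vertex of a path $x$ not starting nor ending at $k$. I then define $\psi_k\colon\coker\widehat{\beta}\to e_k\cdot\jacobalg{\widetilde{\mu}_k(Q,S)}\cdot e_\ell$ by
\[
\psi_k(u+\image\widehat{\beta}):=-\sum_{j=1}^{r}b_j^*\cdot[u_j]=-\widehat{\mathfrak{a}}\bigl([u]\bigr),\qquad u=(u_1,\ldots,u_r)\in\bigoplus_j e_{h(b_j)}\jacobalg{Q,S}e_\ell,
\]
which is the only choice consistent with commutativity of the right triangle.

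For well-definedness I must verify that $\widehat{\mathfrak{a}}([\widehat{\beta}(v)])=0$ in $\jacobalg{\widetilde{\mu}_k(Q,S)}$ for every $v\in e_k\jacobalg{Q,S}e_\ell$. Since $\ell\ne k$, such a $v$ lies in the radical and factors as $v=\sum_i a_i v_i$ with $v_i\in e_{t(a_i)}\jacobalg{Q,S}e_\ell$; applying multiplicativity of the extended bracket then gives
\[
\widehat{\mathfrak{a}}\bigl([\widehat{\beta}(v)]\bigr)=\sum_j b_j^*\cdot[b_j v]=\sum_i\Bigl(\sum_j b_j^*\cdot[b_j a_i]\Bigr)\cdot[v_i]=\sum_i\partial_{a_i^*}\widetilde{\mu}_k(S)\cdot[v_i]=0.
\]
Commutativity of the left triangle $\widehat{\mathfrak{b}}\circ\psi_k=[-]\circ\overline{\widehat{\gamma}}$ will be checked by a parallel computation: unpacking on a representative $q=(q_1,\ldots,q_r)$, each side reduces to the componentwise identity $-\sum_j a_i^*b_j^*[q_j]=\sum_j[\partial_{b_j a_i}(S)]\cdot[q_j]$, which holds because $\partial_{[b_j a_i]}\widetilde{\mu}_k(S)=[\partial_{b_j a_i}(S)]+a_i^*b_j^*$ vanishes in the Jacobian algebra --- the very identity already invoked in the proof of Proposition~\ref{prop:alpha-beta-gamma-maps-for-radicals}.

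Bijectivity of $\psi_k$ then splits in two. Surjectivity reduces to surjectivity of $\widehat{\mathfrak{a}}$: every path from $\ell$ to $k$ in $\widetilde{\mu}_k(Q)$ must have head-arrow among $b_1^*,\ldots,b_r^*$ (the only arrows of $\widetilde{\mu}_k(Q)$ ending at $k$), so $e_k\cdot\jacobalg{\widetilde{\mu}_k(Q,S)}\cdot e_\ell=\sum_j b_j^*\cdot e_{h(b_j)}\cdot\jacobalg{\widetilde{\mu}_k(Q,S)}\cdot e_\ell=\image\widehat{\mathfrak{a}}$, and composing with the bijective bracket and surjective $-p$ yields surjectivity of $\psi_k$. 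For injectivity, $\psi_k(q+\image\widehat{\beta})=0$ forces $[q]\in\ker\widehat{\mathfrak{a}}$; combining $\ker\widehat{\mathfrak{a}}=\ker\mathfrak{a}$ from Proposition~\ref{prop:alpha-beta-gamma-maps-for-projective} with $\ker\mathfrak{a}=\image\beta$ from Proposition~\ref{prop:alpha-beta-gamma-maps-for-radicals} and $\image\widehat{\beta}=\image\beta$ from Proposition~\ref{prop:alpha-beta-gamma-maps-for-projective}, together with bijectivity of the bracket, forces $q\in\image\widehat{\beta}$.

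The one place I expect actual technical care to be required is justifying the multiplicativity of the extended bracket at interior occurrences of $k$, since the original bracket of~\eqref{eq:bracket-isomorphisms} is only defined on paths whose endpoints avoid $k$, whereas the computations above invoke the bracket on composites like $b_j v$ that genuinely pass through $k$ in the middle. Once the path-substitution rule is shown to split cleanly through such interior $k$-vertices and through the identifications of $\bbC e_\ell$ summands, every remaining verification reduces to a bookkeeping exercise tying together $\partial_{a_i^*}\widetilde{\mu}_k(S)=0$, $\partial_{[b_j a_i]}\widetilde{\mu}_k(S)=0$, and the kernel/image descriptions already established in Propositions~\ref{prop:alpha-beta-gamma-maps-for-radicals} and~\ref{prop:alpha-beta-gamma-maps-for-projective}.
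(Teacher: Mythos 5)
Your proposal is correct and follows essentially the same route the paper takes: the vertical arrows on the left and right are the bracket identifications extended block-diagonally by the identity on the additional $\bbC e_\ell$ summands, and $\psi_k$ is the map $u+\image\widehat{\beta}\mapsto -\widehat{\mathfrak{a}}([u])$, which is exactly the paper's $-\overline{\widehat{\mathfrak{a}}}$ precomposed with the bracket identification on the cokernel. The paper is more laconic: it observes that $\ker\widehat{\mathfrak{a}}=\ker\mathfrak{a}=\image\beta=\image\widehat{\beta}$ (from Propositions~\ref{prop:alpha-beta-gamma-maps-for-radicals} and~\ref{prop:alpha-beta-gamma-maps-for-projective}) plus surjectivity of $\widehat{\mathfrak{a}}$ (clear since $k\ne\ell$) already make the induced map on the cokernel a well-defined isomorphism, and declares the commutativity of both triangles immediate. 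You re-derive the containment $\image\widehat{\beta}\subseteq\ker(\widehat{\mathfrak{a}}\circ[-])$ by a direct computation using $\partial_{a_i^*}\widetilde{\mu}_k(S)=\sum_j b_j^*[b_ja_i]=0$, and you check the left triangle explicitly via $\partial_{[b_ja_i]}\widetilde{\mu}_k(S)=[\partial_{b_ja_i}(S)]+a_i^*b_j^*=0$; both computations are correct and are essentially unpackings of what the paper treats as immediate. The one concern you flag at the end --- multiplicativity of the bracket through interior occurrences of $k$ --- is not actually a gap: $[-]$ is by definition the map that replaces every $k$-hook in a path whose endpoints avoid $k$ with the corresponding composite arrow, so a factorization $[x_1\,b_ja_i\,x_2]=[x_1][b_ja_i][x_2]$ at an interior $k$-vertex holds tautologically, and the fact that $[-]$ descends to (and is bijective between) the Jacobian algebras is exactly what the paper invokes from \cite[Proposition~6.1]{DWZ1} when introducing \eqref{eq:bracket-isomorphisms}.
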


\begin{proof}
We set the left-most and right-most vertical arrows to be the linear maps from \eqref{eq:bracket-isomorphisms}.

By Propositions \ref{prop:alpha-beta-gamma-maps-for-radicals} and \ref{prop:alpha-beta-gamma-maps-for-projective}, we have $\ker\widehat{\mathfrak{a}}=\ker\mathfrak{a}=\image\beta=\image\widehat{\beta}$.
Since $k\neq\ell$, the $\bbC$-linear map $\widehat{\mathfrak{a}}$ is surjective, so there is a well-defined induced $\bbC$-vector space isomorphism 
$$
\overline{\widehat{\mathfrak{a}}}:
\coker\widehat{\beta}=
\left(\bigoplus_{b\in Q_1:t(b)=k} e_{h(b)}\cdot \jacobalg{\widetilde{\mu}_k(Q,S)}\cdot e_\ell\right)/\ker\widehat{\mathfrak{a}}\rightarrow e_k\cdot \jacobalg{\widetilde{\mu}_k(Q,S)}\cdot e_\ell.
$$
We set $\psi_k:=-\overline{\widehat{\mathfrak{a}}}$.
The commutativity of the diagram \eqref{eq:crucial-commutative-alpha-beta-diagram} is immediate.
\end{proof}

\begin{proof}[Proof of Theorem \ref{thm:mut-of-proj-is-proj}]
By Proposition \ref{prop:alpha-beta-diagrams-for-proj-of-premut-are-isomorphic}, the decorated representations $\widetilde{\mu}_k(P_{(Q,S)}(\ell),0)$ and $(P_{\widetilde{\mu}_k(Q,S)},0)$ of $\jacobalg{\widetilde{\mu}_k(Q,S)}$ are isomorphic.

By \cite[Theorem 4.6]{DWZ1}, there exist a trivial quiver with potential $(C,T)$ and a $\bbC$-algebra isomorphism $\varphi:\RQ{\mu_k(Q)\oplus C}\rightarrow\RQ{\widetilde{\mu}_k(Q)}$ acting as the identity on the set of idempotents $\{e_j\suchthat j\in Q_0\}$, such that $\varphi(\mu_k(S)+T)$ is cyclically equivalent to $\widetilde{\mu}_k(S)$. Denoting $\iota:\RQ{\mu_k(Q)}\rightarrow \RQ{\mu_k(Q)\oplus C}$ the canonical inclusion, by \cite[Propositions 3.7 and 4.5]{DWZ1} we have induced $\bbC$-algebra isomorphisms
$$
\xymatrix{
\jacobalg{\mu_k(Q,S)}\ar[r]^{\overline{\iota}\qquad\quad} & \jacobalg{\mu_k(Q)\oplus C,\mu_k(S)+T} \ar[r]^{\qquad\quad \overline{\varphi}} &\jacobalg{\widetilde{\mu}_k(Q,S)}.
}
$$
Thus, the theorem follows from \cite[Definition 10.4 and (10.22)]{DWZ1}.
\end{proof}

\begin{remark}
 The results and arguments used throughout this section suggest that it may be possible to define mutations of infinite-dimensional representations of $\jacobalg{Q,S}$. It would be interesting to pursue this possibility, and to try and find out what the corresponding notion of $F$-polynomial or $F$-series should be.
\end{remark}

\section{Duality commutes with mutation}

For a quiver with potential $(Q,S)$ we denote by $(Q^{\operatorname{op}},S^{\operatorname{op}})$ the opposite QP. For any decorated representation $\mathcal{M}=(M,\mathbf{v})$ of $(Q,S)$, the pair $\mathcal{M}^{\star}:=(M^{\star},\mathbf{v})$ is a decorated representation of $(Q^{\operatorname{op}},S^{\operatorname{op}})$, where $M^{\star}:=\Hom_{\bbC}(M,\bbC)$ is the usual vector space duality. See \cite[\S7]{DWZ}.

\begin{prop}\label{prop:duality-commutes-with-mutation}
If $k$ is a vertex not incident to any $2$-cycle of $Q$, then $\mu_{k}(Q^{\operatorname{op}},S^{\operatorname{op}})=(\mu_{k}(Q)^{\operatorname{op}},\mu_k(S)^{\operatorname{op}})$ as quivers with potential, and for any decorated representation $\mathcal{M}$ of $(Q,S)$, the decorated representations $\mu_k(\mathcal{M}^{\star})$ and $\mu_k(\mathcal{M})^{\star}$ of the QP $\mu_{k}(Q^{\operatorname{op}},S^{\operatorname{op}})$ are isomorphic.
\end{prop}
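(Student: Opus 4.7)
The plan is to verify both assertions by careful unpacking of the definitions, with the second assertion reduced to a premutation version together with compatibility of the reduced-part construction from the first assertion.

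For the first assertion, I would proceed through the three-step procedure of Definition~\ref{def:QP-mutation}. The bijection $ab\leftrightarrow b^{\operatorname{op}}a^{\operatorname{op}}$ between $k$-hooks of $Q$ and of $Q^{\operatorname{op}}$ yields the natural identification $[b^{\operatorname{op}}a^{\operatorname{op}}]=[ab]^{\operatorname{op}}$, so Step~1 commutes with $\operatorname{op}$; Steps~2 and~3 do so trivially. At the potential level this translates into $[S^{\operatorname{op}}]=[S]^{\operatorname{op}}$ and $\Delta_k(Q^{\operatorname{op}})=\sum b_j^{\operatorname{op}*}[b_j^{\operatorname{op}}a_i^{\operatorname{op}}]a_i^{\operatorname{op}*}=(\Delta_k(Q))^{\operatorname{op}}$, so $\widetilde{\mu}_k(S^{\operatorname{op}})$ is cyclically equivalent to $(\widetilde{\mu}_k(S))^{\operatorname{op}}$. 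Since the opposite of a trivial QP is again trivial (both the degree-$2$ condition and the spanning condition on cyclic derivatives are preserved by $\operatorname{op}$), Theorem~\ref{thm:splittingthm} applied on both sides yields compatible trivial/reduced splittings, so their reduced parts agree under $\operatorname{op}$.

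For the second assertion, I would first establish the premutation statement $\widetilde{\mu}_k(\mathcal{M}^{\star})\cong(\widetilde{\mu}_k(\mathcal{M}))^{\star}$ as representations of $\widetilde{\mu}_k(Q^{\operatorname{op}},S^{\operatorname{op}})=\widetilde{\mu}_k(Q,S)^{\operatorname{op}}$. For vertices $j\neq k$ the identifications of spaces and actions are tautological. At $j=k$ the key identifications are $\alpha_{M^{\star}}=\beta_M^{\star}$, $\beta_{M^{\star}}=\alpha_M^{\star}$, and $\gamma_{M^{\star}}=\gamma_M^{\star}$ under the canonical identification of duals of direct sums with direct sums of duals. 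The decoration equality $\overline{v^{\star}}_k=\overline{v}_k$ then reduces to the Grassmann formula applied to the subspaces $\image\alpha_M,\ker\beta_M\subseteq M_k$ combined with the annihilator identity $(U+V)^{0}=U^{0}\cap V^{0}$.

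The technical core is the construction of an isomorphism $\overline{(M^{\star})}_k\to(\overline{M}_k)^{\star}$ intertwining the actions of the new arrows $a_i^{*\operatorname{op}}$ and $b_j^{*\operatorname{op}}$. Using $(\coker\beta_M)^{\star}=\ker\beta_M^{\star}$ and $(\ker\alpha_M/\image\gamma_M)^{\star}\cong\ker\gamma_M^{\star}/\image\alpha_M^{\star}$ (the latter from exactness of dualization applied to the complex $M_{\operatorname{out}}\xrightarrow{\gamma_M}M_{\operatorname{in}}\xrightarrow{\alpha_M}M_k$), both sides appear as the sum of a subquotient of $M_{\operatorname{in}}^{\star}$ and one of $M_{\operatorname{out}}^{\star}$ (plus the $\mathbb{C}^{v_k}$ summand), but the two subquotients of $M_{\operatorname{in}}^{\star}$ fit into a short exact sequence $0\to\ker\gamma_M^{\star}/\image\alpha_M^{\star}\to\coker\alpha_M^{\star}\to\image\gamma_M^{\star}\to0$ (and symmetrically for $M_{\operatorname{out}}^{\star}$), so the dimensions of corresponding summands differ by $\dim\image\gamma_M$. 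The isomorphism is therefore genuinely non-block-diagonal; I would build it by choosing compatible splittings $\sigma$ on both sides---by \cite[Proposition 10.9]{DWZ1} any two choices yield isomorphic premutations, so this is legitimate---and verify compatibility with the explicit formulas for $\overline{\alpha}$ and $\overline{\beta}$ by a direct matrix calculation with careful sign tracking. The passage from premutations to mutations is then routine via Definition~\ref{def:reducedpartofrep}: the right-equivalence $\varphi$ implementing the splitting of $\widetilde{\mu}_k(Q,S)$ dualizes, by the first paragraph, to a right-equivalence $\varphi^{\operatorname{op}}$ implementing the splitting of $\widetilde{\mu}_k(Q^{\operatorname{op}},S^{\operatorname{op}})$, and $(M^{\varphi})^{\star}=(M^{\star})^{\varphi^{\operatorname{op}}}$ as representations. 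The main obstacle is precisely the non-block-diagonal isomorphism: the swap of ``in'' and ``out'' combined with the $\dim\image\gamma_M$ shift between corresponding summands forces the iso to genuinely mix components, and checking intertwining with the actions needs patient sign and splitting bookkeeping.
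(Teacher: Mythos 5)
Your proposal is correct and follows essentially the same route as the paper's proof: identify $\alpha_{M^\star}=\beta_M^\star$, $\beta_{M^\star}=\alpha_M^\star$, $\gamma_{M^\star}=\gamma_M^\star$, handle the decoration at $k$ via the Grassmann formula and annihilator identities, and build a non-block-diagonal isomorphism of the new spaces at $k$ by choosing splittings/retractions and intertwining them with the explicit $\overline{\alpha}$, $\overline{\beta}$ formulas. The paper simply makes your sketched ``matrix calculation'' concrete by writing down the isomorphism $\psi_k(f+\image\alpha^\star,g+\image\gamma^\star,\mathbf{z})=(f|_{\image\gamma}\circ\overline{\gamma}+\overline{g|_{\ker\gamma}\circ\rho},\,f|_{\ker\alpha}\circ\sigma,\,\langle-,\mathbf{z}\rangle)$, whose first component indeed mixes $f$ and $g$ exactly as you predicted.
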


\begin{proof}
To compute $\mu_k(\mathcal{M})$ and $\mu_k(\mathcal{M}^{\star})$, one needs to consider the $\alpha$-$\beta$-$\gamma$-triangles 
$$
\xymatrix{
 & M_k \ar[dr]^{\beta} & \\
 M_{\operatorname{in}} \ar[ur]^{\alpha} & & M_{\operatorname{out}} \ar[ll]_{\gamma}
}
\quad \text{and} \quad
\xymatrix{
 & M_k^\star \ar[dl]_{\alpha^\star} & \\
 M_{\operatorname{in}}^\star \ar[rr]_{\gamma^\star} & & M_{\operatorname{out}}^\star \ar[ul]_{\beta^\star}
}
$$
Pick two sections $\sigma:\ker\alpha/\image\gamma\rightarrow\ker\alpha$, $s:\ker(\beta^\star)/\image(\gamma^\star)\rightarrow\ker(\beta^\star)$, and a retraction $\rho:M_{\operatorname{out}}\rightarrow \ker\gamma$. The map
$$
\psi_k:\coker(\alpha^\star)\oplus\frac{\ker(\beta^\star)}{\image(\gamma^\star)}\oplus \bbC^{v_k}\longrightarrow (\coker\beta)^\star\oplus\left(\frac{\ker\alpha}{\image\gamma}\right)^\star\oplus(\bbC^{v_k})^\star
$$
which for $f\in M_{\operatorname{in}}^\star$, $g\in M_{\operatorname{out}}^\star$ and $\mathbf{z}\in\bbC^{v_k}$ is given by
$$
\psi_k(f+\image(\alpha^\star),g+\image(\gamma^\star),\mathbf{z}):=(f|_{\image\gamma}\circ\overline{\gamma}+\overline{g|_{\ker\gamma}\circ\rho},f|_{\ker\alpha}\circ\sigma,\langle -,\mathbf{z}\rangle)
$$
is a well-defined isomorphism of $\bbC$-vector spaces, where $\langle \mathbf{y},\mathbf{z}\rangle:=\sum_{j=1}^{v_k}y_jz_j$ for $\mathbf{y}=(y_j)_{j=1}^{v_k},\mathbf{z}=(z_j)_{j=1}^{v_k}$. Furthermore, the diagram
$$
\xymatrix{
 & &\coker(\alpha^\star)\oplus\frac{\ker(\beta^\star)}{\image(\gamma^\star)}\oplus \bbC^{v_k} \ar[drr]^{\quad \left[\begin{array}{ccc}\overline{\gamma^\star} & i\circ s & 0\end{array}\right]} \ar[dd]_{\psi_k} & &\\
 M_{\operatorname{in}}^\star \ar[urr]^{\left[\begin{array}{c}-q\\ 0\\ 0\end{array}\right]\quad } \ar[dd]_{\myid} & & & & M_{\operatorname{out}}^\star \ar[dd]^{\myid} \\
 & & (\coker\beta)^\star\oplus\left(\frac{\ker\alpha}{\image\gamma}\right)^\star \oplus(\bbC^{v_k})^\star \ar[drr]_{\left[\begin{array}{ccc}-p^\star & 0 & 0\end{array}\right]} & &\\
 M_{\operatorname{in}}^\star \ar[urr]_{\left[\begin{array}{c}\overline{\gamma}^\star \\ (\iota\sigma)^\star \\ 0\end{array}\right]} & &  & & M_{\operatorname{out}}^\star
}
$$
commutes, where $q:M_{\operatorname{in}}^\star\rightarrow\coker(\alpha^\star)$ is the projection and $i:\ker(\beta^\star)\rightarrow M_{\operatorname{out}}^\star$ is the inclusion.

It remains to show that $\dim_{\mathbb{C}}\left(\frac{\ker\beta}{\ker\beta\cap\image\alpha}\right)=\dim_{\mathbb{C}}\left(\frac{\ker(\alpha^\star)}{\ker(\alpha^\star)\cap\image(\beta^\star)}\right)$. Now,
\begin{align*}
\ker(\alpha^\star)&=\{\varphi:M_k\rightarrow\mathbb{C}\suchthat \varphi\ \text{is} \ \mathbb{C}\text{-linear and} \ \image\alpha\subseteq\ker\varphi\}\\
& \cong \left(M_k/\image\alpha\right)^\star,\\
\image(\beta^\star)&=\{\psi\circ\beta:M_k\rightarrow\mathbb{C}\suchthat \psi\in M_{\operatorname{out}}^\star\}\\
&=\{\varphi:M_k\rightarrow\mathbb{C}\suchthat\varphi\ \text{is} \ \mathbb{C}\text{-linear and} \ \ker\beta\subseteq\ker\varphi\},\\
\text{hence} \qquad 
\ker(\alpha^\star)\cap\image(\beta^\star) &=
\{\varphi\in M_k^\star\suchthat \image\alpha+\ker\beta\subseteq\ker\varphi\}\\
&\cong \left(M_k/(\image\alpha+\ker\beta)\right)^\star.
\end{align*}
Therefore,
\begin{align*}
\dim_{\mathbb{C}}\left(\frac{\ker(\alpha^\star)}{\ker(\alpha^\star)\cap\image(\beta^\star)}\right)
&=-\dim_{\mathbb{C}}(\image\alpha)+\dim_{\mathbb{C}}(\image\alpha+\ker\beta)\\
&= \dim_{\mathbb{C}}(\ker\beta)-\dim_{\mathbb{C}}(\image\alpha\cap\ker\beta)\\
&=\dim_{\mathbb{C}}\left(\frac{\ker\beta}{\ker\beta\cap\image\alpha}\right).
\end{align*}
The proposition follows.
\end{proof}


\begin{coro}\label{coro:mut-of-inj-is-inj}
    For any quiver with potential $(Q,S)$ and any pair of distinct vertices $k,\ell\in Q_0$, with $k$ not  incident to $2$-cycles of $Q$, if the indecomposable injective $I_{(Q,S)}(\ell)$ is finite-dimensional, then so is the indecomposable injective $I_{\mu_k(Q,S)}(\ell)$, and the decorated representations $\mu_k(I_{(Q,S)}(\ell),0)$ and $(I_{\mu_k(Q,S)}(\ell),0)$ of the Jacobian algebra $\jacobalg{\mu_k(Q,S)}$ are isomorphic.
\end{coro}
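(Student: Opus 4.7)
The plan is to reduce the statement about injectives to the corresponding statement about projectives already established in Theorem \ref{thm:mut-of-proj-is-proj}, by means of the $\mathbb{C}$-linear duality $(-)^{\star}$ studied in Proposition \ref{prop:duality-commutes-with-mutation}. The key linear-algebraic fact to exploit is that for any finite-dimensional basic algebra the duality $\Hom_{\bbC}(-,\bbC)$ interchanges indecomposable projective left modules with indecomposable injective left modules of the opposite algebra; applied to Jacobian algebras this yields a natural isomorphism of decorated representations
\begin{equation*}
(I_{(Q,S)}(\ell),0)^{\star} \;\cong\; (P_{(Q^{\operatorname{op}},S^{\operatorname{op}})}(\ell),0),
\end{equation*}
valid whenever $I_{(Q,S)}(\ell)$ is finite-dimensional (in which case $P_{(Q^{\operatorname{op}},S^{\operatorname{op}})}(\ell)$ is finite-dimensional of the same $\bbC$-dimension).

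First I would record this identification carefully, using that the Jacobian algebra of the opposite QP $(Q^{\operatorname{op}},S^{\operatorname{op}})$ is canonically isomorphic to $\jacobalg{Q,S}^{\operatorname{op}}$, and that under this identification a left $\jacobalg{Q^{\operatorname{op}},S^{\operatorname{op}}}$-module structure on $M^{\star}$ is obtained from a left $\jacobalg{Q,S}$-module structure on $M$ by vector space duality. In particular the cyclic submodule $\jacobalg{Q,S}e_{\ell}$ is identified with the dual of the indecomposable injective $I_{(Q,S)}(\ell)$ at the same vertex $\ell$.

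Next, since $P_{(Q^{\operatorname{op}},S^{\operatorname{op}})}(\ell)$ is finite-dimensional, Theorem \ref{thm:mut-of-proj-is-proj} applied to the QP $(Q^{\operatorname{op}},S^{\operatorname{op}})$ (note that $k$ is not incident to any $2$-cycle of $Q^{\operatorname{op}}$ either) yields
\begin{equation*}
\mu_k\bigl(P_{(Q^{\operatorname{op}},S^{\operatorname{op}})}(\ell),0\bigr) \;\cong\; \bigl(P_{\mu_k(Q^{\operatorname{op}},S^{\operatorname{op}})}(\ell),0\bigr),
\end{equation*}
and in particular $P_{\mu_k(Q^{\operatorname{op}},S^{\operatorname{op}})}(\ell)$ is finite-dimensional. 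By Proposition \ref{prop:duality-commutes-with-mutation} we have $\mu_k(Q^{\operatorname{op}},S^{\operatorname{op}})=(\mu_k(Q)^{\operatorname{op}},\mu_k(S)^{\operatorname{op}})$, and the left-hand side is naturally isomorphic to $\mu_k((I_{(Q,S)}(\ell),0)^{\star})\cong \mu_k(I_{(Q,S)}(\ell),0)^{\star}$, again by that proposition. Dualizing the resulting isomorphism and using that $(-)^{\star}$ is involutive on finite-dimensional decorated representations, one obtains
\begin{equation*}
\mu_k(I_{(Q,S)}(\ell),0) \;\cong\; \bigl(P_{(\mu_k(Q)^{\operatorname{op}},\mu_k(S)^{\operatorname{op}})}(\ell),0\bigr)^{\star} \;\cong\; (I_{\mu_k(Q,S)}(\ell),0),
\end{equation*}
finishing the proof; the finite-dimensionality of $I_{\mu_k(Q,S)}(\ell)$ comes along for free from that of $P_{\mu_k(Q^{\operatorname{op}},S^{\operatorname{op}})}(\ell)$.

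The only nontrivial point in this plan is the bookkeeping involved in making the two applications of $(-)^{\star}$ compatible with the mutation procedure, and in verifying the opening identification $(I_{(Q,S)}(\ell),0)^{\star}\cong (P_{(Q^{\operatorname{op}},S^{\operatorname{op}})}(\ell),0)$ of decorated representations. Both of these are essentially formal once the conventions for the opposite QP and for the action on vector space duals are fixed; the substance of the argument has already been placed in Theorem \ref{thm:mut-of-proj-is-proj} and Proposition \ref{prop:duality-commutes-with-mutation}, which do all the representation-theoretic work.
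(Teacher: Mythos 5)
Your strategy is the right one in spirit — reduce to the projective case (Theorem \ref{thm:mut-of-proj-is-proj}) via the duality $(-)^\star$ and Proposition \ref{prop:duality-commutes-with-mutation} — but the step you describe as ``essentially formal'' is where the real content lies, and as written there is a genuine gap. Your opening identification
\[
(I_{(Q,S)}(\ell),0)^{\star}\;\cong\;(P_{(Q^{\operatorname{op}},S^{\operatorname{op}})}(\ell),0),
\]
together with the parenthetical claim that $P_{(Q^{\operatorname{op}},S^{\operatorname{op}})}(\ell)\cong e_\ell\jacobalg{Q,S}$ is then finite-dimensional, is exactly the nontrivial point: the Jacobian algebra $\jacobalg{Q,S}$ is \emph{not} assumed finite-dimensional, and over an arbitrary (possibly infinite-dimensional) algebra, $\Hom_{\bbC}(-,\bbC)$ does not simply interchange indecomposable injectives with indecomposable projectives of the opposite algebra. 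What is automatic is the weaker statement that $I_{(Q,S)}(\ell)^\star$ is projective over the \emph{truncated} algebra $\jacobalg{Q,S}^{\operatorname{op}}/J_m^{\operatorname{op}}$ (equivalently, projective in the category of representations of $\jacobalg{Q^{\operatorname{op}},S^{\operatorname{op}}}$ of nilpotency index $\le m$), because $I_{(Q,S)}(\ell)$ is annihilated by $\idealM^m$ and one can pass to a finite-dimensional quotient algebra where the standard duality applies. To upgrade this to the claim you need — that $e_\ell\jacobalg{Q,S}$ is actually finite-dimensional and equals $I_{(Q,S)}(\ell)^\star$ — requires a further argument (for instance, that the tower $e_\ell\bigl(\jacobalg{Q,S}/J_n\bigr)$ stabilizes, which can be deduced from the finite-dimensionality of $I_{(Q,S)}(\ell)$ but is not free). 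Without it, Theorem \ref{thm:mut-of-proj-is-proj} cannot be invoked, since its hypothesis is precisely the finite-dimensionality of the indecomposable projective.

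The paper's own proof sidesteps this issue rather than solving it head-on. It fixes a common nilpotency bound $m$ for $I_{(Q,S)}(\ell)$ and $M:=\mu_k(I_{(Q,S)}(\ell))$, uses Proposition \ref{prop:duality-commutes-with-mutation} to identify $\mu_k(I_{(Q,S)}(\ell)^\star,0)$ with $(M^\star,0)$, works throughout only with the relative notion ``projective/injective in the category of representations of nilpotency index $\le m$'' (where duality over the truncated algebras \emph{is} formal), and then invokes \cite[Lemma 2.2(iii) and Lemma 3.3]{CILFS} to pass from injectivity relative to the bound $m$ to genuine injectivity of $M$ as a $\jacobalg{\mu_k(Q,S)}$-module; indecomposability and the socle condition then force $M\cong I_{\mu_k(Q,S)}(\ell)$. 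So the paper never needs to assert, nor prove, that $e_\ell\jacobalg{Q,S}$ is finite-dimensional. If you want to keep your cleaner-looking route via Theorem \ref{thm:mut-of-proj-is-proj}, you must first supply the missing lemma that finite-dimensionality of $I_{(Q,S)}(\ell)$ forces finite-dimensionality of $e_\ell\jacobalg{Q,S}$ and the isomorphism $I_{(Q,S)}(\ell)^\star\cong e_\ell\jacobalg{Q,S}$; only then does the rest of your argument go through as stated.
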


\begin{proof}
Write $(M,0):=\mu_k(I_{(Q,S)}(\ell),0)$.
Suppose that $I_{(Q,S)}(\ell)$ is finite-dimensional. Then both $I_{(Q,S)}(\ell)$ and $M$ are finite-dimensional and nilpotent by \cite[Section 10]{DWZ1}. Their nilpotency indices are respectively equal to the nilpotency indices of their duals $I_{(Q,S)}(\ell)^\star$ and $M^\star$ (which are modules over the Jacobian algebras $\jacobalg{Q^{\operatorname{op}},S^{\operatorname{op}}}$ and $\jacobalg{\mu_k(Q^{\operatorname{op}},S^{\operatorname{op}}})$, respectively). Let $m\in\mathbb{Z}_{>0}$ be greater than the maximum of these nilpotency indices. 

By Proposition \ref{prop:duality-commutes-with-mutation}, $\mu_k(I_{(Q,S)}(\ell)^\star,0)$ and $(M^\star,0)$ are isomorphic. Since $I_{(Q,S)}(\ell)^\star$ is projective in the category of those representations of $\jacobalg{Q^{\operatorname{op}},S^{\operatorname{op}}}$ that have nilpotency index $\leq m$, $M^\star$ is projective in the category of those representations of $\jacobalg{\mu_k(Q^{\operatorname{op}},S^{\operatorname{op}}})$ that have nilpotency index $\leq m$. 

Thus, $M$ is injective in the category of those representations of $\jacobalg{\mu_k(Q,S)}$ that have nilpotency index $\leq m$.
By \cite[Lemma 2.2(iii) and Lemma 3.3]{CILFS}, this implies that $M$ is injective as a $\jacobalg{\mu_k(Q,S)}$-module.

Since $I_{(Q,S)}(\ell)$ is indecomposable, so is $M$. Furthermore, the simple at $\ell$ is contained in the socle of $M$. It follows that $M\cong I_{\mu_k(Q,S)}(\ell)$.
\end{proof}

	\section{Background on cluster algebras of geometric type}
	In this section we introduce notions related to cluster algebras of geometric type following \cite{Ke13} with the convention of the $B$-matrix taken from \cite{DWZ1, DWZ}.
	\subsection{Definition of the cluster algebra}\label{Def:clusteralg}
	Let $m,n\in \mathbb{Z}_{\ge 1}$, $n\le m$ and $\mathcal{F}$ be the function field ${\mathbb{Q}}(x_1,\ldots,x_m)$ in $m$ indeterminants $x_1,\ldots,x_m$.
An \emph{ice quiver $Q$ of type $(m,n)$} is a quiver with $m$ vertices which does not have any arrows between vertices $i,j$ with $n+1\le i,j$. The vertices $k\le n$ of $Q$ are called \emph{mutable vertices} and the vertices $k\ge n+1$ of $Q$ are called \emph{frozen vertices}.

The data of the quiver $Q$ can equivalently given by $m \times n$-matrix $B=B_Q=(b_{i,j})$ such that $b_{i,j}$ is defined by the difference of the number of arrows from $j$ to $i$ and the number of arrows from $i$ to $j$. Note that the $n\times n$-submatrix $\tilde{B}$ given by the first $n$ rows is skew-symmetric. We call $\tilde{B}$ the \emph{principal part} of $B$. 

The mutation rule for quivers is expressed in terms of skew-symmetric matrices as follows. If $B_Q=(b_{i,j})$ and $B_{\mu_k(Q)}=(b'_{i,j})$, then
\begin{equation*}
b'_{i,j} = \begin{cases} -b_{i,j} & \text{if } k\in \{i,j\}\\
b_{i,j}+[b_{i,k}]_+[b_{k,j}]_{+} - [-b_{i,k}]_+[-b_{k,j}]_+ & \text{else,} \end{cases}
\end{equation*}

where for $a \in \mathbb{Z}$ the number $[a]_+$ is defined as the maximum of $a$ and $0$.

An \emph{$\mathcal{A}$-cluster seed} is a pair $(Q,u)$ where ${Q}$ is an ice quiver of type $(m,n)$ and $u=(u_1,\ldots,u_m)$ is a sequence of elements of $\mathcal{F}$ which freely generate $\mathcal{F}$.

For a seed $({Q},u)$ and $k\in Q_0$, $k\le n$, the \emph{mutation} $\mu_k ({Q},u)$ of the seed at $k$ is the seed $(\mu_k (Q),u')$, where $\mu_k({Q})$ is defined in Definition \ref{def:threesteps} and $u'$ is obtained from $u$ by replacing $u_k$ by the element $u'_k\in \mathcal{F}$ defined by
\begin{equation}
    u'_k u_k=\displaystyle\prod_{h(\alpha)=k}u_{t(\alpha)}+\displaystyle\prod_{t(\alpha)=k}u_{h(\alpha)},
\end{equation}
where $\alpha$ ranges over all arrows of $Q$ incident to $k$.

The \emph{initial seed} of ${Q}$ is $({Q},\{x_1,\ldots,x_m\})$. A \emph{cluster} associated to ${Q}$ is a sequence $u$ which appears in a seed $({Q}',u')$ obtained from the initial seed by iterated mutation. The elements of the clusters are called \emph{$\mathcal{A}$-cluster variables}. 

The cluster algebra $\mathcal{A}_{{Q}}$ is the $\mathbb{Q}-$subalgebra of $\mathcal{F}$ generated by all $\mathcal{A}$-cluster variables obtained from the initial seed $({Q},\{x_1,\ldots,x_m\})$ by iterated mutations.

\subsection{Parametrization of seeds by the regular $n$-tree} We have a convenient parametrization of the seeds which can be obtained from the initial seed by iterated mutation. Let $\mathbb{T}_n$ be the $n$-regular tree whose edges are labeled by the integers $1,\ldots,n$ such that each vertex $k$ has $n$ edges incident to $k$ which each carry a different label. 

Let $t_0$ be a vertex of $\mathbb{T}_n$. To each vertex $t$ of $\mathbb{T}_n$ we associate a seed $({Q}(t),u(t))$ such that $({Q}(t_0),u(t_0))$ is the initial seed and whenever a vertex $t$ is connected to a vertex $t'$ by an edge with label $k$, the seeds $({Q}(t),u(t))$ and $({Q}(t'),u(t'))$ are related by a mutation at $k$. We write $x_i(t)$, $1\le i \le n$ for the $\mathcal{A}$-cluster variables, in the seed $(Q(t),u(t))$. We also write $B(t)$ for the matrix $B_{Q(t)}.$

\subsection{$y$-seeds}
To every $t\in \mathbb{T}_n$ we associate another sequence $Y(t)=(y_1(t),\cdots, y_m(t))$ of elements in $\mathcal{F}$ called the $y$-variables. The triple $({Q}(t),u(t),Y(t))$ is called a \emph{seed pattern} of $\mathcal{A}_{{Q}}$. 
For the initial seed $t_0$ we set $Y(t_0)=(y_1,\ldots,y_m)$. If $t$ and $t'$ are connected in $\mathbb{T}_n$ by an edge with label $k$ and $B_{Q(t)}=(b_{i,j})$, we set
\begin{equation}\label{y-mut}
y_j(t')=\begin{cases} y_k(t)^{-1} & \text{ if }j=k, \\
y_j(t)(1+y_k(t)^{-\text{sgn}(b_{j,k})})^{-b_{j,k}} & \text{ if }j\ne k.
\end{cases}
\end{equation}
Note that this definition differs from the one given in \cite{Ke13} due to our different convention of the $B$-matrix.

Note also that, in difference to the frozen $\mathcal{A}$-cluster variables, in general $y_{\ell}(t)\ne y_{\ell}(t')$ for $\ell> n$ and distinct $t,t'\in \mathbb{T}_n$. If we want to stress that we consider the cluster algebra with initial seed pattern $({Q},\{x_1,\ldots,x_m\},(y_1,\ldots,y_m))$ at $t_0$, we write $x_j(t)=x_j^{Q,t_0}(t)$, $y_j(t)=y_j^{Q,t_0}(t)$.

$$\xymatrix{
t \ar@{-}[r]^{k} & t'
}$$

\section{Behavior of $F$-polynomials of injective QP-representations}
In this section we assume that $Q$ has no frozen vertices. 

\subsection{$\mathbf{g}$-vectors and $\mathbf{h}$-vectors}

Let $(Q,S)$ be a non-degenerate QP.
Recall from \cite[(1.13)]{DWZ1}, that the \emph{injective $\mathbf{g}$-vector} of a decorated representation $\mathcal{M}=(M,\mathbf{v})$ of $(Q,S)$ is the vector $\mathbf{g}^{(Q,S)}_{\mathcal{M}}=(g_j^{(Q,S)}(\mathcal{M}))_{j\in Q_0}$ defined by
$$
g_j^{(Q,S)}(\mathcal{M}):=\dim_{\bbC}\ker\gamma_j-\dim_{\bbC}M_j+v_j.
$$
By \cite[(10.4)]{DWZ} (see also \cite[Lemma 4.7, Proposition 4.8]{P11} and \cite[Lemmas 3.1, 3,3 and 3.4]{CILFS}), can be computed in terms of minimal injective presentations as follows. If 
$$
0\rightarrow M\rightarrow \bigoplus_{j\in Q_0}I_{(Q,S)}(j)^{a_j}\rightarrow \bigoplus I_{(Q,S)}(j)^{b_j}
$$
is a minimal injective presentation of $M$, then for every $j\in Q_0$ we have 
\begin{equation}\label{eq:g-vector-via-inj-presentations}
g_j^{(Q,S)}(\mathcal{M})=b_j-a_j+v_j.
\end{equation}

Recall also from \cite[(3.2)]{DWZ}, that the \emph{$\mathbf{h}$-vector} $\mathbf{h}_{\mathcal{M}}^{(Q,S)}=(h_j^{(Q,S)}(\mathcal{M}))_{j\in Q_0}$ is defined by
$$
h_j^{(Q,S)}(\mathcal{M}):=-\dim_{\bbC}\ker\beta_j.
$$
According to \cite[Proof of Lemma 5.2]{DWZ}, 
\begin{equation}\label{eq:h-vectors-under-mut}
    h_k^{(Q,S)}(\mathcal{M})-h_k^{\mu_k(Q,S)}(\mu_k(\mathcal{M}))=g_k^{(Q,S)}(\mathcal{M})
\end{equation}

\begin{lemma}\label{lemma:h-vectors-of-injectives}
If $(Q,S)$ is a non-degenerate QP, $k$ and $\ell$ are two distinct vertices of $Q$, and the injective $\jacobalg{(Q,S)}$-module $I_{(Q,S)}(\ell)$ is finite-dimensional, then $h_k^{(Q,S)}(I_{(Q,S)}(\ell),0)=0$.
\end{lemma}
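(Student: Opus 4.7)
The plan is to prove directly that $\ker\beta_k = 0$ on $M := I_{(Q,S)}(\ell)$; since $h_k^{(Q,S)}(I_{(Q,S)}(\ell),0) = -\dim_{\bbC}\ker\beta_k$ by definition, this yields the lemma immediately. Let $b_1,\ldots,b_r$ denote the arrows of $Q$ starting at $k$, and take any $m \in M_k = e_kM$ with $b_j\cdot m = 0$ for every $j$; the goal is to show that $m = 0$.

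The key step is to argue that the cyclic submodule $\jacobalg{Q,S}\cdot m$ of $M$ coincides with $\bbC m$. Every element there has the form $x\cdot m$ with $x \in \jacobalg{Q,S}\cdot e_k$, and the splitting $\jacobalg{Q,S}\cdot e_k = \bbC e_k \oplus \pi(\idealM)\cdot e_k$ reduces the task to showing that $y\cdot m = 0$ for every $y \in \pi(\idealM)\cdot e_k$. Using the paper's convention that a path $a_1\cdots a_d$ starts at $t(a_d)$, every path of positive length starting at $k$ is of the form $(a_1\cdots a_{d-1})\cdot b_j$ for some $j$. Combining this with the $\idealM$-adic continuity of the $\jacobalg{Q,S}$-action and the fact that $\idealM^N M = 0$ for $N$ large enough (since $M$ is finite-dimensional and hence nilpotent), one may write $y \equiv \sum_{j=1}^r y_j\, b_j$ modulo $\idealM^N$ for some $y_j \in \jacobalg{Q,S}\cdot e_{h(b_j)}$; consequently $y\cdot m = \sum_j y_j\cdot(b_j\cdot m) = 0$, establishing the claim.

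If $m$ were nonzero, the one-dimensional space $\bbC m$ would then be a submodule of $I_{(Q,S)}(\ell)$ annihilated by every arrow and by every idempotent $e_j$ with $j\ne k$, hence isomorphic to the simple module at the vertex $k$. But $I_{(Q,S)}(\ell)$ is the injective envelope of the simple at $\ell$, so its socle equals the simple at $\ell$ and must contain every simple submodule; this would force $k = \ell$, contradicting the hypothesis. Therefore $m = 0$, and the lemma follows. The only point that really requires care is the decomposition $y \equiv \sum_j y_j\, b_j$ modulo high powers of $\idealM$, which is routine given the continuity arguments already deployed in Section~2, e.g.\ in the proof of Proposition~\ref{prop:spanning ker a/ker gamma for radical}.
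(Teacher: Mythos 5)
Your proof is correct, and it takes a genuinely different — and more elementary — route than the paper's. The paper deduces the lemma from Derksen--Weyman--Zelevinsky's homological machinery: the $E$-invariant of $(I_{(Q,S)}(\ell),0)$ vanishes, so by \cite[(4.22) and Corollary 8.3]{DWZ} at least one of $h_k^{(Q,S)}(I_{(Q,S)}(\ell),0)$ and $h_k^{\mu_k(Q,S)}(\mu_k(I_{(Q,S)}(\ell),0))$ vanishes; then $g_k^{(Q,S)}(I_{(Q,S)}(\ell),0)=0$ (read off from the minimal injective copresentation), and the identity $h_k-h'_k=g_k$ from \eqref{eq:h-vectors-under-mut} forces both $h_k$'s to be zero. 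Your argument bypasses all of that and works directly with the definition of $\beta_k$: any $m\in\ker\beta_k$ generates a cyclic submodule $\jacobalg{Q,S}m=\bbC m$, because every path of positive length out of $k$ factors through one of the arrows $b_j$ and the $\idealM$-adic nilpotency of $M$ lets you truncate the action to finite sums of such paths; if $m\neq 0$ this gives a copy of the simple $S_k$ inside $\operatorname{soc}(I_{(Q,S)}(\ell))=S_\ell$, contradicting $k\neq\ell$. Both proofs are sound; yours is more self-contained (pure module theory, no mutation of representations, no $E$-invariant), and notably it does not even use the non-degeneracy of $(Q,S)$, whereas the paper's citations implicitly lean on mutation being well-defined. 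The paper's route has the virtue of making the link to the $g$-vector/$E$-invariant formalism that the surrounding section is built on, which is convenient for the subsequent application in Theorem~\ref{profFpol}, but as a standalone proof of the lemma your argument is cleaner.
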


\begin{proof} By \cite[Corollary 10.9]{DWZ} (see also \cite[Proposition 3.5]{CILFS}), the $E$-invariant of $(I_{(Q,S)}(\ell),0)$ is equal to $0$. Consequently, by \cite[(4.22) and Corollary 8.3]{DWZ}, one of the two numbers $h_k^{(Q,S)}(I_{(Q,S)}(\ell),0)$ and $h_k^{\mu_k(Q,S)}(\mu_k(I_{(Q,S)}(\ell),0)$ must be equal to $0$.
On the other hand, $g_k^{(Q,S)}(I_{(Q,S)}(\ell),0)=0$ by \eqref{eq:g-vector-via-inj-presentations}. The lemma thus follows from \eqref{eq:h-vectors-under-mut}.
\end{proof}

\subsection{F-Polynomials of QP-representations}\label{Sec:Fpolrep}

Using the notation of Section \ref{subsec:quiver-mutations} let $(Q,S)$ be a non-degenerate quiver with potential and let $\mathcal{M}=(M,V)$ be a QP-representation of $(Q,S)$. 

For $\ee\in \mathbb{Z}_{\ge 0}^{|Q_0|}$, we denote by

$$\Gr_{\ee}(\mathcal{M})=\{\mathcal{N}\subset \mathcal{M} \mid \mathcal{N} \text{ is a QP-subrepresentation of }\mathcal{M} \text{ of dimension vector }\ee\}.$$

We define the $F$-polynomial of $\mathcal{M}$ by
\begin{equation*}
F_{\mathcal{M}}(u_1,\ldots,u_n)=\sum_{\ee \in \mathbb{Z}_{\ge 0}^n} \chi(\Gr_{\ee})(\mathcal{M})\displaystyle\prod_{i=1}^{n}u_i^{e_i},
\end{equation*}
where $\chi$ denotes the topological Euler-characteristic.

The following theorem shows that $F-$polynomial of the injective representation to the vertex $\ell$ is $y$-seed mutation invariant provided we do not mutate at $\ell$.

\begin{thm}\label{profFpol} Let $\xymatrix{
t \ar@{-}[r]^{k} & t'}$ be an edge in $\mathbb{T}_n$ with $k\ne \ell$. Then 
$$F_{I_{(Q,S)}(\ell)}(\mu_k(y_1),\ldots,\mu_k(y_n))=F_{\mathcal{I}_{(Q',S')}(\ell)}(y_1,\ldots,y_n),$$
where $(Q',S')$ is obtained from $(Q,S)$ by mutation at $k$.
\end{thm}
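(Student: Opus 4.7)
The plan is to reduce the claimed identity to two ingredients that have already been prepared: the fact that mutation at $k\neq \ell$ sends the indecomposable injective at $\ell$ to the indecomposable injective at $\ell$ of the mutated QP (Corollary \ref{coro:mut-of-inj-is-inj}), and the Derksen-Weyman-Zelevinsky mutation formula for $F$-polynomials from \cite[\S5]{DWZ}.

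First, by Corollary \ref{coro:mut-of-inj-is-inj}, assuming (as we must in order for the $F$-polynomial to be defined as a polynomial) that $I_{(Q,S)}(\ell)$ is finite-dimensional, the decorated representations $\mu_k(I_{(Q,S)}(\ell),0)$ and $(I_{(Q',S')}(\ell),0)$ of $\jacobalg{Q',S'}$ are isomorphic. In particular, they share the same $F$-polynomial:
$$
F_{\mu_k(I_{(Q,S)}(\ell))}(y_1,\ldots,y_n)=F_{I_{(Q',S')}(\ell)}(y_1,\ldots,y_n).
$$

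Next, I would invoke the general DWZ transformation formula which, for any decorated representation $\mathcal{M}$ of a non-degenerate QP $(Q,S)$ and any non-frozen vertex $k$, has the schematic shape
$$
F_{\mathcal{M}}(\mu_k(y_1),\ldots,\mu_k(y_n)) = (1+y_k)^{\,\varepsilon_k(\mathcal{M})}\,F_{\mu_k(\mathcal{M})}(y_1,\ldots,y_n),
$$
where the exponent $\varepsilon_k(\mathcal{M})$ is an explicit integer linear combination of the $k$-th components of the $\mathbf{g}$- and $\mathbf{h}$-vectors of $\mathcal{M}$ (cf.\ \cite[(5.1)–(5.5)]{DWZ} and \eqref{eq:h-vectors-under-mut} above).

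Finally, I would apply this to $\mathcal{M}=(I_{(Q,S)}(\ell),0)$ and verify that $\varepsilon_k(\mathcal{M})=0$. On the one hand, the minimal injective presentation of $I_{(Q,S)}(\ell)$ is simply $0\to I_{(Q,S)}(\ell)\to I_{(Q,S)}(\ell)\to 0$, so \eqref{eq:g-vector-via-inj-presentations} gives $g_j^{(Q,S)}(I_{(Q,S)}(\ell),0)=-\delta_{j\ell}$; since $k\neq\ell$, the $k$-th component vanishes. On the other hand, Lemma \ref{lemma:h-vectors-of-injectives} gives $h_k^{(Q,S)}(I_{(Q,S)}(\ell),0)=0$. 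Hence whichever linear combination of $\mathbf{g}$- and $\mathbf{h}$-vectors appears as the exponent, we obtain $\varepsilon_k(I_{(Q,S)}(\ell),0)=0$, the correction factor is $1$, and the theorem follows.

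The main obstacle I anticipate is purely bookkeeping: one must match the precise sign and exponent conventions of the DWZ $F$-polynomial mutation formula to the $y$-variable mutation rule \eqref{y-mut} used in this paper (which differs from \cite{Ke13}). Once the exact form of $\varepsilon_k(\mathcal{M})$ is pinned down in the authors' conventions, the simultaneous vanishing of $g_k$ and $h_k$ for $(I_{(Q,S)}(\ell),0)$ at $k\neq\ell$ immediately closes the argument.
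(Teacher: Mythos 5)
Your proposal is correct and follows essentially the same route as the paper, whose proof is a one-line citation of exactly the same three ingredients: Corollary \ref{coro:mut-of-inj-is-inj}, Lemma \ref{lemma:h-vectors-of-injectives}, and \cite[Lemma~5.2]{DWZ}. The only minor imprecision is your hedge about the exact form of the exponent in DWZ's Lemma~5.2 (it is in fact $h_k(\mathcal{M})$ on one side and $h_k(\mu_k(\mathcal{M}))$ on the other, with the latter equal to $h_k(\mathcal{M})-g_k(\mathcal{M})$ by \eqref{eq:h-vectors-under-mut}), but your simultaneous vanishing of $g_k$ and $h_k$ covers this and closes the argument cleanly.
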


\begin{proof} This follows from a combination of Corollary \ref{coro:mut-of-inj-is-inj}, Lemma \ref{lemma:h-vectors-of-injectives} and \cite[Lemma 5.2]{DWZ}.
\end{proof}

\section{Cluster varieties and computation of Landau-Ginzburg potentials}

\subsection{Cluster varieties}
Another viewpoint due to Fock-Goncharov \cite{FG} on seed patterns is that they define algebraic tori which glue to positive spaces, called cluster varieties, via cluster mutations. We recall this construction in the following.

Fix an ice quiver $Q=Q(t_0)$ of type $(m,n)$. To each $t\in \mathbb{T}_n$ we assign a collection of $\mathcal{A}$-cluster variables $\{A_j(t)\}_{j\in [1,m]}$ and $\mathcal{X}$-cluster variables $\{X_j(t)\}_{j \in [1,m]}$ and tori
$$\mathcal{A}_{t}:=\operatorname{Spec}\mathbb{C}[A_j(t)^{\pm 1} \mid j\in [1,m]], \qquad {{\mathcal{X}}}_{t}:=\operatorname{Spec}\mathbb{C}[X_j(t)^{\pm 1} \mid j\in [1,m]],$$
called the \emph{$\mathcal{A}$-cluster torus} and the \emph{$\mathcal{X}$-cluster torus} associated to $t$, respectively. We call $\{A_j(t)\}_{j \ge n+1}$ and $\{X_j(t)\}_{j \ge n+1}$ the frozen $\mathcal{A}-$ and $\mathcal{X}$-cluster variables respectively.

Assume that the quiver $Q(t')$ is obtained from the quiver $Q(t)$ by mutation at the vertex $k\le n$ and $B(t)=(b_{i,j})$ is given as in Section \ref{Def:clusteralg}. We define birational transition maps (see \cite[Equations (13) and (14)]{FG})
\begin{align} \notag
{\mu_k^*} A_i(t') &   = \begin{cases}   {A_k(t)^{-1}  \left(\displaystyle\prod_{j \,:\, b_{k,j} >0} A_j(t) ^{b_{k,j}}   + \displaystyle\prod_{j \,:\, b_{k,j} < 0} A_j(t) ^{-b_{k,j}}\right)} &   \text{if $i=k$,} \\   A_i(t)  &   \text{else,} \end{cases} \\ \label{X-mutation}
 \check{\mu}_k^* X_i(t')  &   = \begin{cases}   X_k(t) ^{-1} &   \text{if $i=k$,} \\   X_i(t)  \left(1+X_k(t) ^{-\text{sgn} (b_{k,i})}\right)^{-b_{k,i}} &   \text{else,} \end{cases}
\end{align}
which we call \emph{$\mathcal{A}$- and $\mathcal{X}$-cluster mutation}, respectively. 

\begin{remark}
Note that our definition slightly differs from \cite{FG,GHKK} due to our different convention of the $B$-matrix in Section \ref{Def:clusteralg}.
\end{remark}

\begin{defi} Given a fixed initial ice quiver $Q=Q(t_0)$ for a $t_0\in \mathbb{T}_n$, the initial collection  of $\mathcal{A}$-cluster variables $\{A_k(t_0)\}_{k\in [1,m]}$ and $\mathcal{X}$-cluster variables $\{X_k(t_0)\}_{k \in [1,m]}$ and the algebraic tori $\mathcal{A}_{t_0}=\operatorname{Spec}\mathbb{C}[A_k(t_0)^{\pm 1}\mid k\in [1,m]],$ ${{\mathcal{X}}}_{t_0}=\operatorname{Spec}\mathbb{C}[X_k(t_0)^{\pm 1} \mid k\in [1,m]]$. The $\mathcal{A}$- and $\mathcal{X}$-cluster variety, respectively, is defined as the scheme 
$$\mathcal{A}:=\displaystyle\bigcup_{t' \in \mathbb{T}_n} \mathcal{A}_{t} \qquad \mathcal{X}=\displaystyle\bigcup_{t'\in \mathbb{T}_n} \mathcal{X}_{t}, $$
obtained by gluing the tori $\mathcal{A}_{t}$ and $\mathcal{X}_{t}$ along the $\mathcal{A}$- and $\mathcal{X}$-cluster mutation, respectively.
\end{defi}

\subsection{Partial compactification of cluster varieties and their Landau-Ginzburg potentials}\label{partcomp}

Many interesting spaces arise as so-called partial compactification of cluster varieties \cite{GHKK}. The partial compactification $\overline{\mathcal{A}}$ of $\mathcal{A}$ is hereby defined by adding the divisors corresponding to the vanishing locus of the frozen cluster variables.

In \cite{GHKK} a Landau-Ginzburg potential $W$ on $\mathcal{X}$ is defined as the sum of certain global monomials attached to the frozen cluster variables. We only give the definition in the case that every frozen cluster variable has an optimized seed.

\begin{defi}\label{potdefi}  Let $t,t_0\in \mathbb{T}_n$
\begin{itemize}
    \item[(i)]  Let $\ell$ be a frozen vertex of $Q(t)$. We say that $t$ is optimized for $\ell$ if $\ell$ is a sink in $Q(t)$.

\item[(ii)] Let $\mathcal{X}$ be the cluster variety with initial torus ${{\mathcal{X}}}_{t_0}=\operatorname{Spec}\mathbb{C}[X_j(t_0)^{\pm 1} \mid j\in [1,m]]$ corresponding to $Q(t_0)$. If for every $\ell\in \{n+1,\ldots,m\}$ there exists a $t_{\ell}\in \mathbb{T}_n$ which is optimized for $\ell$, we define 
$$W=\displaystyle\sum_{\ell \in \{n+1,\ldots,m\}} W_{\ell} \in \mathbb{C}[{\mathcal{X}}]$$
by 
$$\restr{W_{\ell}}{{{\mathcal{X}}}_{t_{\ell}}}=X_{\ell}(t_{\ell})^{-1}.$$
\end{itemize}
\end{defi}

Every summand $W_{\ell}$ may be obtained from a smaller quiver having $\ell$ as the only frozen vertex as the next lemma shows.

\begin{lemma}\label{subquiver}
Let $Q^{\ell}(t)$ be the full subquiver of $Q(t)$ spanned by all mutable vertices and $\ell$. Let $\mathcal{X'}$ be the cluster variety with initial torus $\mathcal{X}'_t=\operatorname{Spec}\mathbb{C}[X_j(t)^{\pm 1} \mid j\in [1,n]\cup \{\ell\}]$ corresponding to $Q^{\ell}(t)$. Identifying $\mathcal{X}'_{t}$ with a subtorus of ${{\mathcal{X}}}_{t}$, we have
$$\restr{W_{\ell}}{{\mathcal{X}}_{t}}=\restr{W_{\ell}}{{\mathcal{X}}'_{t}}.$$
Moreover
$$\restr{W_{\ell}}{{{\mathcal{X}'}}_{t}}\in \mathbb{C}[X_j(t)^{-1} \mid j\in [1,n]\cup \{X_\ell\}]$$
\end{lemma}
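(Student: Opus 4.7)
The plan is to pull back $W_\ell = X_\ell(t_\ell)^{-1}$ along the finite mutation sequence $t = t_{(0)} \to t_{(1)} \to \cdots \to t_{(N)} = t_\ell$ (every step mutating at some mutable vertex $k_r \in [1,n]$) and to monitor simultaneously the set of variables that appear and the signs of the exponents. Under the natural inclusion, $\mathcal{X}'_t \hookrightarrow \mathcal{X}_t$ is the subtorus defined by $X_j(t) = 1$ for every $j \in \{n+1,\ldots,m\}\setminus\{\ell\}$, so the first statement says that $W_\ell$, written on $\mathcal{X}_t$, does not depend on those variables at all, while the second says that the surviving expression lies in $\mathbb{C}[X_j(t)^{-1}\mid j\in[1,n]\cup\{\ell\}]$.

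For the first statement I would induct on $r$, proving the following joint invariant: for every $i \in [1,n] \cup \{\ell\}$ and every $r$, the rational function $X_i(t_{(r)})$ pulled back to $\mathcal{X}_t$ involves only the variables $X_s(t)$ with $s \in [1,n] \cup \{\ell\}$, and coincides with the analogous rational function obtained by running the same mutation sequence inside $Q^\ell$. The $\mathcal{X}$-mutation rule \eqref{X-mutation} expresses $X_i(t_{(r+1)})$ as a product of a power of $X_i(t_{(r)})$ and a power of $X_{k_{r+1}}(t_{(r)})$ only, with exponents determined by $b^{(r)}_{k_{r+1},i}$; since both indices lie in $[1,n]\cup\{\ell\}$, the $B$-matrix mutation rule shows that this entry agrees with the corresponding entry computed entirely inside $Q^\ell$. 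Sinkhood of $\ell$ in $Q(t_\ell)$ is inherited by the full subquiver $Q^\ell(t_\ell)$, so $t_\ell$ is optimized for $\ell$ also in the cluster structure of $Q^\ell$, and applying the invariant at $i=\ell$, $r=N$ yields $\restr{W_\ell}{\mathcal{X}_t}=\restr{W_\ell}{\mathcal{X}'_t}$.

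For the polynomial claim, having reduced to the subquiver cluster variety, one must show that the pullback of the monomial $X_\ell(t_\ell)^{-1}$ along the mutation sequence from $t_\ell$ back to $t$ lies in $\mathbb{C}[X_j(t)^{-1}]$. I would induct again on the remaining mutation length, carrying the inductive hypothesis that $W_\ell$ written at an intermediate seed is a sum of monomials in inverted coordinates. The inductive step substitutes $X_{k_{r+1}}(t_{(r+1)})^{-1}=X_{k_{r+1}}(t_{(r)})$ and
\[
X_i(t_{(r+1)})^{-1} = X_i(t_{(r)})^{-1}\bigl(1 + X_{k_{r+1}}(t_{(r)})^{-\text{sgn}(b^{(r)}_{k_{r+1},i})}\bigr)^{b^{(r)}_{k_{r+1},i}},
\]
after which one must check that the potentially positive powers of $X_{k_{r+1}}(t_{(r)})$ systematically cancel when $(1+\cdot)^{b}$ is expanded against the surrounding monomial structure, using the tropical/$c$-vector data inherited from the sink condition at $t_\ell$.

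The main obstacle is precisely this sign bookkeeping in the polynomial claim. The cleanest shortcut I would take is to invoke the global construction of $W$ in \cite{GHKK}: by definition, $W_\ell$ corresponds to a theta function attached to the boundary divisor of $\overline{\mathcal{A}}$ given by $A_\ell = 0$, and any such globally defined theta function has, in every $\mathcal{X}$-chart, a Laurent expansion supported on monomials whose exponents are all non-positive. The first claim, by contrast, is essentially formal: it reduces to the observation that both the $\mathcal{X}$-mutation formula and the $B$-matrix mutation rule respect restriction to the index set $[1,n]\cup\{\ell\}$.
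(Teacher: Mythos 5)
Your argument for the first equality is essentially the paper's, made somewhat more explicit. The paper simply observes from the $\mathcal{X}$-mutation rule that a coordinate $X_j(t)$ can only acquire a non-zero exponent in $\restr{W_\ell}{\mathcal{X}_t}$ if $j$ labels an edge on the unique path in $\mathbb{T}_n$ from $t$ to $t_\ell$ (so $j\in[1,n]$) or $j=\ell$, and that the whole computation is local to the index set $[1,n]\cup\{\ell\}$; your induction tracking both the set of surviving variables and the agreement of the relevant $B$-matrix entries $b_{k,i}$ ($k,i\in[1,n]\cup\{\ell\}$) between $Q$ and $Q^\ell$ spells out the same idea, and the auxiliary observations (mutation of $B$ preserves the submatrix indexed by $[1,n]\cup\{\ell\}$; sinkhood of $\ell$ passes to the full subquiver $Q^\ell(t_\ell)$) are all correct.

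Where you diverge is the claim $\restr{W_\ell}{\mathcal{X}'_t}\in\mathbb{C}[X_j(t)^{-1}\mid j\in[1,n]\cup\{\ell\}]$. You acknowledge the exponent-sign bookkeeping is the hard part and propose as a shortcut that any theta function of $\overline{\mathcal{A}}$ attached to a boundary divisor has non-positive exponents in every $\mathcal{X}$-chart, attributing this to \cite{GHKK}. The paper instead cites \cite[Proposition 4]{KS} for exactly the statement $\restr{W_\ell}{\mathcal{X}_t}\in\mathbb{C}[X_j(t)^{-1}\mid j\in[1,m]]$. Be careful here: what \cite{GHKK} directly provide is Laurent-polynomiality and positivity of \emph{coefficients} for theta functions; the non-positivity of \emph{exponents} in every chart is a separate, sharper fact (essentially a sign-coherence phenomenon propagated from the single monomial $X_\ell(t_\ell)^{-1}$ at the optimized seed), and it is precisely what \cite[Proposition 4]{KS} establishes. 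As written, your appeal to \cite{GHKK} overstates what is available there and leaves a gap; replacing it by the \cite{KS} reference (or carrying out the sign bookkeeping, which is non-trivial) closes it and makes your proof align with the paper's.
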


\begin{proof}
The fact, that 
$$\restr{W_{\ell}}{{{\mathcal{X}}}_{t}}\in \mathbb{C}[X_j(t)^{-1} \mid j\in [1,m]\}$$
was shown in \cite[Proposition 4]{KS}.

Note that by the definition of the $\mathcal{X}$-cluster mutation given in \eqref{X-mutation}, the variable $X_j(t)$ may only contribute to any monomial of $\restr{W_{\ell}}{{{\mathcal{X}}}_{t}}$ with non-zero exponent if there is a path from $t$ to $t_{\ell}$ containing an edge with label $j$, where $t_{\ell}$ is optimized for $\ell$.
Therefore $\restr{W_{\ell}}{{{\mathcal{X}'}}_{t}}=\restr{W_{\ell}}{{{\mathcal{X}}}_{t}}\in \mathbb{C}[X_j(t)^{-1} \mid j\in 1,n]\cup \{X_{\ell}\}]$ and, seen as an element of $\mathbb{C}[X_j(t)^{-1} \mid j\in [1,m]$, $\restr{W_{\ell}}{{{\mathcal{X}}}_{t}}$ is independent of adding or deleting frozen vertices $\ell'\ne \ell$.
\end{proof}

\subsection{Landau-Ginzburg potentials as sums of $F$-polynomials}

Using the notation of Section \ref{backgroundQP} let $(Q,S)$ be a non-degenerate quiver with potential and let $\mathcal{M}=(M,\mathbf{v})$ be a decorated representation of $(Q,S)$. Recall that this means by definition that $M$ is finite-dimensional. 

Let 
$$\Gr^{\ee}(\mathcal{M})=\{\mathcal{N} \mid \mathcal{N}=\mathcal{M}/\mathcal{N'} \text{ for a QP-suprepresentation } \mathcal{N'} \text{ of }\mathcal{M} \text{ and the dimension vector of }\mathcal{N} \text{ is }\ee\}.$$

We define the dual $F$-polynomial of $\mathcal{M}$ by

\begin{equation*}
F^{\vee}_{\mathcal{M}}(u_1,\ldots,u_m)=\sum_{\ee \in \mathbb{Z}_{\ge 0}^m} \chi(\Gr^{\ee}(\mathcal{M}))\displaystyle\prod_{i=1}^{m}u_i^{e_i}.
\end{equation*}

In our expression of the Landau-Ginzburg potential we use the dual $F$- poylynomial of the indecomposable projective $(Q^{\ell},S^{\ell})$-representation ${P}_{(Q^{\ell},S^{\ell})}(\ell)$ to the vertex $\ell$ of the quiver $Q^{\ell}$ as defined in Lemma \ref{subquiver}, where $S^{\ell}$ is the restriction of the potential $S$ on $Q$ to $Q^{\ell}$ defined as follows.

\begin{defilem}[{\cite[Definition 8.8]{DWZ1}, \cite[Corollary 22]{LF09}}]
Let $(Q,S)$ be a QP and $\tilde{Q}$ be a full subquiver of $Q$. Let $\Psi_{\tilde{Q}}:\RQ{Q} \rightarrow \RQ{\tilde{Q}}$ be the $\mathbb{C}$-algebra homomorphism such that for $a\in Q$ we have $\Psi_{\tilde{Q}}(a)=\begin{cases} a & \text{if }a\in \tilde{Q} \\ 0 & \text{if }a\notin \tilde{Q} \end{cases}$. We define the restriction $\restr{S}{\tilde{Q}}$ of $S$ on $\tilde{Q}$ as $\restr{S}{\tilde{Q}}:=\Psi_{\tilde{Q}}(S)$. If $(Q,S)$ is non-degerenate then $(\tilde{Q}, \restr{S}{\tilde{Q}})$ is non-degenerate.
\end{defilem}

We get an expression of the Landau-Ginzburg potential in any $\mathcal{X}$-cluster torus as sum of $F$-polynomials (without constant term) of projective QP-representations in the following theorem.

\begin{thm}\label{potasfpol} Let $\mathcal{X}$ be the cluster variety with initial torus ${{\mathcal{X}}}_{t}=\operatorname{Spec}\mathbb{C}[X_k(t)^{\pm 1} \mid k\in [1,m]]$ and initial quiver $Q$. Assume that for every $\ell\in \{n+1,\ldots,m\}$ there exists a $t_{\ell}\in \mathbb{T}_n$ which is optimized for $\ell$. For any $\ell \in \{n+1,\ldots,m\}$ let $P(\ell):={P}_{(Q^{\ell},S^{\ell})}(\ell)$ be the indecomposable projective $\jacobalg{Q^{\ell},S^{\ell}}$-representation of  to the vertex $\ell$. Then $(P(\ell),0)$ is a decorated representation of $(Q^{\ell},S^{\ell})$, i.e. $P(\ell)$ is finite dimensional.

Moreover
\begin{equation}\label{mainthmeq}
\restr{W_{\ell}}{{{\mathcal{X}}}_{t}}=F^{\vee}_{{P}_{(Q^{\ell},S^{\ell})}(\ell)}(X_1(t)^{-1},\ldots X_m(t)^{-1}-1=X_{\ell}(t)^{-1}F^{\vee}_{\text{rad}_{(Q^{\ell},S^{^\ell})}{P_{(Q^{\ell},S^{\ell})}(\ell)}}(X_1(t)^{-1},\ldots X_m(t)^{-1}),\end{equation}
where $\text{rad}_{(Q^{\ell},S)}{P}_{(Q^{\ell},S^{\ell)}}(\ell)$ is the radical of $P(\ell)$, $S$ is a potential such that $(Q,S)$ is a non-degenerate QP and $S^{\ell}=\restr{S}{Q^{\ell}}$.
\end{thm}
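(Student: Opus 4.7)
The plan is to prove the first equality in \eqref{mainthmeq} by induction on the distance in $\mathbb{T}_n$ between $t$ and a seed $t_\ell$ optimized for $\ell$. By Lemma \ref{subquiver}, one may restrict attention to the cluster variety associated with the subquiver $Q^\ell$, and I shall do so. The second equality in \eqref{mainthmeq} is a direct algebraic consequence of $P(\ell)=\jacobalg{Q^\ell, S^\ell}e_\ell$ being local with simple top at $\ell$: every nonzero quotient of $P(\ell)$ has a one-dimensional $\ell$-component, and the bijection $M/N \leftrightarrow \rad(M)/N$ between nonzero quotients of $P(\ell)$ and quotients of $\rad P(\ell)$ yields $F^\vee_{P(\ell)} = 1 + u_\ell F^\vee_{\rad P(\ell)}$ upon summing Euler characteristics.

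The base case is at $t_\ell$: since $\ell$ is a sink of $Q^\ell(t_\ell)$, no arrow starts at $\ell$, so $P(\ell) = \mathbb{C} e_\ell$ is the (finite-dimensional) simple module, $F^\vee_{P(\ell)} = 1 + u_\ell$, and $F^\vee_{P(\ell)}(X(t_\ell)^{-1}) - 1 = X_\ell(t_\ell)^{-1} = \restr{W_\ell}{\mathcal{X}_{t_\ell}}$ directly from the definition of $W_\ell$. Finite-dimensionality of $P_{(Q^\ell(t), S^\ell(t))}(\ell)$ at arbitrary $t \in \mathbb{T}_n$ then propagates from $t_\ell$ along any $\mathbb{T}_n$-path via Theorem \ref{thm:mut-of-proj-is-proj}, since all its edges are labelled by vertices $k \ne \ell$.

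For the inductive step, let $t$ and $t'$ be joined in $\mathbb{T}_n$ by an edge labelled $k \ne \ell$, assume the formula at $t'$, and observe that it suffices to establish the identity of rational functions $F^\vee_{P_{(Q^\ell(t), S^\ell(t))}(\ell)}(X(t)^{-1}) = F^\vee_{P_{(Q^\ell(t'), S^\ell(t'))}(\ell)}(X(t')^{-1})$ on $\mathcal{X}$. I will combine two ingredients. First, vector-space duality identifies quotients of a QP-representation $M$ with subrepresentations of $M^\star$ while preserving dimension vectors, so $F^\vee_{P_{(Q,S)}(\ell)}$ coincides with $F_{I_{(Q^{\mathrm{op}}, S^{\mathrm{op}})}(\ell)}$; applying Theorem \ref{profFpol} to the opposite QP $(Q^{\ell, \mathrm{op}}(t), S^{\ell, \mathrm{op}}(t))$ and invoking Proposition \ref{prop:duality-commutes-with-mutation} together with Corollary \ref{coro:mut-of-inj-is-inj} to interchange the operations of taking opposites and mutating, one arrives at the polynomial identity $F^\vee_{P_{(Q^\ell(t), S^\ell(t))}(\ell)}(\mu_k^{\mathrm{op}}(u)) = F^\vee_{P_{(Q^\ell(t'), S^\ell(t'))}(\ell)}(u)$, where $\mu_k^{\mathrm{op}}$ denotes the $y$-mutation formula using the matrix $-B(Q^\ell(t))$. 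Second, a short computation using the skew-symmetry $b_{k,i} = -b_{i,k}$ shows that the $\mathcal{X}$-cluster mutation rule, rewritten in the variables $X_i^{-1}$, becomes the $y$-mutation formula in $B(Q^\ell(t))$; combining this with involutivity of $y$-mutation and the ice-quiver rule $b_{j,k}(t') = -b_{j,k}(t)$ for the $k$-th column of $B$ (which alone determines the formula at vertex $k$), the reverse substitution $X(t')^{-1} \leadsto X(t)^{-1}$ is realised precisely by $\mu_k^{\mathrm{op}}$. Substituting $u = X(t')^{-1}$ in the polynomial identity then yields the desired equality of functions and closes the induction. The main technical obstacle is disciplined sign bookkeeping: one must carefully track the row-versus-column distinction between $\mathcal{X}$- and $y$-mutation, as well as the passage between $Q^\ell$ and its opposite in the duality step.
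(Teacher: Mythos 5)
Your proof is correct and follows essentially the same strategy as the paper: induct along $\mathbb{T}_n$ starting from an optimized seed $t_\ell$ where $P(\ell)$ is simple, propagate finite-dimensionality via Theorem \ref{thm:mut-of-proj-is-proj}, and in the inductive step convert the dual $F$-polynomial of the projective into the $F$-polynomial of the injective over the opposite QP so that Theorem \ref{profFpol} (the $y$-seed mutation invariance) applies. The only substantive deviations are cosmetic: you spell out the duality and sign bookkeeping that the paper leaves implicit (invoking Proposition \ref{prop:duality-commutes-with-mutation} and Corollary \ref{coro:mut-of-inj-is-inj} explicitly), you justify the radical identity by the bijection between nonzero quotients of $P(\ell)$ and quotients of $\rad P(\ell)$ where the paper simply asserts it, and you correctly write the base case as $F^\vee_{P(\ell)}=1+u_\ell$, implicitly fixing what appears to be a typo in the paper's proof (which writes $F^\vee=u_\ell$ but then evaluates it to $X_\ell^{-1}+1$).
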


\begin{proof}
Let $t=t_{\ell}$. Then $\ell$ is a sink in $Q$ and hence ${P}_{(Q^{\ell},S^{\ell})}(\ell)$ is the simple QP-module to the vertex $\ell$. Since mutation preserves the property of being a QP-module by construction (see \cite[Section 10, Equation 10.6, Equation 10.7, Proposition 10.7]{DWZ1}) the fact that $(P(\ell),0)$ is a QP-representation of $(Q^{\ell},S^{\ell})$ follows from Theorem \ref{thm:mut-of-proj-is-proj}.

We show the first equality of \eqref{mainthmeq} by induction. For $t=t_{\ell}$ we have
$$F^{\vee}_{{P}_{(Q^{\ell},S)}(\ell)}(u_1,\ldots,u_m)=u_{\ell}.$$
We hence get by Definition \ref{potdefi},
$$F^{\vee}_{{P}_{(Q^{\ell},S)}(\ell)}(X_1(t)^{-1},\ldots X_m(t)^{-1})=X_{\ell}(t)^{-1}+1=\restr{W_{\ell}}{{{\mathcal{X}}}}_{t}+1.$$

Let us now assume the statement holds for an arbitraty $t\in \mathbb{T}_n$ and let
$$\xymatrix{
t \ar@{-}[r]^{k} & t'
}$$
be an edge in $\mathbb{T}_n$. Since 
$$\restr{W_{\ell}}{{{\mathcal{X}}}}_{t'}=\restr{W_{\ell}}{{{\mathcal{X}}}}_{t}\circ \check{\mu}_k^*,$$

it remains to show
\begin{equation}\label{eq:left}
    F^{\vee}_{{P}_{(Q^{\ell},S^{\ell})}(\ell)}(X_1(t)^{-1},\ldots X(t)_m^{-1})\circ  \check{\mu}_k^*=F^{\vee}_{{P}_{(Q'^{\ell},S'^{\ell})}(\ell)}(X_1(t)^{-1},\ldots X_m(t)^{-1}),
\end{equation}
where $(Q'^{\ell},S'^{\ell})$ is obtained from $(Q^{\ell},S^{\ell})$ by mutation at $k$.

Let $(Q^{\ell})^{\operatorname{op}}$ be the opposite quiver to $Q^{\ell}$, $(S^{\ell})^{\operatorname{op}}$ the opposite potential and let ${I}_{((Q^{\ell})^{\operatorname{op}},(S^{\ell})^{\operatorname{op}})}(\ell)$ be the indecomposable injective QP-module of $((Q^{\ell})^{\operatorname{op}},(S^{\ell})^{\operatorname{op}})$ to the vertex $\ell$.

Comparing the $\mathcal{X}$-cluster mutation rule (\eqref{X-mutation}) with the $y-$seed mutation rule (\eqref{y-mut}), we note that \eqref{eq:left} is equivalent the following statement. 
\begin{equation*}
    F_{{I}_{((Q^{\ell})^{\operatorname{op}},(S^{\ell})^{\operatorname{op}})}(\ell)}(\mu_k(y_1(t)),\ldots, \mu_k(y_m(t))) =F_{{I}_{((Q'^{\ell})^{\operatorname{op}},(S'^{\ell})^{\operatorname{op}})}(\ell)}(y_1(t),\ldots y_m(t))
\end{equation*}
which is proved in Theorem \ref{profFpol}.
The last equality of \eqref{mainthmeq} follows directly from the definition of the radical. 
\end{proof}

\begin{ex}\label{ex:SL3}
Let $Q(t)$ be the following quiver:

$$\xymatrix{
& 2 \ar[dl]^{b}  \\
1 \ar[rr]^{a} && 3 
}$$
with $\{2,3\}$ the set of frozen vertices. Since the quiver is acyclic, every potential is trivial. We have $Q(t)^2=1 \xleftarrow{b} 2$, the indecomposable projective $P(2):=P_{(Q(t)^2,0)}(2)$ to the vertex $2$ is of dimension vector $(1,1)$ and $S(2)$, the simple representation to the vertex $2$, is the only non-trivial proper quotient of $P(2)$ and has dimension vector $(0,1)$. Hence, by Theorem \ref{potasfpol}, we have
$$\restr{W_{2}}{{{\mathcal{X}}}}_{\ii}=X(t)^{-1}_2+X(t)^{-1}_2X(t)^{-1}_1$$
since both quiver Grassmannians are points.

For the other frozen vertex, we consider $Q(t)^3=1 \xrightarrow{1} 3$ and notice that in this case $P(3)$ is the simple module to the vertex $3$. Hence, by Theorem \ref{potasfpol},
$$\restr{W_{3}}{{{\mathcal{X}}}}_{\ii}=X^{-1}_3.$$
\end{ex}
\begin{remark}
Example \ref{ex:SL3} shows the need to consider the full subquiver  $Q^{\ell}$ of $Q$ since the indecomposable projectives to the vertex $2$ differ for $Q$ and $Q^{\ell}$. One way of overcoming this technically might be to work out Section \ref{Sec:Fpolrep} for ice quivers with potential following \cite{P20} or \cite{GLS20}.
\end{remark}

\section{The cluster algebra $\mathbb{C}[U]$ and applications to string cones}

\subsection{The cluster algebra $\mathbb{C}[U]$}

Let $G$ be a connected, simply-connected, simple, simply-laced algebraic group with Borel subgroup $B\subset G$. Let $B^-$ be the opposite Borel. Let $W$ be the Weyl group of $G$ with set of generators $\{s_i\mid i\in I\}$ and longest element $w_0$, where $I$ is the index set of the Dynkin diagram of $G$. For $u,v \in W$, the subset $G^{u,v}=BuB\cap B^-vB^-$ of $G$ is called the double Bruhat cell corresponding to $u,v$ and $G$ is the disjoint union of all double Bruhat cells. In \cite{BFZ2} a cluster structure on each double Bruhat cell $G^{u,v}$ is constructed. In this section we focus on the open reduced double Bruhat cell $L^{e,w_0}=U\cap B^-w_0B^-$ in the unipotent radical $U$ of $B$.

We call $\ii=(i_1,\ldots,i_N)$, $i_s\in I$ for all $s$, a reduced word for $w_0$ if $w_0=s_{i_1}\ldots s_{i_N}$ and this is a decomposition of $w_0$ into a product of generators with a minimal numbers of factors. Recall that each such decomposition has the same length which we denote by $N$. The set of all reduced words for $w_0$ is denoted by $R(w_0)$.

A choice of initial quiver to the cluster structure on $L^{e,w_0}$ is given for any $\ii \in R(w_0)$ as follows.

\begin{defi}\label{clustergraph} We associate to every reduced word $\mathbf{i} =(i_1,i_2,\ldots,i_N)\in R(w_0)$ a quiver $\Gamma_{\ii}$ with vertex set $\{j \mid j \in [1,N]\}$. For an index $j\in N$ we denote by $j^+$ the smallest index $s \in [1,N]$ such that $j<s$ and $i_{j}=i_s$. If no such $s$ exists, we set $j^+=N+1$. The vertices $j,s$, $j<s$ are connected by an edge in $\Gamma_{\ii}$ if and only if $\{j^+,s^+\}\cap [1,N]\ne \emptyset$ and one of the two conditions are satisfied
\begin{itemize}
\item[(i)] $s=j^+$,
\item[(ii)] $s < j^+ <s^+$.
\end{itemize}
An edge of type $(i)$ is directed from $j$ to $s$ and an edge of type $(ii)$ is directed from $s$ to $j$. The set of frozen vertices of $\Gamma_{\ii}$ is given by all $\ell$ such that $\ell^+=N+1$.
\end{defi}

We denote by $\mathcal{X}_{\ii}=\operatorname{Spec}\mathbb{C}[X_j^{\pm 1}\mid j \in [1,N]]$ the initial torus of the $\mathcal{X}$-cluster variety associated to $Q=\Gamma_{\ii}$. Note that for every index $i\in I$ there is a frozen vertex $\ell_i$ of $\Gamma_{\ii}$ where $\ell_i=\max \{j \in [1,N] \mid i_j=i\}$ and every frozen vertex is of that form. Thus the Landau-Ginzburg potential $W$ takes the form
$$W=\sum_{i\in I} W_{\ell_i}.$$

The quiver $\Gamma_{\ii}$ omits a non-degenerate potential by \cite{K19} which we recall in the following. We call a cycle of $\Gamma_{\ii}$ which is not divided by an edge a \emph{face of $\Gamma_{\ii}$}.

\begin{prop}\cite[Theorem 1.2.]{K19}\label{nondegred} The potential $S(\ii)$ on $\Gamma_{\ii}$ given by
$$S(\ii)=\sum\text{clockwise oriented faces}-\sum\text{anti-clockwise oriented faces}$$
is non-degenerate.
\end{prop}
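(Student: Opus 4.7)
The plan is to prove non-degeneracy of $(\Gamma_\ii, S(\ii))$ by categorifying this QP via the preprojective algebra $\Pi$ of the simply-laced Dynkin type of $G$. Let $\mathcal{C}_{w_0}$ denote the stably $2$-Calabi-Yau Frobenius category introduced by Geiss--Leclerc--Schr\"oer (building on Buan--Iyama--Reiten--Scott) inside $\operatorname{mod}\Pi$. This category admits a family of basic cluster-tilting objects $T_\ii = \bigoplus_{j=1}^N M_{\ii,j}$ parametrized by reduced words for $w_0$, where each summand $M_{\ii,j}$ is built from an injective $\Pi$-module by a subfactor construction determined by the prefix of $\ii$ of length~$j$.

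First I would verify that the Gabriel quiver of $\operatorname{End}_{\mathcal{C}_{w_0}}(T_\ii)^{\operatorname{op}}$ coincides with $\Gamma_\ii$ as defined in Definition \ref{clustergraph}. The two combinatorial conditions $(i)$ and $(ii)$ in that definition are exactly the conditions under which an irreducible morphism between the summands $M_{\ii,j}$ and $M_{\ii,s}$ exists and has the respective orientation; these morphisms can be read off from the standard filtrations of the $M_{\ii,j}$ in the preprojective module category, where the edge of type $(i)$ records the nested inclusion from step $j$ to its successor $j^+$ and the edge of type $(ii)$ records the compensating morphism between interleaved prefixes.

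Next I would apply Keller's theorem on Ginzburg DG algebras, together with the Hom-finite $2$-Calabi-Yau property of $\mathcal{C}_{w_0}$, to obtain that $\operatorname{End}_{\mathcal{C}_{w_0}}(T_\ii)^{\operatorname{op}}$ is isomorphic to a Jacobian algebra $\jacobalg{\Gamma_\ii, W}$ for some potential $W$ determined by the $A_\infty$-structure on the graded $\operatorname{Ext}$-algebra of $T_\ii$. The delicate point is to identify $W$ with the face potential $S(\ii)$ up to right-equivalence: one computes the cyclic derivatives $\partial_a S(\ii)$ for each arrow $a$ of $\Gamma_\ii$ and matches them, face by face, with the relations in $\operatorname{End}_{\mathcal{C}_{w_0}}(T_\ii)^{\operatorname{op}}$ that are forced by the preprojective relation $\sum_a [a,a^*] = 0$ acting on the filtrations of the $M_{\ii,j}$. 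The signs $\pm$ attached to clockwise versus anti-clockwise faces are fixed by orienting each $2$-cell of $\Gamma_\ii$ consistently with the alternation between the two kinds of irreducible morphisms above.

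Once $\jacobalg{\Gamma_\ii, S(\ii)} \cong \operatorname{End}_{\mathcal{C}_{w_0}}(T_\ii)^{\operatorname{op}}$ is in place, I would conclude by invoking Iyama--Reiten mutation of cluster-tilting objects in Hom-finite $2$-CY categories together with Keller--Yang's compatibility between mutation of cluster-tilting objects and QP-mutation: since $\mu_k(T_\ii)$ is again a basic cluster-tilting object, its endomorphism algebra has a $2$-acyclic Gabriel quiver, so the QP $\mu_k(\Gamma_\ii, S(\ii))$ is $2$-acyclic. Iterating, every mutation sequence remains within the class of Jacobian algebras of cluster-tilting objects in a Hom-finite $2$-CY category, hence produces a $2$-acyclic quiver, which is exactly non-degeneracy. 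The main obstacle I expect is the face-by-face identification of the potential in the third step: the shape of $\Gamma_\ii$ and the existence of \emph{some} potential making it Jacobian follow comfortably from $2$-CY technology, but pinning down the signed face sum $S(\ii)$ requires either an explicit $A_\infty$ computation in $\mathcal{C}_{w_0}$ or a direct presentation by generators and relations of $\operatorname{End}_{\mathcal{C}_{w_0}}(T_\ii)^{\operatorname{op}}$ that makes the preprojective relations visible locally at each face of $\Gamma_\ii$.
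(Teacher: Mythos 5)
The paper does not contain an internal proof of this proposition; it is imported as a citation to Kulkarni \cite{K19}, so there is no ``paper's own proof'' to compare against line by line. Your proposed route---realizing $\jacobalg{\Gamma_\ii, S(\ii)}$ as $\operatorname{End}_{\mathcal{C}_{w_0}}(T_\ii)^{\operatorname{op}}$ for a cluster-tilting object $T_\ii$ in a Hom-finite $2$-Calabi--Yau Frobenius category over the preprojective algebra, and then deducing non-degeneracy from Iyama--Yoshino mutation of cluster-tilting objects combined with Buan--Iyama--Reiten--Smith/Keller--Yang compatibility between categorical mutation and QP-mutation---is a standard and in principle legitimate strategy, and it is a genuinely different flavor of argument from the dimer-model approach suggested by the title of \cite{K19}.

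That said, the proposal has a genuine gap, and it is precisely the one you concede at the end: you do not establish the right-equivalence between the signed face potential $S(\ii)$ and the potential $W$ attached to $\operatorname{End}_{\mathcal{C}_{w_0}}(T_\ii)^{\operatorname{op}}$. This is not a technicality that can be deferred---it is the entire content of the statement, since non-degeneracy of \emph{some} $W$ on $\Gamma_\ii$ is much weaker than non-degeneracy of the specific $S(\ii)$ that the rest of the paper (Theorem \ref{stringasFpol}) feeds into Theorem \ref{potasfpol}. Moreover, the Jacobian-ness of $\operatorname{End}_{\mathcal{C}_{w_0}}(T_\ii)^{\operatorname{op}}$ does not follow from Hom-finiteness and the $2$-CY property alone; Buan--Iyama--Reiten--Smith takes as a hypothesis that $\operatorname{End}(T)\cong\jacobalg{Q,W}$ before propagating this under mutation, so the only way your argument closes is by producing an explicit presentation of $\operatorname{End}(T_\ii)^{\operatorname{op}}$ by the relations $\partial_a S(\ii)=0$---which is exactly the hard step you have skipped. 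Finally, be careful that Iyama--Yoshino mutation is defined only at non-projective-injective summands, so the categorical argument, even once completed, certifies $2$-acyclicity only after mutation sequences at the mutable vertices of $\Gamma_\ii$; if the proposition is to be read in the full Derksen--Weyman--Zelevinsky sense of Definition \ref{Def:nondegpot} (mutation at \emph{any} vertex, including the frozen $\ell$ with $\ell^+=N+1$, which correspond to the projective-injective summands), an additional argument would be required.
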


\begin{ex} Let $G=\text{SL}_5(\mathbb{C})$ and $\ii=(1,2,1,3,2,1,4,3,2,1)\in R(w_0)$. The quiver $\Gamma_{\ii}$ looks as follows:

$$\xymatrix{
& & & 7 \ar[dl]^{o} \\
& & 4 \ar[rr]^{n} \ar[dl]^{k} & & 8 \ar[dl]^{m} \\
& 2 \ar[rr]^{i} \ar[dl]^{d} && 5 \ar[rr]^{j}  \ar[ul]_{\ell} \ar[dl]^{f} && 9 \ar[dl]^{h} \\
1 \ar[rr]^{a} && 3 \ar[rr]^{b}  \ar[ul]_{e} && 6 \ar[rr]^{c} \ar[ul]_{g} && 10
}$$
with $\{7,8,9,10\}$ the set of frozen vertices. The non-degenerate potential from Proposition \ref{nondegred} is given by
$$S(\ii)= jgh+ief+n\ell m-bfg-ade-ike.$$
\end{ex}

\subsection{Tropicalization}

We recall the notion of tropicalization from \cite{GHKK}. Let $T=(\mathbb{C}^{*})^s$ be an algebraic torus. We denote by $[T]_{trop}=\Hom(\mathbb{C}^*,T)=\mathbb{Z}^s$ its cocharacter lattice. A positive (i.e. subtraction-free) rational map $f$ on $T$, $f(x)=\frac{\sum_{u\in I} a_u x^u}{\sum_{u\in J} b_u x^u}$ with {$a_u, b_u \in \mathbb{R_{+}}$}, gives rise to a piecewise-linear map
\begin{equation*}
[f]_{trop} : [{T}]_{trop} \rightarrow [\mathbb{C}^*]_{trop}=\mathbb{Z}, \quad x\mapsto \min_{u \in I} \left\langle x,u\right\rangle - \min_{u\in J} \left\langle x,u\right\rangle,
\end{equation*}
where $\langle \cdot ,\cdot \rangle$ is the standard inner product of $\mathbb{Z}^s$. We call $[f]_{trop}$ the \emph{tropicalization} of $f$. 

For a positive rational map
$$f=(f_1, \dots, f_s) : (\mathbb{C}^*)^q \dashrightarrow (\mathbb{C}^*)^{s}$$
we define its tropicalization as 
$$[f]_{trop}:=([f_1]_{trop}, \ldots ,[f_{s}]_{trop}): [(\mathbb{C}^*)^q]_{trop} \rightarrow [(\mathbb{C}^*)^{s}]_{trop}.$$

\subsection{String cones}

Let $B(\infty)$ be the crystal basis of $U_q^-$ in the sense of \cite{Ka94} with partially inverse Kashiwara operators $\tilde{e}_i$, $\tilde{f}_i$ for $i\in I$. For each reduced word $\mathbf{i}=(i_1,i_2,\ldots,i_N)\in R(w_0)$ we define the $\ii$-string datum of an element $x\in B(\infty)$ as follows.

\begin{defi} 
An \emph{$\mathbf i$-string datum} $\text{str}_{\ii}(x)$ of $x \in B(\infty)$ is defined as a tuple $(x_1,x_2,\ldots,x_N)\in \mathbb{Z}_{\ge 0}^{N}$ determined inductively by

\begin{align*}
x_1 &= \displaystyle\max_{ k \in \mathbb{Z}_{\ge 0}} \{\tilde{e}_{i_1}^k x \in B(\infty)\}, \\
x_2 &= \displaystyle\max_{ k \in \mathbb{Z}_{\ge 0}} \{ \tilde{e}_{i_2}^{k}\tilde{e}_{i_1}^{x_1} x \in B(\infty)\}, \\
& \vdots \\
x_N &= \displaystyle\max_{ k \in \mathbb{Z}_{\ge 0}} \{ \tilde{e}_{i_N}^k \tilde{e}_{i_{N-1}}^{x_{N-1}}\cdots \tilde{e}_{i_1}^{x_1} x \in B(\infty) \}.
\end{align*}

By \cite{BZ,Lit} the subset 
$$\mathcal{C}_{\mathbf{i}}:=\{\text{str}_{\mathbf i}(x) \mid x \in B(\infty)\}\subset \mathbb{Z}_{\ge 0}^{N}$$ 
is the set of integer points of a polyhedral cone called the \emph{string cone associated to $\mathbf i$}. 
\end{defi}

Following \cite{BZ} we introduce positive rational functions $\Psi^{\ii}_{\mathbf{j}} : (\mathbb{C}^*)^N \dashrightarrow (\mathbb{C}^*)^N$ such that the tropicalization $[\Psi^{\ii}_{\mathbf{j}}]_{trop}$ gives the transition map between the string cones associated to different reduced words $\ii, \mathbf{j}\in R(w_0)$.
\begin{defi} We define $\Psi^{\ii}_{\mathbf{j}} : (\mathbb{C}^*)^N \dashrightarrow (\mathbb{C}^*)^N$ as follows. If $\mathbf{j}\in R(w_0)$ is obtained from $\ii\in R(w_0)$ by a $3$-term braid move such that $(j_k,j_{k+1},j_{k+2})=(i_{k+1},i_k,i_{k+2})$, and $j_k=j_{k+2}$ and the entry $a_{i,j}$ of the Cartan matrix of $G$ is equal to $-1$. we set $y=\Psi^{\ii}_{\mathbf{j}}(x)$ with 
	$$
	\Psi^{\ii}_{\mathbf{j}}(x) = \left(x_1, \dots, x_{k-2}, \frac{x_{k}x_{k+1}}{x_{k-1}x_{k+1}+x_k}, x_{k-1}x_{k+1}, \frac{x_{k+1}x_{k-1}+x_k}{x_{k+1}}, x_{k+2}, \dots, x_N \right).
	$$
If $\mathbf{j}\in R(w_0)$ is obtained from $\ii\in R(w_0)$ by a $2$-term braid move such that $(j_k,j_{k+1})=(i_{k+1},i_k)$, we set $y=\Psi^{\ii}_{\mathbf{j}}(x)$ with 
	$$\Psi^{\ii}_{\mathbf{j}} \left(x_1, \dots, x_N \right) = \left( x_1, \dots, x_{k-1}, x_{k+1}, x_k, x_{k+2}, \dots, x_N\right).
	$$
	For arbitrary $\ii, \mathbf{j}\in R(w_0)$ we define $\Psi^{\ii}_{\mathbf{j}}:(\mathbb{C}^*)^N \rightarrow (\mathbb{C}^*)^N$ as the composition of the transition maps corresponding to a sequence of $2-$term and $3-$term braid moves transforming $\ii$ into $\mathbf{j}$.
\end{defi}
Recall that $[(\mathbb{C}^*)^N]_{trop}=\mathbb{Z}^{N}$. By \cite{Lit}, we have for every $x\in B(\infty)$
\begin{equation}\label{welldefi}
\text{str}_{\mathbf{j}}(x)=[\Psi^{\ii}_{\mathbf{j}}]_{trop}(\text{str}_{\ii}(x)).
\end{equation}

Defining inequalities of the string cones may also by obtained by the tropicalization of a positive function defined as follows.

\begin{defi}\label{varsigma} Let $\mathbf{i},\mathbf{j}\in R(w_0)$ and $i\in I$. We define $\varsigma_{\mathbf{i},i}$ to be the rational function on $(\mathbb{C}^*)^{N}$ uniquely determined by the following two conditions.
\begin{enumerate}
\item If $i_N=i$, then $\varsigma_{\mathbf{i},i}(x)={x_N}$.
\item We have
$\varsigma_{\mathbf{i},i}\circ \Psi^{\mathbf{j}}_{\mathbf{i}}=\varsigma_{\mathbf{j},i}.$
\end{enumerate}
\end{defi}

Note that the map $\varsigma_{\mathbf{i},i}$ is well-defined since $ \Psi^{\mathbf{j}}_{\mathbf{i}}$ does not depend on the chosen sequence of $2-$term and $3-$term braid moves transforming $\mathbf{i}$ into $\mathbf{j}$ by \eqref{welldefi}. In \cite{GKS20} we have shown that the tropicalization of the sum of $\varsigma_{\mathbf{i},i}$, $i\in I$, gives rise to the string cone inequalities:

\begin{prop}\cite[Proposition 3.5]{GKS20}\label{stringpos} For $\mathbf{i} \in R(w_0)$, we have
\begin{equation}
\mathcal{C}_{\mathbf{i}}=\{x \in \mathbb{R}^N \mid [\varsigma_{\mathbf{i},i}]_{trop}(x)\ge 0 \text{ for all }i\in I.\}.
\end{equation}
\end{prop}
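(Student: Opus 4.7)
The plan is to reduce the equality, via mutation-equivariance of both sides under the tropicalized braid moves, to a verification at a single reference reduced word. One inclusion then becomes immediate from the definition of string data, and the other reduces to matching the tropicalized $\varsigma$-functions against a known polyhedral description of the string cone.

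First I would establish compatibility of both sides with braid moves. Condition (2) of Definition \ref{varsigma} tropicalizes to $[\varsigma_{\mathbf{j},i}]_{trop} = [\varsigma_{\mathbf{i},i}]_{trop} \circ [\Psi^{\mathbf{j}}_{\mathbf{i}}]_{trop}$, so setting $S_{\mathbf{i}} := \{x \in \mathbb{R}^N : [\varsigma_{\mathbf{i},i}]_{trop}(x) \geq 0 \text{ for all } i \in I\}$ one obtains $[\Psi^{\mathbf{j}}_{\mathbf{i}}]_{trop}(S_{\mathbf{j}}) = S_{\mathbf{i}}$. On the string-cone side, equation \eqref{welldefi} together with the piecewise-linearity of the tropical braid moves gives $[\Psi^{\mathbf{i}}_{\mathbf{j}}]_{trop}(\mathcal{C}_{\mathbf{i}}) = \mathcal{C}_{\mathbf{j}}$ as real cones. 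By Matsumoto's theorem, any two reduced words are linked by a chain of braid moves, so it suffices to prove $\mathcal{C}_{\mathbf{i}_0} = S_{\mathbf{i}_0}$ for one convenient reference word $\mathbf{i}_0$.

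The inclusion $\mathcal{C}_{\mathbf{i}} \subseteq S_{\mathbf{i}}$ is in fact immediate for every $\mathbf{i}$: for each $i \in I$ pick a reduced word $\mathbf{i}^{(i)}$ ending with $i$, so that $\varsigma_{\mathbf{i}^{(i)},i}(x) = x_N$; then for any $b \in B(\infty)$,
\[
[\varsigma_{\mathbf{i},i}]_{trop}(\text{str}_{\mathbf{i}}(b)) = \bigl([\Psi^{\mathbf{i}^{(i)}}_{\mathbf{i}}]_{trop}(\text{str}_{\mathbf{i}}(b))\bigr)_N = \text{str}_{\mathbf{i}^{(i)}}(b)_N \geq 0,
\]
since the string-datum coordinates are non-negative by construction. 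The inequality extends by piecewise-linearity from the integer points $\text{str}_{\mathbf{i}}(b)$ to all of $\mathcal{C}_{\mathbf{i}}$.

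For the reverse inclusion $S_{\mathbf{i}_0} \subseteq \mathcal{C}_{\mathbf{i}_0}$, I would choose $\mathbf{i}_0$ adapted to an orientation of the Dynkin diagram of $G$, for which Berenstein-Zelevinsky and Littelmann supply an explicit list of the facet-defining (``peeling'') inequalities of $\mathcal{C}_{\mathbf{i}_0}$, one for each $i \in I$. The main obstacle is then to verify that these are precisely the inequalities cut out by the piecewise-linear functions $[\varsigma_{\mathbf{i}_0,i}]_{trop}$. Concretely, one unpacks each $[\varsigma_{\mathbf{i}_0,i}]_{trop}$ by composing the tropical braid moves connecting $\mathbf{i}^{(i)}$ to $\mathbf{i}_0$ and identifies the resulting min-of-linear expressions with the BZ/Littelmann facets. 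This combinatorial matching is delicate and is where the explicit structure of the tropicalized $3$-term braid move enters the argument.
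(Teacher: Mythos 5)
Note first that this proposition is not proved in the paper: it is imported verbatim from \cite[Proposition 3.5]{GKS20}, so there is no internal argument to compare your attempt against. Evaluating the proposal on its own terms, the scaffolding is sound but incomplete. The equivariance step uses, implicitly, that tropicalization is functorial under composition of subtraction-free rational maps, which is true and worth stating explicitly when passing from condition (2) of Definition \ref{varsigma} to its tropical shadow; granting that, the reduction via Matsumoto's theorem to a single reference word is fine, and the forward inclusion $\mathcal{C}_{\mathbf{i}}\subseteq S_{\mathbf{i}}$ via $[\varsigma_{\mathbf{i},i}]_{trop}(\operatorname{str}_{\mathbf{i}}(b))=\operatorname{str}_{\mathbf{i}^{(i)}}(b)_N\ge 0$, then extended from lattice points to the whole real cone by positive homogeneity and density, is correct.

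The substance of the proposition, however, is the reverse inclusion $S_{\mathbf{i}_0}\subseteq\mathcal{C}_{\mathbf{i}_0}$, and your treatment of it stops exactly where the work begins: you say one should identify the linear pieces of the $[\varsigma_{\mathbf{i}_0,i}]_{trop}$ with the Berenstein--Zelevinsky/Littelmann facet inequalities and then acknowledge that this ``combinatorial matching is delicate'' without carrying it out. That matching is not a routine check even for a word adapted to a Dynkin orientation; one has to show that \emph{every} facet of $\mathcal{C}_{\mathbf{i}_0}$ (parametrised, e.g., by Berenstein--Zelevinsky $i$-trails, or in type $A$ by Gleizer--Postnikov rigorous paths, cf.\ the discussion around Theorem \ref{stringasFpol}) actually occurs among the linear pieces produced by unwinding the chain of tropical $2$- and $3$-term braid moves from $\mathbf{i}^{(i)}$ to $\mathbf{i}_0$, and this requires an explicit induction on the length of that chain together with bookkeeping of how the min is attained. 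As written, the proposal establishes only $\mathcal{C}_{\mathbf{i}}\subseteq S_{\mathbf{i}}$; the opposite inclusion, which is the actual content of \cite[Proposition 3.5]{GKS20}, remains unproved, so there is a genuine gap.
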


\subsection{String cones from tropicalized $F-$polynomials}

In the following we recall from \cite{GKS20} how to recover the string cone inequalities from the Landau-Ginzburg potential $W$ on $\mathcal{X}$ with initial datum $\mathcal{X}_{\ii}$ given by $\Gamma_{\ii}$. 

\begin{defi}\label{dualCa} We define $\widehat{\text{CA}}_{\ii}\in \text{Hom}((\mathbb{C}^*)^{N}, {\mathcal{X}}_{{\ii}})$ as follows
$$\widehat{\text{CA}}_{\ii}(x)_j=\displaystyle\prod_{s\in [1,N]}X(\ii)_{s}^{{\{j,s\}}},$$
where $\{j,s\}:=\begin{cases} 1 & \text {if }j<s<j^+ \text{ and }i_j, i_{s} \text{ are connected in the Dynkin diagram of }G, \\ -1 & \text{if }s=j \text{ or }s=j^+, \\ 0 & \text{ else.} \end{cases}$
\end{defi}

\begin{prop}\label{iso} The map $\widehat{\text{CA}}_{\ii}\in \text{Hom} ((\mathbb{C}^*)^{N}, \mathcal{X}_{{\ii}})$ is an isomorphism of algebraic tori and satisfies
$\varsigma_{\mathbf{i},i}=\restr{W_i}{\mathcal{X}_{\ii}}\circ \widehat{\text{CA}}_{\ii}$.
\end{prop}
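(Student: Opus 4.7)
The map $\widehat{\text{CA}}_{\ii}$ is a monomial map between tori of the same dimension $N$, so it is an isomorphism of algebraic tori if and only if the integer matrix $C_{\ii}=(\{j,s\})_{j,s\in[1,N]}$ is unimodular. Reading off Definition \ref{dualCa}, for $s<j$ we have neither $s=j$, nor $s=j^+$ (as $j^+>j$), nor $j<s<j^+$, so $\{j,s\}=0$; and $\{j,j\}=-1$. Hence $C_{\ii}$ is upper triangular with all diagonal entries equal to $-1$, so $\det C_{\ii}=(-1)^N$, and $\widehat{\text{CA}}_{\ii}$ is an isomorphism of tori.

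For the equality $\varsigma_{\ii,i}=\restr{W_i}{\mathcal{X}_{\ii}}\circ\widehat{\text{CA}}_{\ii}$, the strategy is to verify that the right-hand side satisfies the two conditions of Definition \ref{varsigma}, which characterize $\varsigma_{\ii,i}$ uniquely. The \emph{base case} $i_N=i$ is immediate: then $\ell_i=N$ is a sink of $\Gamma_{\ii}$, so the initial seed is optimized for $\ell_i$, and Definition \ref{potdefi} gives $\restr{W_i}{\mathcal{X}_{\ii}}=X_N^{-1}$. On the other hand, since $N^+=N+1$ and the interval $(N,N^+)$ is empty, the only nonzero exponent in $\widehat{\text{CA}}_{\ii}(x)_N=\prod_s x_s^{\{N,s\}}$ is $\{N,N\}=-1$, giving $\widehat{\text{CA}}_{\ii}(x)_N=x_N^{-1}$. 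Therefore $\restr{W_i}{\mathcal{X}_{\ii}}\circ\widehat{\text{CA}}_{\ii}(x)=(x_N^{-1})^{-1}=x_N$, as required.

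The \emph{compatibility condition} $(\restr{W_i}{\mathcal{X}_{\ii}}\circ\widehat{\text{CA}}_{\ii})\circ\Psi^{\mathbf{j}}_{\ii}=\restr{W_i}{\mathcal{X}_{\mathbf{j}}}\circ\widehat{\text{CA}}_{\mathbf{j}}$ is checked by reducing to a single braid move between $\ii$ and $\mathbf{j}$, in which case the key input is that the quivers $\Gamma_{\ii}$ and $\Gamma_{\mathbf{j}}$ are related either by a relabeling (in the $2$-term case) or by a single quiver mutation at a determined vertex $v$ (in the $3$-term case). In the latter case, the $\mathcal{X}$-cluster mutation $\check{\mu}_v^{\ast}$ identifies $\mathcal{X}_{\ii}$ and $\mathcal{X}_{\mathbf{j}}$ as open subtori of the single cluster variety $\mathcal{X}$, so by the very construction of $W_i$ as a regular function on $\mathcal{X}$ we have $\restr{W_i}{\mathcal{X}_{\mathbf{j}}}=\restr{W_i}{\mathcal{X}_{\ii}}\circ\check{\mu}_v^{\ast}$. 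The remaining task is the monomial identity $\widehat{\text{CA}}_{\ii}=\check{\mu}_v^{\ast}\circ\widehat{\text{CA}}_{\mathbf{j}}\circ\Psi^{\mathbf{j}}_{\ii}$, which is a direct computation comparing the exponent matrices $C_{\ii}$ and $C_{\mathbf{j}}$ under the explicit formulas for the $\mathcal{X}$-cluster mutation at $v$ in \eqref{X-mutation} and the BZ transition map in Definition \ref{varsigma}. For the $2$-term move this collapses to a coordinate swap; for the $3$-term move the entries of $C_{\ii}$ and $C_{\mathbf{j}}$ differ only in the rows and columns indexed by $\{k-1,k,k+1,k+2\}$ (in the notation of Definition \ref{varsigma}), and the identity follows after cancellation using the Dynkin condition $a_{i_k,i_{k+1}}=-1$.

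I expect the braid-move bookkeeping in the last step to be the main obstacle: while each individual case is a mechanical check, one must carefully match the exponents $\{j,s\}$ before and after the braid move against the precise form of the $\mathcal{X}$-cluster mutation formula, and this is where the combinatorics of the quiver $\Gamma_{\ii}$ really interfaces with the Berenstein--Zelevinsky transition maps. This is essentially the content already present in \cite{GKS20}, but a full proof would require making the case analysis explicit.
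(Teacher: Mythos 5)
The paper does not actually prove Proposition~\ref{iso}: the surrounding text explicitly says ``we recall from \cite{GKS20}'' and Proposition~\ref{stringpos} is cited as [GKS20, Proposition~3.5]; Proposition~\ref{iso} is likewise imported without proof. So there is no in-paper argument to compare against, and what matters is whether your outline would actually yield a proof.

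Your outline is structurally sound and is the natural one (and matches the strategy of \cite{GKS20}): the unimodularity argument for the isomorphism part is correct and complete (for $s<j$ none of the three cases in Definition~\ref{dualCa} can occur, so the exponent matrix is upper triangular with $-1$'s on the diagonal); the verification of Definition~\ref{varsigma}(1) is also correct, including the observation that $\ell_i=N$ is a sink of $\Gamma_{\ii}$ so that $t_0$ is already optimized. Two points in the compatibility step deserve correction, however. First, calling the identity $\widehat{\text{CA}}_{\ii}=\check{\mu}_v^{\ast}\circ\widehat{\text{CA}}_{\mathbf{j}}\circ\Psi^{\bullet}_{\bullet}$ a ``monomial identity'' is misleading: $\check{\mu}_v^{\ast}$ and the $3$-term BZ transition map both contain factors of the form $(1+\cdots)$, so neither is monomial; the actual content of this step is precisely that these two non-monomial factors cancel against each other, leaving the monomial map $\widehat{\text{CA}}_{\ii}$, and this cancellation (governed by the chamber adjacent to the mutated vertex $v$ and the Dynkin condition $a_{i_k,i_{k+1}}=-1$) is the nontrivial heart of the proof, not mere bookkeeping. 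Second, chasing the diagram through Definition~\ref{varsigma}(2) and your own formula $\restr{W_i}{\mathcal{X}_{\mathbf{j}}}=\restr{W_i}{\mathcal{X}_{\ii}}\circ\check{\mu}_v^{\ast}$ gives $\widehat{\text{CA}}_{\ii}=\check{\mu}_v^{\ast}\circ\widehat{\text{CA}}_{\mathbf{j}}\circ\Psi^{\ii}_{\mathbf{j}}$ rather than with $\Psi^{\mathbf{j}}_{\ii}$; you should double-check your subscript/superscript against the paper's convention that $\Psi^{\ii}_{\mathbf{j}}$ carries $\ii$-coordinates to $\mathbf{j}$-coordinates. You also tacitly use that a $3$-term braid move corresponds to a single quiver mutation of $\Gamma_{\ii}$ at a determinate vertex (plus a relabeling); this is standard from the Berenstein--Fomin--Zelevinsky cluster structure on $L^{e,w_0}$ but should be cited or established. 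With these points made precise and the $3$-term cancellation carried out explicitly, the proposal becomes a complete proof.
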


\begin{ex}\label{ex1} Let $G=\text{SL}_3(\mathbb{C})$ and $\ii=(1,2,1)\in R(w_0)$. We have
$$\widehat{\text{CA}}_{\ii}(x_1,x_2,x_3)=\left(\frac{x_2}{x_1x_3},\frac{x_3}{x_2},\frac{1}{x_3}\right).$$
Moreover,
$$\restr{W_2}{\mathcal{X}_{\ii}}=X^{-1}_2+X^{-1}_1X^{-1}_2, \quad \restr{W_1}{\mathcal{X}_{\ii}}=X^{-1}_3$$ and therefore
$$\varsigma_{\mathbf{i},2}=\restr{W_2}{\mathcal{X}_{\ii}}\circ \widehat{\text{CA}}_{\ii}=x_1+\frac{x_2}{x_3}, \quad \varsigma_{\mathbf{i},1}=\restr{W_1}{\mathcal{X}_{\ii}}\circ \widehat{\text{CA}}_{\ii}=x_3.$$
We recover the well-known string cone inequalities (\cite[Corollary 2]{Lit})
$$\mathcal{C}_{\mathbf{i}}=\{x \in \mathbb{R}^3 \mid x_1 \ge 0, \ x_2\ge x_3 \ge 0\}.$$
\end{ex}

The following theorem gives an expression of the defining inequalities of string cones as tropicalizations of dual $F$-polynomials without constant term of projective QP-representations.

Recall that the set of frozen vertices of $\Gamma_{\ii}$ is given by $\{\ell_i \mid i \in [1,n]\}$, where $\ell_i=\max \{1 \le j \le N \mid i_j=i\}$. Also recall from Section \ref{partcomp} that $(\Gamma_{\ii})^{\ell_i}$ is the full subgraph of $\Gamma_{\ii}$ spanned by all mutable vertices and $\ell_i$. Recall the non-degenerate potential $S_{\ii}$ on $\Gamma_{\ii}$ from Proposition \ref{nondegred} which restricts to a non-degenerate potential on $(\Gamma_{\ii})^{\ell_i}$, also called $S_{\ii}$ by slight abuse of notation.

\begin{thm}\label{stringasFpol} Let $\ii\in R(w_0)$. We have
$$\mathcal{C}_{\mathbf{i}}=\{x \in \mathbb{R}^N \mid [F^{\vee}_{{P}_{(Q_\ii, S_{\ii})}(\ell_i)}-1]_{trop}([\widehat{\text{CA}}_{\ii}]_{trop}(x)) \ge 0 \text{ for all } 1\le i \le n\},$$

where ${P}_{(Q_\ii, S_{\ii})}(\ell_i)$ is the indecomposable projective QP-representation of $(Q_{\ii}:=(\Gamma_{\ii})^{\ell_i},S_{\ii})$ to the vertex $\ell_i$.
\end{thm}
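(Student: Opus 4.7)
The proof is a direct chain of substitutions built from three inputs already established in the paper: the inequality description of the string cone in terms of the tropicalized functions $\varsigma_{\ii,i}$ (Proposition \ref{stringpos}), the factorization $\varsigma_{\ii,i} = \restr{W_{\ell_i}}{\mathcal{X}_{\ii}} \circ \widehat{\text{CA}}_{\ii}$ (Proposition \ref{iso}), and the representation-theoretic expression for $\restr{W_{\ell_i}}{\mathcal{X}_{\ii}}$ as a dual $F$-polynomial of a projective QP-representation (Theorem \ref{potasfpol}). The plan is to verify that Theorem \ref{potasfpol} is applicable to $(Q_{\ii},S_{\ii})$ and then to chain these three identifications together.

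First, I will check that the hypothesis of Theorem \ref{potasfpol} is satisfied for the quiver $Q_{\ii}=(\Gamma_{\ii})^{\ell_i}$ endowed with the restriction of the non-degenerate potential from Proposition \ref{nondegred}: for each $i\in I$ I need a seed $t_{\ell_i}$ in which $\ell_i$ is a sink. Such a seed is obtained by passing from $\ii$, via a sequence of braid moves, to a reduced word $\mathbf j\in R(w_0)$ ending in the letter $i$; in $\Gamma_{\mathbf j}$ the vertex $\ell_i = N$ is automatically a sink by Definition \ref{clustergraph}, and the sequence of quiver mutations realizing the braid moves produces the required optimized seed. With the hypothesis verified, Theorem \ref{potasfpol} yields
\begin{equation*}
\restr{W_{\ell_i}}{\mathcal{X}_{\ii}} \;=\; F^{\vee}_{P_{(Q_{\ii},S_{\ii})}(\ell_i)}(X_1^{-1},\ldots,X_N^{-1}) - 1 \;=\; X_{\ell_i}^{-1}\,F^{\vee}_{\rad P_{(Q_{\ii},S_{\ii})}(\ell_i)}(X_1^{-1},\ldots,X_N^{-1}),
\end{equation*}
so that $\restr{W_{\ell_i}}{\mathcal{X}_{\ii}}$ is a subtraction-free Laurent polynomial in the $X_j$, and its tropicalization is well defined; by the second equality this tropicalization is exactly $[F^{\vee}_{P_{(Q_\ii,S_\ii)}(\ell_i)}-1]_{trop}$.

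The rest of the argument is formal. Tropicalization is functorial with respect to composition of positive rational maps, and $\widehat{\text{CA}}_{\ii}$ is a monomial isomorphism of tori, so composing Proposition \ref{iso} with $\widehat{\text{CA}}_{\ii}$ and tropicalizing gives
\begin{equation*}
[\varsigma_{\ii,i}]_{trop}(x) \;=\; \bigl[\restr{W_{\ell_i}}{\mathcal{X}_{\ii}}\bigr]_{trop}\bigl([\widehat{\text{CA}}_{\ii}]_{trop}(x)\bigr) \;=\; [F^{\vee}_{P_{(Q_{\ii},S_{\ii})}(\ell_i)}-1]_{trop}\bigl([\widehat{\text{CA}}_{\ii}]_{trop}(x)\bigr)
\end{equation*}
for every $i\in I$ and every $x\in\mathbb{R}^N$. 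Substituting this equality into the characterization of $\mathcal{C}_{\ii}$ supplied by Proposition \ref{stringpos} yields precisely the claimed formula.

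The only non-bookkeeping step is the verification that each frozen vertex $\ell_i$ admits an optimized seed, which I expect to be the main obstacle; once this is in place (either by the braid-move argument sketched above or by appealing to the established cluster structure on $L^{e,w_0}$), the result follows by pure substitution, with no further representation-theoretic input beyond Theorem \ref{potasfpol}.
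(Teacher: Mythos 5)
Your proof is correct and follows essentially the same chain of substitutions as the paper's own argument: Proposition \ref{stringpos} for the inequality description of $\mathcal{C}_{\ii}$, Proposition \ref{iso} for $\varsigma_{\ii,i}=\restr{W_{\ell_i}}{\mathcal{X}_{\ii}}\circ\widehat{\text{CA}}_{\ii}$, Proposition \ref{nondegred} for non-degeneracy, and Theorem \ref{potasfpol} for the $F$-polynomial expression. The one thing you add is an explicit braid-move verification that each frozen vertex $\ell_i$ has an optimized seed; the paper's proof tacitly assumes this (it is already needed for $W_{\ell_i}$ in Proposition \ref{iso} to be defined at all), so making it explicit is a welcome but minor clarification rather than a different route.
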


\begin{proof}
We have by Proposition \ref{stringpos} that
$$ \mathcal{C}_{\mathbf{i}}=\{x \in \mathbb{R}^N \mid [\varsigma_{\mathbf{i},i}]_{trop}(x)\ge 0 \text{ for all }i\in I.\}.$$
Let $x\in \mathbb{R}^N$. By Proposition \ref{iso}
$$[\varsigma_{\mathbf{i},i}]_{trop}(x)=[\restr{W_i}{\mathcal{X}_{\ii}}]_{trop}\circ [\widehat{\text{CA}}_{\ii}]_{trop}(x).$$
We have that $((\Gamma_{\ii})^{\ell_i},S(\ii))$ is non-degenerate by Proposition \ref{nondegred}, hence we may apply Theorem \ref{potasfpol} which yields
$$\restr{W_i}{\mathcal{X}_{\ii}}=F^{\vee}_{{P}_{(Q_\ii, S_{\ii})}(\ell_i)}-1$$
implying
$$[\restr{W_i}{\mathcal{X}_{\ii}}]_{trop}=[F^{\vee}_{{P}_{(Q_\ii, S_{\ii})}(\ell_i)}-1]_{trop}$$
which allows to conclude.
\end{proof}

\begin{ex} Let $G=\text{SL}_3(\mathbb{C})$ and $\ii=(1,2,1)\in R(w_0)$. The quiver $\Gamma_{\ii}$ looks as follows:

$$\xymatrix{
& 2 \ar[dl]_{b}  \\
1 \ar[rr]^{a} && 3 
}$$
with $\{\ell_1=2, \ell_2=3\}$ the set of frozen vertices and is thus the quiver from Example \ref{ex:SL3}. 

By Theorem \ref{stringasFpol}, we get
$$\mathcal{C}_{\mathbf{i}}=\{x\in \mathbb{R}^3 \mid -([\widehat{\text{CA}}_{\ii}]_{trop}(x))_3 \ge 0, \ -([\widehat{\text{CA}}_{\ii}]_{trop}(x))_2-([\widehat{\text{CA}}_{\ii}]_{trop}(x))_1 \ge 0, \ -([\widehat{\text{CA}}_{\ii}]_{trop}(x))_2 \ge 0 \}$$
and hence (see Example \ref{ex1})
$$\mathcal{C}_{\mathbf{i}}=\{x\in \mathbb{R}^3\mid x_3\ge 0, x_2\ge x_3, \ x_1\ge 0\}.$$
\end{ex}
Keeping the notation of Theorem \ref{stringasFpol}, in general the inequalities of $\mathcal{C}_{\mathbf{i}}$ obtained in this way may contain a lot of redundancies (see \cite{KS}). However, we get a non-redundant set of inequalities if we restrict ourselves to the vertices of the Newton polytope of $F^{\vee}_{{P}_{(Q_\ii, S_{\ii})}(\ell_i)}-1$. In the case that $F^{\vee}_{{P}_{(Q_\ii, S_{\ii})}(\ell_i)}-1$ is square-free, which means it has no multiple root over $\mathbb{C}$, the tropicalized summands of $F^{\vee}_{{P}_{(Q_\ii, S_{\ii})}(\ell_i)}-1$ yield an irredundant set of inequalities as explained in the following proposition. 

\begin{prop}
The map $[\widehat{\text{CA}}_{\ii}]_{trop}$ induces a one-to-one correspondence 
$$v\in \mathbb{Z}_{\ge 0}^N \mapsto \displaystyle\sum_{j=1}^{N}v_j(\widehat{\text{CA}}_{\ii}(x))_j \ge 0$$
between the the vertices of the Newton polytope of $F^{\vee}_{\mathcal{P}(\ell)}(X^{-1}(\ii),\ldots,X^{-1}(\ii))-1$ and facet defining inequalities of the string cone $\mathcal{C}_{\mathbf{i}}$ arising from the tropicalization of $\varsigma_{\mathbf{i},i}$.

Moreover, if $F^{\vee}_{\mathcal{P}_{((\Gamma_\ii)^{\ell_i}, S(\ii))}(\ell_i)}-1$ is square-free, then all coefficients of $F^{\vee}_{{P}_{(Q_\ii, S_{\ii})}(\ell_i)}$ are equal to $1$ and the vertices of the Newton polytope of $F^{\vee}_{{P}_{(Q_\ii, S_{\ii})}(\ell_i)}-1$ are given by the non-zero vectors $v=(v_j)\in \mathbb{Z}_{\ge 0}^N$ such that $\prod_{j=1}^{N}u_j^{v_j}$ is a monomial of $F^{\vee}_{{P}_{(Q_\ii, S_{\ii})}(\ell_i)}(u_1,\ldots,u_n)-1$.
\end{prop}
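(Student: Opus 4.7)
The plan is to substitute the expressions furnished by Proposition~\ref{iso} and Theorem~\ref{potasfpol} into $\varsigma_{\mathbf{i},i}$, tropicalize the resulting positive Laurent polynomial monomial by monomial, and then classify the facet-defining inequalities by a convex-geometric extremality argument. By Proposition~\ref{iso}, $\varsigma_{\mathbf{i},i}=\restr{W_i}{\mathcal{X}_{\ii}}\circ \widehat{\text{CA}}_{\ii}$, and by Theorem~\ref{potasfpol}, $\restr{W_i}{\mathcal{X}_{\ii}}=F^{\vee}_{P(\ell_i)}(X^{-1})-1$. Expanding gives
\[
\varsigma_{\mathbf{i},i}(x)=\sum_{v\in\mathcal{E}}c_v\,\widehat{\text{CA}}_{\ii}(x)^{-v},\qquad c_v:=\chi(\Gr^v P(\ell_i)),
\]
where $\mathcal{E}\subset\mathbb{Z}_{\ge 0}^N\setminus\{0\}$ is the set of exponents with nonzero coefficient. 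Assuming positivity of the $c_v$ in this Lie-theoretic setting, the tropicalization of a sum of positive monomials is the pointwise minimum of their individual tropicalizations, and since $\widehat{\text{CA}}_{\ii}$ is a monomial isomorphism of tori with $[\widehat{\text{CA}}_{\ii}]_{\mathrm{trop}}$ a $\mathbb{Z}$-linear isomorphism,
\[
[\varsigma_{\mathbf{i},i}]_{\mathrm{trop}}(x)=\min_{v\in\mathcal{E}}L_v(x),\qquad L_v(x):=-\sum_{j=1}^N v_j\bigl([\widehat{\text{CA}}_{\ii}]_{\mathrm{trop}}(x)\bigr)_j.
\]
Thus $\{L_v\ge 0:v\in\mathcal{E}\}$ is the complete finite list of tropical inequalities arising from $\varsigma_{\mathbf{i},i}$, and the map $v\mapsto (L_v\ge 0)$ is the candidate correspondence in the proposition.

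To isolate the facet-defining inequalities I exploit that $v\mapsto L_v$ is $\mathbb{R}$-linear. If $v\in\mathcal{E}$ is a strict convex combination $v=\sum_k\lambda_k v_k$ of elements of $\mathcal{E}\setminus\{v\}$, then $L_v=\sum_k\lambda_k L_{v_k}$, so $L_v\ge 0$ is implied by $\{L_{v_k}\ge 0\}$ and is redundant; hence non-vertices of the Newton polytope $\mathrm{conv}(\mathcal{E})$ cannot contribute facets. Conversely, for a vertex $v$ of $\mathrm{conv}(\mathcal{E})$ I would choose a supporting linear functional $\varphi$ with $\varphi(v)<\varphi(v')$ for every $v'\in\mathcal{E}\setminus\{v\}$ and, transporting via the isomorphism $[\widehat{\text{CA}}_{\ii}]_{\mathrm{trop}}$, exhibit a boundary point $x_0$ of $\{L_v=0\}$ strictly satisfying every other $L_{v'}\ge 0$; this makes $L_v\ge 0$ facet-defining. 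For the second part, square-freeness of $F^{\vee}_{P(\ell_i)}-1$ forces every nonzero integer coefficient $c_v$ to equal $1$, since otherwise $|c_v|\ge 2$ combined with the factorization of $F^{\vee}_{P(\ell_i)}-1$ over $\mathbb{C}$ would introduce a repeated irreducible factor, contradicting square-freeness; then $\mathcal{E}$ coincides with the monomial exponent set of $F^{\vee}_{P(\ell_i)}-1$, and the first part identifies the Newton-polytope vertices with the desired facet-defining inequalities.

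The main obstacle I foresee is precisely the converse direction of the facet correspondence, because $\mathbb{R}$-linearity in $v$ only rules out redundancies coming from \emph{convex} combinations, whereas Farkas' lemma more generally permits redundancy through arbitrary \emph{non-negative} linear combinations; thus a vertex of the Newton polytope need not be an extreme ray of the cone $\mathrm{cone}(\mathcal{E})$ and could in principle still index a redundant inequality. Closing this gap requires exploiting the structural constraint that all $v\in\mathcal{E}$ arise as dimension vectors of quotients of the indecomposable projective $P(\ell_i)$, and in particular satisfy the componentwise bound $v\le \underline{\dim}\,P(\ell_i)$, in order to preclude the pathological ``conic-but-not-convex'' combinations that would otherwise destroy the bijection; this combinatorial control over $\mathcal{E}$ is the delicate point on which a rigorous proof hinges.
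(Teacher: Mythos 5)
Your setup through the identification $\varsigma_{\mathbf{i},i}=\restr{W_i}{\mathcal{X}_{\ii}}\circ\widehat{\text{CA}}_{\ii}$ and the reduction of the tropicalized inequality to a minimum of linear forms $L_v$ indexed by the exponent set $\mathcal E$ is sound and follows the paper's logic, and your observation that interior points of the Newton polytope yield redundant inequalities via convex combinations is also correct. However, the proposal departs from the paper in two ways, one of which contains a genuine error and the other of which leaves the central point open.

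First, your argument that square-freeness forces every coefficient to equal~$1$ is wrong. You claim that $|c_v|\ge 2$ ``combined with the factorization over $\mathbb{C}$ would introduce a repeated irreducible factor,'' but having a coefficient of absolute value at least $2$ does not produce a repeated factor: for instance $u_1+2u_2$ or $u_1^2+3u_1+2=(u_1+1)(u_1+2)$ have coefficients larger than~$1$ yet are square-free. The paper's route is entirely different and representation-theoretic: square-freeness of $F^{\vee}_{P(\ell_i)}-1$ forces the projective $P(\ell_i)$ to be \emph{thin} (dimension vector with entries in $\{0,1\}$), whence every nonempty quiver Grassmannian $\Gr^{\ee}(P(\ell_i))$ is a single point and $\chi=1$ for each such $\ee$. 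You need to argue through the module, not through divisibility of the polynomial.

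Second, the difficulty you flag yourself — that a vertex of the Newton polytope need not be an extreme ray of $\operatorname{cone}(\mathcal E)$, so convex extremality does not immediately yield facet-definition under Farkas — is a real gap, and you leave it unresolved; the structural constraint $v\le \underline{\dim}\,P(\ell_i)$ that you gesture at is not by itself enough to close it. The paper does not attempt this from scratch: the first bijection is declared a ``direct consequence of Theorem~\ref{stringasFpol},'' and the claim that in the square-free (thin) case \emph{every} monomial exponent of $F^{\vee}-1$ is a vertex of the Newton polytope is delegated to \cite[Proposition~6]{KS}, which is precisely the combinatorial input (about rigorous paths/chamber sets) needed to rule out the pathological conic redundancies you worry about. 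Without either reproducing the argument of \cite{KS} or supplying a replacement, your proof of the first and last claims remains incomplete, and the middle claim needs to be rerouted through thinness as the paper does.
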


\begin{proof}
The first claim is a direct consequence of Theorem \ref{stringasFpol}. Note that if $F^{\vee}_{{P}_{(Q_\ii, S_{\ii})}(\ell_i)}(u_1,\ldots,u_N)-1$ is square-free, ${P}_{(Q_\ii, S_{\ii})}(\ell_i)$ is thin, i.e. its dimension vector consists only of entries less or equal to one. Therefore each non-trivial quiver Grassmannian $\Gr^{\ee}({P}_{(Q_\ii, S_{\ii})}(\ell_i))$ consists only of one point. We conclude that each coefficient is equal to $1$. 

The last claim is \cite[Proposition 6]{KS}.
\end{proof}

\subsection{String cones in type $A$}
Let $G=\text{SL}_n(\mathbb{C})$. In this case we have $W\cong S_n$, the symmetric group in $n$ letters.

Following \cite{GP}, we recall the notion of wiring diagrams, which are graphical presentations of a reduced word $\ii\in R(w_0)$.

\begin{defi}\label{defi:wire} 
	Let $\ii=(i_1,i_2,\ldots i_N)\in R(w_0)$. The \emph{wiring diagram} $\mathcal{D}_{\ii}$ consists of a family of $n$ piecewise straight lines, called \emph{wires}, which can be viewed as graphs of $n$ continuous piecewise-linear functions defined on the same interval. The wires $w_1,\ldots,w_{n}$ have labels in the set $[1,n]$. Each vertex of $\mathcal{D}_{\ii}$ (i.e. an intersection of two wires) represents a letter $j$ in $\ii$. If the vertex corresponds to the letter $j\in [1,n-1]$, then $j-1$ is equal to the number of wires running below this intersection. We call this number $j$ the \emph{level of the vertex} $v$. Denote the vertex $v$ by $[p,q]$ if it is the intersection of $w_p$ and $w_q$.

	The word $\ii$ can be read off from $\mathcal{D}_{\ii}$ by reading the levels of the vertices from left to right. 
\end{defi}

\begin{ex}\label{run} Let $n=5$ and $\ii=(2,1,2,3,4,3,2,1,3,2)$. The corresponding wiring diagram $\mathcal{D}_{\ii}$ is depicted below.

 \begin{center}
	
	\begin{tikzpicture}[scale=.8]
	
	\node at (-.5,0) {$w_1$};
	\node at (-.5,1) {$w_2$};
	\node at (-.5,2) {$w_3$};
	\node at (-.5,3) {$w_4$};
	\node at (-.5,4) {$w_5$};
	\node at(1.5,-1){$2$};
	\node at(2.5,-1){$1$};
	\node at(3.5,-1){$2$};
	\node at(4.5,-1){$3$};
	\node at(5.5,-1){$4$};
	\node at(6.5,-1){$3$};
	\node at(7.5,-1){$2$};
	\node at(8.5,-1){$1$};
	\node at(9.5,-1){$3$};
	\node at(10.5,-1){$2$};
	
	\draw (0,0) --(1,0) --  (2,0) -- (3,1) --  (4,2) -- (5,3) --(6,4) -- (9,4) -- (12,4);
	\draw (0,1) -- (1,1) -- (2,2) -- (3,2) -- (4,1) -- (7,1) -- (8,2) -- (9,2) --(10,3) -- (12,3);
	\draw (0,2) --(1,2) -- (3,0) -- (8,0) -- (9,1) -- (10,1) -- (11,2) -- (12,2) ;
	\draw (0,3) --  (4,3) -- (5,2) -- (6,2) -- (7,3) --(9,3) -- (10,2) -- (11,1) -- (12,1) ;
	\draw (0,4) --  (5,4) -- (6,3) -- (7,2) -- (8,1) -- (9,0) -- (12,0) ;
	\end{tikzpicture}
\end{center}

\end{ex}
The condition $\ii\in R(w_0)$ implies that two lines $p,q$ with $p\ne q$ in $\mathcal{D}_{\ii}$ intersect exactly once. 
\begin{defi} Let $\ii\in R(w_0)$ and $\mathcal{D}_{\ii}$ be the corresponding wiring diagram. For $i\in [1,n-1]$ we denote by $\mathcal{D}_{\ii}(i)$ the oriented graph obtained from $\mathcal{D}_{\ii}$ by orienting its wires $p$ from left to right if $p\le i$, and from right to left if $p>i$. 
\end{defi}

\begin{ex} Let $i=4$ and $\mathcal{D}_{\ii}$ as in Example \ref{run}. The oriented graph $\mathcal{D}_{\ii}(4)$ looks as follows.

 \begin{center}
	
	\begin{tikzpicture}[scale=.8]
		\begin{scope}[decoration={
		markings,
		mark=at position 0.5 with {\arrow{Latex[length=2mm]}}}
	] 
	\node at (-.5,0) {$w_1$};
\node at (-.5,1) {$w_2$};
\node at (-.5,2) {$w_3$};
\node at (-.5,3) {$w_4$};
\node at (-.5,4) {$w_5$};

\draw [postaction={decorate}] (0,0) -- (1,0);
\draw (1,0) --  (2,0) -- (3,1) --  (4,2) -- (5,3) --(6,4);
\draw [postaction={decorate}] (6,4) -- (12,4);
\draw [postaction={decorate}](0,1) -- (1,1);
\draw (1,1) -- (2,2);
\draw [postaction={decorate}] (2,2) -- (3,2);
\draw (3,2) -- (4,1);
\draw [postaction={decorate}] (4,1) -- (7,1);
\draw (7,1) -- (8,2);
\draw [postaction={decorate}] (8,2) -- (9,2);
\draw (9,2) --(10,3);
\draw [postaction={decorate}] (10,3) -- (12,3);
\draw [postaction={decorate}](0,2) --(1,2);
\draw (1,2) -- (3,0);
\draw [postaction={decorate}] (3,0) -- (8,0);
\draw (8,0) -- (9,1);
\draw [postaction={decorate}] (9,1) -- (10,1);
\draw (10,1) -- (11,2);
\draw [postaction={decorate}] (11,2) -- (12,2);
\draw [postaction={decorate}] (0,3) -- (4,3);
\draw (5,2) -- (4,3) ;
\draw [postaction={decorate}] (5,2) -- (6,2);
\draw (6,2) -- (7,3);
\draw [postaction={decorate}] (7,3) -- (9,3) ;
\draw (9,3) -- (10,2) -- (11,1);
\draw [postaction={decorate}] (11,1) -- (12,1);
\draw [postaction={decorate}] (5,4) -- (0,4);
\draw (5,4) -- (6,3) --  (7,2) -- (8,1) -- (9,0);
\draw [postaction={decorate}]  (12,0) -- (9,0) ;

	\end{scope}

	\end{tikzpicture}
\end{center}
\end{ex}

An oriented path in $\mathcal{D}_{\ii}(i)$ is a sequence $(v_1,\ldots,v_k)$ of vertices of $\mathcal{D}_{\ii}$ which are connected by oriented edges $v_1 \rightarrow v_2 \rightarrow \ldots \rightarrow v_{j}$ in $\mathcal{D}_{\ii}(i)$. 

\begin{defi} For $i\in [1,n-1]$ an \emph{$i$-crossing} is an oriented path $\gamma=(v_1,\ldots,v_j)$ in $\mathcal{D}_{\ii}(i)$ which starts with the rightmost vertex of the wire $i+1$ and ends with the rightmost vertex of the wire $i$. We say an $i$-crossing $\gamma$ is a rigorous path if $\gamma$ additionally satisfies the following condition: Whenever $v_j, v_{j+1}, v_{j+2}$ lie on the same wire $p$ in $\mathcal{D}_{\ii}$ and the vertex $v_{j+1}$ lies on the intersection the wires $p$ and $q$, then
	\begin{align*}
	p > q & \quad \text{if }q\le i, \\ 
	p<q & \quad \text{if }i  < p. \end{align*}
	In other words, the path $\gamma$ avoids the following two fragments.
	\begin{center}
		\begin{tikzpicture}[scale=.75]
		\draw[very thick, -{Latex[length=2mm]}] (0,0)node[below left]{p} -- (2,2);
		\draw[-{Latex[length=2mm]}] (0,2)node[above left]{q} -- (2,0);
		\draw[-{Latex[length=2mm]}] (6,2) -- (4,0) node[below left]{q};
		\draw[very thick, -{Latex[length=2mm]}] (6,0) -- (4,2)node[above left]{p};
		\end{tikzpicture}
	\end{center}
\end{defi}

\begin{ex}\label{rigpathex}
	Let $n=5$. The vertices lying on the bold path below form the rigorous path for $i=4$ $\gamma=([3,5],[2,5],[2,4],[3,4])$.

\begin{center}
	
	\begin{tikzpicture}[scale=.8]
\begin{scope}[decoration={
	markings,
	mark=at position 0.5 with {\arrow{Latex[length=2mm]}}}
] 
\node at (-.5,0) {$w_1$};
\node at (-.5,1) {$w_2$};
\node at (-.5,2) {$w_3$};
\node at (-.5,3) {$w_4$};
\node at (-.5,4) {$w_5$};

\draw [postaction={decorate}] (0,0) -- (1,0);
\draw (1,0) --  (2,0) -- (3,1) --  (4,2) -- (5,3) --(6,4);
\draw [postaction={decorate}] (6,4) -- (12,4);
\draw [postaction={decorate}](0,1) -- (1,1);
\draw (1,1) -- (2,2);
\draw [postaction={decorate}] (2,2) -- (3,2);
\draw (3,2) -- (4,1);
\draw [postaction={decorate}] (4,1) -- (7,1);
\draw (7,1) -- (8,2);
\draw [postaction={decorate}] (8,2) -- (9,2);
\draw (9,2) --(10,3);
\draw [postaction={decorate}] (10,3) -- (12,3);
\draw [postaction={decorate}](0,2) --(1,2);
\draw (1,2) -- (3,0);
\draw [postaction={decorate}] (3,0) -- (8,0);
\draw (8,0) -- (9,1);
\draw [postaction={decorate}] (9,1) -- (10,1);
\draw (10,1) -- (11,2);
\draw [postaction={decorate}] (11,2) -- (12,2);
\draw [postaction={decorate}] (0,3) -- (4,3);
\draw (5,2) -- (4,3) ;
\draw [postaction={decorate}] (5,2) -- (6,2);
\draw (6,2) -- (7,3);
\draw [postaction={decorate}] (7,3) -- (9,3) ;
\draw (9,3) -- (10,2) -- (11,1);
\draw [postaction={decorate}] (11,1) -- (12,1);
\draw [postaction={decorate}] (5,4) -- (0,4);
\draw (5,4) -- (6,3) --  (7,2) -- (8,1) -- (9,0);
\draw [postaction={decorate}]  (12,0) -- (9,0) ;
\draw[very thick] (12,0) -- (9,0) -- (8,1) -- (7.5,1.5) -- (8,2) -- (9,2) -- (9.5,2.5) -- (11,1) -- (12,1);

\end{scope}
\end{tikzpicture}
\end{center}

\end{ex}

\begin{defi} To each rigorous path $\gamma=(v_1,\ldots,v_j)$ we associate an integer vector $a_{\gamma}=(a_{[p,q]}^{(\gamma)})_{1\le p < q \le n}\in \mathbb{Z}^N$ (note that $N$ is equal to the number of vertices in $\mathcal{D}_{\ii}$) defined for $p< q$ as
$$a^{(\gamma)}_{[p,q]}=\begin{cases} 1 & \text{ if }\gamma \text{ travels from the wire }p \text{ to the wire }q \text{ at }[p,q], \\
-1 & \text{ if }\gamma \text{ travels from }q \text{ to }p \text{ at }[p,q], \\
0 & \text{else.}
\end{cases}$$
\end{defi}

\begin{ex} For $\gamma$ as in Example \ref{rigpathex}, we have $a^{(\gamma)}_{[2,3]}=0$, $a^{(\gamma)}_{[1,3]}=0$, $a^{(\gamma)}_{[1,2]}=0$, $a^{(\gamma)}_{[1,4]}=0$, $a^{(\gamma)}_{[1,5]}=0$, $a^{(\gamma)}_{[4,5]}=0$, $a^{(\gamma)}_{[2,5]}=-1$, $a^{(\gamma)}_{[3,5]}=0$, $a^{(\gamma)}_{[2,4]}=1$, $a^{(\gamma)}_{[3,4]}=0$.

\end{ex}

By \cite{GP}, we have

\begin{equation}\label{GPinqeu}
    \mathcal{C}_{\ii}=\{x \in \mathbb{R}^N \mid \displaystyle\sum_{j=1}^N a^{(\gamma)}_jx_j \ge 0 \text{ for all rigorous paths }\gamma \text{ in }\mathcal{D}_{\ii}\}.
\end{equation}

There is also an explicit description of $\restr{W}{\mathcal{X}_{\ii}}$ which we recall in the following. Note that the set of chambers of $\mathcal{D}_{\ii}$ except the unbounded chambers on the left hand side is equal to $N$ which gives another set of variables of cardinality $N$ by reading off that chambers from left to right. Here the $j^{th}$ chamber has the $j^{th}$ vertex $v_j$ of $\mathcal{D}_{\ii}$ as the leftmost vertex. Note that the map $\widehat{\text{CA}}_{\ii}$ from Definition \ref{dualCa} has a pictorial definition in terms of $\mathcal{D}_{\ii}$, namely we have

$$u_k=(\widehat{\text{CA}}_{\ii} x)_k=\displaystyle\prod_{j\in [1,N]}x_j^{\{k,j\}},$$
where 
$$\{k,j\}=\begin{cases} -1 & \text{ if }v_j \text{ is the right or leftmost vertex of the }k^{th} \text{ chamber}\\
1  & \text{ if }v_j \text{ is a local minimum or local maximum of the } k^{th} \text{ chamber with respect to the height.}\end{cases}$$

By \cite[Theorem 6.6]{GKS21} and independently by \cite{BF}, we have for every $i\in I$
\begin{equation}\label{typeAGHKKpot}\restr{W_i}{\mathcal{X}_{\ii}}=\displaystyle\sum_\gamma \prod_{\text{chambers }j \text{ enclosed by }\gamma}X_j^{-1},
\end{equation}
where the first sum varies over all rigorous paths in $\mathcal{D}_{\ii}(i)$. 

By the results of this paper we also get a description of the $QP-$quotient modules corresponding to each frozen vertex which is summarizes in the following theorem.
\begin{thm} The set of rigorous path in $\mathcal{D}_{\ii}(i)$ is in bijection with
\begin{itemize}
    \item[(i)] the inequalities of $\mathcal{C}_{\ii}$ arising from $[\varsigma_{\mathbf{i},i}]_{trop}$ defined in Definition \ref{varsigma} via the map
    $$\gamma \mapsto \displaystyle\sum_{j=1}^N a^{(\gamma)}_jx_j \ge 0$$
    \item[(ii)] the set of monomials of $\restr{W_i}{\mathcal{X}_{\ii}}$ via the map
    $$\gamma \mapsto \displaystyle\prod_{\text{chambers }j \text{ enclosed by }\gamma}X_j^{-1}$$,
    \item[(iii)] the isomorphism classes of quotient modules of the projective $(\Gamma_{\ii}^{\ell_i},S(\ii))$-representation ${P}_{(Q_\ii, S_{\ii})}(\ell_i)$  to the vertex $\ell_i$ via the map
    $$\gamma \mapsto \mathcal{M}(\gamma),$$
where $\mathcal{M}(\gamma)$ is a $(\Gamma_{\ii}^{\ell_i},S(\ii))$-representation with a $1$-dimensional vector space at each vertex which corresponds to a chamber enclosed by $\gamma$ and zero dimensional vector space at all other vertices.
\end{itemize}
\end{thm}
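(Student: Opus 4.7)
The strategy is to pass everything through equation \eqref{typeAGHKKpot} expressing $\restr{W_i}{\mathcal{X}_{\ii}}$ as a sum over rigorous paths, and to compare this presentation with the two other formulas for $\restr{W_i}{\mathcal{X}_{\ii}}$ that the paper has already developed: the cluster-algebraic formula via $\varsigma_{\ii,i}\circ\widehat{\text{CA}}_{\ii}^{-1}$ (Proposition \ref{iso}) and the representation-theoretic formula via the dual $F$-polynomial $F^{\vee}_{P(\ell_i)}-1$ (Theorem \ref{potasfpol}). Each of the required bijections (i), (ii), (iii) then matches rigorous paths with one of these three parameterizations of $\restr{W_i}{\mathcal{X}_{\ii}}$.

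First I would handle (ii), which is essentially the content of \eqref{typeAGHKKpot}: by \cite{GKS21, BF} we have $\restr{W_i}{\mathcal{X}_{\ii}}=\sum_{\gamma}\prod_{j\text{ enclosed by }\gamma}X_j^{-1}$ with $\gamma$ ranging over rigorous paths in $\mathcal{D}_{\ii}(i)$, so the map is well-defined and surjects onto monomials. Injectivity reduces to the combinatorial observation that a rigorous path is reconstructible from the set of chambers it encloses, which is a direct check on the wiring diagram.

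For (i), I would tropicalize Proposition \ref{iso} to get $[\varsigma_{\ii,i}]_{\mathrm{trop}}=[\restr{W_i}{\mathcal{X}_{\ii}}]_{\mathrm{trop}}\circ[\widehat{\text{CA}}_{\ii}]_{\mathrm{trop}}$, and then pull each monomial $\prod_{j\text{ enclosed by }\gamma}X_j^{-1}$ back under $\widehat{\text{CA}}_{\ii}$ using the pictorial formula for $\widehat{\text{CA}}_{\ii}$ in terms of chamber corners. A vertex $[p,q]$ of $\mathcal{D}_{\ii}$ contributes $\pm 1$ to the pull-back once for each enclosed chamber at whose boundary it lies; a vertex not traversed by $\gamma$ is a corner of either zero or two enclosed chambers whose contributions cancel, while a vertex actually used by $\gamma$ appears as a corner of exactly one enclosed chamber with the correct sign. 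The resulting linear form equals $\sum_{p<q}a^{(\gamma)}_{[p,q]}x_{[p,q]}$, which combined with the Gleizer-Postnikov equation \eqref{GPinqeu} gives the bijection.

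Finally, for (iii), combining Step 1 with Theorem \ref{potasfpol} gives
\[
F^{\vee}_{P(\ell_i)}(X_1^{-1},\ldots,X_m^{-1})-1=\sum_{\gamma}\prod_{j\text{ enclosed by }\gamma}X_j^{-1}.
\]
The key input is that $P(\ell_i)$ is \emph{thin} in type $A$: each $e_j P(\ell_i)$ has dimension at most one. This would follow from the fact that in $\Gamma_{\ii}^{\ell_i}$ any two paths starting at $\ell_i$ with the same endpoint become equal in $\jacobalg{Q_\ii,S(\ii)}$ via the face relations $\partial_a S(\ii)$, since each such relation equates two parallel length-two paths along a face of $\Gamma_{\ii}$. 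Thinness makes every non-empty $\Gr^{\ee}(P(\ell_i))$ a single point, so $F^{\vee}_{P(\ell_i)}-1$ is square-free with coefficients $1$, and each non-trivial monomial indexes a unique isomorphism class of quotient whose support is exactly the set of chambers enclosed by $\gamma$ (under the standard identification of chambers of $\mathcal{D}_{\ii}$ with vertices of $\Gamma_{\ii}^{\ell_i}$). The main obstacle is the thinness of $P(\ell_i)$; it is a combinatorial statement about the interplay between the wiring diagram and the face structure of the potential $S(\ii)$, but once established the remaining identifications are routine inspection.
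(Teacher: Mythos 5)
Your overall plan is aligned with the paper's, and parts (ii) and (i) land where the paper lands: (ii) really is just equation \eqref{typeAGHKKpot}, and (i) is the Gleizer--Postnikov description \eqref{GPinqeu} combined with the identification of those inequalities with rigorous paths. For (i), though, the paper does not redo the corner-cancellation computation you sketch; it simply cites \cite[Theorem 5.4, Remark 5.5]{GKS21} for the bijection between rigorous paths and the inequalities arising from $[\varsigma_{\ii,i}]_{\mathrm{trop}}$. Your pull-back argument through $\widehat{\text{CA}}_{\ii}$ is a reasonable direct route, but it would need to be carried out carefully; as written it is a sketch.

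The substantive divergence is in (iii), where you flag thinness of $P(\ell_i)$ as ``the main obstacle'' and propose to establish it by showing that the face relations $\partial_a S(\ii)$ identify any two parallel paths out of $\ell_i$ in $\jacobalg{Q_\ii,S_\ii}$. That is a genuine combinatorial claim about the type~$A$ cluster quivers and the Kulkarni potential which you do not prove, so your argument has a gap at exactly the point you identify. The paper sidesteps this entirely with a much shorter deduction: by Theorem \ref{potasfpol}, $\restr{W_i}{\mathcal{X}_\ii}=F^{\vee}_{P(\ell_i)}(X_1^{-1},\dots,X_m^{-1})-1$, and by \eqref{typeAGHKKpot} this Laurent polynomial has only squarefree monomials with coefficient~$1$. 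The monomial of $F^{\vee}_{P(\ell_i)}$ of maximal degree is $\prod_j X_j^{-\dim_{\bbC} e_j P(\ell_i)}$, so squarefreeness of the right-hand side already forces $\dim_{\bbC} e_j P(\ell_i)\le 1$ for all $j$, i.e.\ thinness. No analysis of the Jacobian relations is needed. Once thinness is in hand the rest of (iii) is as you say: each nonempty $\Gr^{\ee}(P(\ell_i))$ is a point, and the correspondence between monomials, enclosed chamber sets, and quotient modules is immediate. I would recommend replacing your proposed combinatorial lemma with this comparison argument, as it both closes the gap and is shorter.
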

\begin{proof}
The bijection $(i)$ follows from \eqref{GPinqeu} together with the fact that this restricts to a bijection between the inequalities arizing from $[\varsigma_{\mathbf{i},i}]_{trop}$ and the rigorous paths in $\mathcal{D}_{\ii}(i)$ which was shown in \cite[Theorem 5.4, Remark 5.5]{GKS21}. Bijection $(ii)$ is \eqref{typeAGHKKpot}. 

By \eqref{typeAGHKKpot} and and Theorem \ref{potasfpol} it is immediate that the projective module ${P}_{(Q_{\ii},S_{\ii})}(\ell_i)$ is thin, i.e. its dimension vector only consists of $1$s and $0s$. Therefore all non-trivial quiver Grassmannians $Gr^{e}({P}_{(Q_{\ii},S_{\ii})}(\ell_i))$ consists of points and we conclude bijection $(iii)$.
\end{proof}

\end{document}